\theoremstyle{plain}
\newtheorem{theorem}{Theorem}[section]
\newtheorem{lemma}[theorem]{Lemma}
\newtheorem{prop}[theorem]{Proposition}
\newtheorem{corollary}[theorem]{Corollary}
\theoremstyle{definition}
\newtheorem{remark}[theorem]{Remark}
\def\rn{\mathbb R\sp n}
\def\R{\mathbb R}
\def\N{\mathbb N}
\def\hra{\to}
\def\M{\mathcal M}
\def\dist{\operatorname{dist}}
\def\limsup{\operatornamewithlimits{lim\,sup}}
\def\ve{\varepsilon}
\def\VV{\mathcal V}
\newtoks\by
\newtoks\paper
\newtoks\book
\newtoks\jour
\newtoks\yr
\newtoks\pages
\newtoks\vol
\newtoks\publ
\newtoks\eds
\newtoks\proc
\def\ota{{\hbox{???}}}
\def\cLear{\by=\ota\paper=\ota\book=\ota\jour=\ota\yr=\ota
\pages=\ota\vol=\ota\publ=\ota}
\def\endpaper{\the\by, \textit{\the\paper},
{\the\jour} \textbf{\the\vol} (\the\yr), \the\pages.\cLear}
\def\endbook{\the\by, \textit{\the\book}, \the\publ.\cLear}
\def\endprep{\the\by, \textit{\the\paper}, \the\jour.\cLear}
\def\endproc{\the\by, \textit{\the\paper}, \the\publ, \the\pages.\cLear}
\def\name#1#2{#1 #2}
\def\et{ and }
\numberwithin{equation}{section}
\begin{document}

\title[Banach algebras of weakly differentiable functions]
{Banach algebras of weakly differentiable functions}

\author {Andrea Cianchi, Lubo\v s Pick and Lenka Slav\'ikov\'a}
\address{Dipartimento di Matematica e Informatica \lq\lq U. Dini"\\
Universit\`a di Firenze\\
Piazza Ghiberti 27\\
50122 Firenze\\
Italy} \email{cianchi@unifi.it}

\address{Department of Mathematical Analysis\\
Faculty of Mathematics and Physics\\
Charles University\\
Sokolovsk\'a~83\\
186~75 Praha~8\\
Czech Republic} \email{pick@karlin.mff.cuni.cz}

\address{Department of Mathematical Analysis\\
Faculty of Mathematics and Physics\\
Charles University\\
Sokolovsk\'a~83\\
186~75 Praha~8\\
Czech Republic} \email{slavikova@karlin.mff.cuni.cz}

\subjclass[2000]{46E35, 46E30} \keywords{Sobolev spaces, Banach
algebras, rearrangement-invariant spaces, isoperimetric
inequalities, Orlicz spaces, Lorentz spaces}

\thanks{This research was partly supported by the Research Project of Italian Ministry of University and
Research (MIUR) ``Elliptic and parabolic partial differential
equations: geometric aspects, related inequalities, and
applications" 2012,  by  GNAMPA  of Italian INdAM (National
Institute of High Mathematics), and by the grant P201-13-14743S of
the Grant Agency of the Czech Republic.}

\begin{abstract}
{The question is addressed of when a Sobolev type space, built upon
a general rearrangement-invariant norm, on an $n$-dimensional
domain, is a Banach algebra under pointwise multiplication of
functions. A sharp balance condition among the order of the Sobolev
space, the strength of the norm, and the (ir)regularity of the
domain is provided for the relevant Sobolev space to be a Banach
algebra. The regularity of the domain is described in terms of its
isoperimetric function. Related results on the boundedness of the
multiplication operator into lower-order Sobolev type spaces are
also established. The special cases of Orlicz-Sobolev and
Lorentz-Sobolev spaces are discussed in detail. New results  for
classical Sobolev spaces on possibly irregular domains follow as
well.}
\end{abstract}

\maketitle

\section{Introduction and  main results}\label{S:intro}
{The Sobolev space $W^{m,p}(\Omega)$ of those functions in an open
set $\Omega \subset \rn$, $n\geq 2$, whose weak derivatives up to
the order $m$ belong to $L^p(\Omega)$, is classically well known to
be a Banach space for every $m \in \N$ and $p \in [1, \infty]$. In
particular, the sum of any two functions from $W^{m,p}(\Omega)$
always still belongs to $W^{m,p}(\Omega)$. The situation is quite
different if the operation of  sum is replaced by product. In fact,
membership of functions to a Sobolev space need not be preserved
under multiplication. Hence, $W^{m,p}(\Omega)$ is not a Banach
algebra in general. A standard result in the theory of Sobolev
spaces tells us that if $\Omega$ is regular, say a bounded domain
with the cone property,  then $W^{m,p}(\Omega)$ is indeed a Banach
algebra if and only if either $p>1$ and $pm>n$, or $p =1$ and $m
\geq n$. Recall that this amounts to the existence of a constant $C$
such that
\begin{equation}\label{classical}
\|uv\|_{W\sp mX(\Omega)} \leq C \|u\|_{W\sp mX(\Omega)}\|v\|_{W\sp
mX(\Omega)}
\end{equation}
for every $u, v\in W\sp mX(\Omega)$. We refer to Section 6.1  of the
monograph \cite{MS}  for this result, where a comprehensive updated
treatment of properties of Sobolev functions under product can be
found. See also \cite[Theorem~5.23]{RAA} for a proof of the
sufficiency part of the result.}
\par
{In the present paper abandon this
classical setting, and address the question of the validity of an
inequality of the form \eqref{classical}  in a much more general
framework. } Assume  that $\Omega$ is just a domain in $\rn$, namely
an~open connected set, with finite
 Lebesgue measure $|\Omega|$, which, without loss of
generality, will be assumed to be equal to $1$. Moreover, suppose
that $L^p(\Omega)$ is replaced with an arbitrary
rearrangement-invariant space $X(\Omega)$,  loosely speaking, a
Banach space of measurable functions endowed with a norm depending
only on the measure of level sets of functions. We refer to the next
section for precise definitions concerning function spaces. Let us
just recall here that, besides Lebesgue spaces, Lorentz and Orlicz
spaces are classical instances of rearrangement-invariant spaces.
\par
Given any $m \in \N$ and any rearrangement-invariant space $X(\Omega)$, consider
the $m$-th order Sobolev type space $\VV ^m X(\Omega)$  built upon
$X(\Omega)$, and defined as the collection of all $m$ times weakly
differentiable functions $u:\Omega\to \R$ such that $|\nabla\sp
mu|\in X(\Omega)$. Here,
$\nabla ^m u$ denotes the vector of all $m$-th order weak
derivatives of $u$, and $|\nabla\sp mu|$ stands for its length.
For notational convenience, we also set $\nabla\sp0 u=u$ and $\VV^0X(\Omega)=X(\Omega)$.
Given any fixed ball $B\subset \Omega$, we define the functional
 $\|\cdot\|_{\VV\sp m X(\Omega)}$ by
\begin{equation}\label{E:norm-in-vdot}
\|u\|_{\VV\sp m X(\Omega)}=\sum _{k=0}^{m-1}\|\nabla ^k u\|_{L^1(B)}
+ \|\nabla ^m u\|_{X(\Omega)}
\end{equation}
for $u\in \VV\sp mX(\Omega)$. Observe that in the definition
of $\VV^m X(\Omega)$ it is only required that the derivatives of the
highest order $m$ of $u$ belong to $X(\Omega )$. This assumption
does not ensure, for an arbitrary domain $\Omega$, that also $u$ and
its derivatives up to the order $m-1$ belong to $X(\Omega )$, or
even to $L^1(\Omega )$. However, owing to a standard Poincar\'e
inequality, if  $u \in \VV^m X(\Omega)$, then $|\nabla ^k u|\in
L^1(B)$ for  $k =0, \dots ,m-1$, for every ball $B \subset \Omega$. It follows that the functional $\|\cdot\|_{\VV\sp m X(\Omega)}$ is a norm on $\VV\sp mX(\Omega)$. Furthermore, a~standard argument shows that $\VV\sp mX(\Omega)$ is a Banach space
equipped with this norm, which results in equivalent norms under
replacements of $B$ with other balls.
\par
We shall exhibit minimal conditions on $m$, $\Omega$ and $\| \cdot
\|_{X(\Omega)}$ for $\VV\sp mX(\Omega)$ to be a~Banach algebra under
pointwise multiplication of functions, namely for an inequality of
the form
\begin{equation*}
\|uv\|_{\VV\sp mX(\Omega)} \leq C \|u\|_{\VV\sp
mX(\Omega)}\|v\|_{\VV\sp mX(\Omega)}
\end{equation*}
to hold for some constant $C$ and every $u, v\in \VV\sp mX(\Omega)$.
Variants of this inequality, where $\VV\sp mX(\Omega)$ is replaced
by a lower-order Sobolev space on the left-hand side, are also dealt
with.
\par
In our discussion,  we neither a priori assume any regularity on
$\Omega$, nor we assume that $X(\Omega)$ is a Lebesgue space (or any
other specific space).  We shall exhibit a balance condition between
the degree of regularity of $\Omega$, the order of differentiation
$m$, and the strength of the norm in $X(\Omega)$ ensuring that
$\VV\sp mX(\Omega)$ be a Banach algebra. The dependence on
$X(\Omega)$ is only through the representation norm $\| \cdot
\|_{X(0,1)}$ of $\| \cdot \|_{X(\Omega)}$. In particular, the
associate norm $\| \cdot \|_{X'(0,1)}$ of $\| \cdot \|_{X(0,1)}$, a
kind of measure theoretic dual norm of $\| \cdot \|_{X(0,1)}$, will
be relevant. \\ {As for our
assumptions on the domain $\Omega$, a key role in their formulation
will be played by the relative isoperimetric inequality. Let us
recall that the discovery of the link between isoperimetric
inequalities
 and Sobolev type inequalities can be traced back to the work of Maz'ya on one
 hand (\cite{Ma1960, Ma1961}), who proved the equivalence of general Sobolev inequalities to either isoperimetric or isocapacitary inequalities,
  and that of Federer and Fleming on the other hand (\cite{FF}) who used the standard isoperimetric inequality by De Giorgi (\cite{DeG})
   to exhibit the best constant in  the Sobolev inequality for $W^{1,1}(\rn)$. The detection of optimal constants in classical Sobolev
   inequalities continued in the contributions \cite{Moser}, \cite{Ta}, \cite{Aubin}, where crucial use of De Giorgi's isoperimetric inequality
   was again made. An~extensive research followed, along diverse directions, on the interplay between isoperimetric and Sobolev
   inequalities. We  just mention  the papers  \cite{AFT,
BCR1, BWW, BH, BLbis, BK, BK1, CK, Cheeger, Ci_ind, Ci1, CFMP1,
CP_gauss, EKP, EFKNT, GG, Gr, HK, HS1, HS2, KM, Kl, Ko, Lio, LPT,
LYZ, M, Swa, Zh} and the monographs \cite{BZ, CDPT, chavel, GG-book,
Heb, Mabook, Saloff}.}
\par Before stating our most general result, let us focus on the
situation when $m$ and $X(\Omega)$ are arbitrary, but $\Omega$ is
still, in a sense, a best possible domain. This is the case when
$\Omega$ is a John domain. Recall that a~bounded open set $\Omega$
in $\rn$ is called a~\textit{John domain} if there exist a constant
$c \in (0,1)$, an $l\in(0,\infty)$ and a point $x_0 \in \Omega$ such that for every $x
\in \Omega$ there exists a rectifiable curve $\varpi : [0, l] \to
\Omega$, parameterized by arclength, such that $\varpi (0)=x$,
$\varpi (l) = x_0$, and
$$
{\dist}\, (\varpi (r) , \partial \Omega ) \geq c r \qquad \hbox{for
$r \in [0, l]$.}
$$
Lipschitz domains, and domains with the cone property are customary
instances of John domains.
\par
When $\Omega$ is any John domain, a necessary and sufficient
condition for  $\VV\sp mX(\Omega)$ to be a Banach algebra  is
provided by the following result.

\begin{theorem}\label{T:john}
Let $m,n\in\N$, $n\geq 2$. Assume that $\Omega$ is a~John
domain in $\R\sp n$. Let $\|\cdot\|_{X(0,1)}$ be a
rearrangement-invariant function norm. Then $\VV\sp{m}X(\Omega)$ is
a~Banach algebra if and only if
\begin{equation}\label{E:john-condition}
\|r\sp{-1+\frac mn}\|_{X'(0,1)}<\infty.
\end{equation}
\end{theorem}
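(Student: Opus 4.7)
The plan is to pivot both implications through the characterization of the embedding $\VV^mX(\Omega)\hookrightarrow L^\infty(\Omega)$. For a John domain, the isoperimetric function satisfies $I_\Omega(s)\simeq s^{1-1/n}$; iterating the sharp first-order rearrangement-invariant Sobolev embedding $m$ times (so that the weight $t^{-1+1/n}$ accumulates to $r^{-1+m/n}$) and invoking the standard associate-space criterion for boundedness into $L^\infty$ yields the equivalence
\[
\|r^{-1+m/n}\|_{X'(0,1)}<\infty\ \Longleftrightarrow\ \VV^mX(\Omega)\hookrightarrow L^\infty(\Omega).
\]
I would first record this equivalence, drawing on the Maz'ya representation formula and duality in the rearrangement-invariant framework, and then use it as the bridge in both directions.

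\emph{Sufficiency.} Assume \eqref{E:john-condition}, so that $\VV^mX(\Omega)\hookrightarrow L^\infty(\Omega)$. For $u,v\in\VV^mX(\Omega)$, expand
\[
\nabla^m(uv)=\sum_{k=0}^m\binom{m}{k}\nabla^ku\otimes\nabla^{m-k}v
\]
and bound each summand in $X(\Omega)$. The end terms $k\in\{0,m\}$ are controlled by $\|u\|_{L^\infty}\|\nabla^mv\|_X+\|v\|_{L^\infty}\|\nabla^mu\|_X\leq C\|u\|_{\VV^mX}\|v\|_{\VV^mX}$ via the $L^\infty$ embedding. For each intermediate $1\leq k\leq m-1$ I would invoke Gagliardo--Nirenberg-type interpolation in the rearrangement-invariant setting: there exist r.i.\ norms $\|\cdot\|_{Y_k(\Omega)}$, obtained from iterated first-order Sobolev embeddings on John domains, for which the Hölder inequality $\|fg\|_X\leq\|f\|_{Y_k}\|g\|_{Y_{m-k}}$ holds together with
\[
\|\nabla^k u\|_{Y_k}\leq C\|u\|_{L^\infty}^{1-k/m}\|\nabla^m u\|_X^{k/m}.
\]
Applying this to both factors and concluding with Young's inequality gives the bound on each intermediate term. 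The lower-order $L^1(B)$ contributions to the norm \eqref{E:norm-in-vdot} are handled analogously through Leibniz on the smooth ball $B$ and the $L^\infty$ embedding.

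\emph{Necessity.} Suppose $\VV^mX(\Omega)$ is a Banach algebra. Iterating the algebra inequality yields $\|u^N\|_{\VV^mX}\leq C^{N-1}\|u\|_{\VV^mX}^N$ for every $N\in\N$. Since $\|f\|_{L^1(B)}\leq\|f\|_{\VV^mX}$ for all $f\in\VV^mX$ by \eqref{E:norm-in-vdot}, this produces
\[
\|u\|_{L^N(B)}=\|u^N\|_{L^1(B)}^{1/N}\leq C^{(N-1)/N}\|u\|_{\VV^mX},
\]
and letting $N\to\infty$ gives $\|u\|_{L^\infty(B)}\leq C\|u\|_{\VV^mX}$. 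Since the $\VV^mX$-norm is independent (up to equivalence) of the reference ball, the same bound holds with $B$ replaced by any ball in $\Omega$; a Whitney-type chaining argument adapted to the John structure then upgrades this to the global estimate $\|u\|_{L^\infty(\Omega)}\leq C\|u\|_{\VV^mX}$. The preliminary equivalence now forces \eqref{E:john-condition}.

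The principal technical obstacle lies in the sufficiency direction: designing the intermediate norms $Y_k$ that are simultaneously reachable by Gagliardo--Nirenberg interpolation between $L^\infty$ and $X$ and whose pairwise products close back into $X$ via Hölder. This requires a careful interplay between the associate-space duality and iterated rearrangement-invariant Sobolev estimates tailored to the isoperimetric profile of John domains. The necessity direction is conceptually cleaner, the only subtle point being the passage from the single-ball $L^\infty$ bound to a genuinely global one, which is where the John hypothesis on $\Omega$ is used.
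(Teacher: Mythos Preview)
Your overall architecture---pivot through the equivalence $\|r^{-1+m/n}\|_{X'(0,1)}<\infty \Leftrightarrow \VV^mX(\Omega)\hookrightarrow L^\infty(\Omega)$, handle the extreme Leibniz terms via the $L^\infty$ embedding, and treat the mixed terms by a H\"older-type inequality in suitable intermediate r.i.\ spaces---is exactly the paper's strategy. Two points deserve comment, one a simplification and one a genuine gap.

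\textbf{Necessity.} Your ball-local argument followed by Whitney chaining is a detour. Because $\Omega$ is John, $I_\Omega(s)\approx s^{1-1/n}$, so $\int_0 ds/I_\Omega(s)<\infty$ and hence $\VV^mX(\Omega)=W^mX(\Omega)$ up to equivalent norms. But $W^mX(\Omega)\hookrightarrow X(\Omega)\hookrightarrow L^1(\Omega)\hookrightarrow L^{1,\infty}(\Omega)$ \emph{globally}, so your iterated-powers argument can be run directly on $\Omega$ (this is the content of the paper's Theorem~\ref{T:home}) and yields $\VV^mX(\Omega)\hookrightarrow L^\infty(\Omega)$ with no chaining. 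Your route would require uniform control of the ball-dependent constants along Whitney chains, which is doable for John domains but unnecessary.

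\textbf{Sufficiency.} Here your proposal has a real gap. You posit r.i.\ norms $Y_k$ satisfying both the H\"older inequality $\|fg\|_X\le\|f\|_{Y_k}\|g\|_{Y_{m-k}}$ and the \emph{multiplicative} Gagliardo--Nirenberg estimate $\|\nabla^k u\|_{Y_k}\le C\|u\|_{L^\infty}^{1-k/m}\|\nabla^m u\|_X^{k/m}$. The latter is not a standard fact for general r.i.\ $X$, and you give no indication of how to produce it. The paper's resolution is to take $Y_k=X^{m/k}$ (the $m/k$-convexification of $X$), for which the generalized H\"older inequality $\|fg\|_X\le\|f\|_{X^{m/k}}\|g\|_{X^{m/(m-k)}}$ is automatic, and then to prove the \emph{additive} embedding $\|\nabla^k u\|_{X^{m/k}}\le C\|u\|_{V^mX}$. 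This embedding is not free: it is derived from the optimal Sobolev range $X_{m-k}$ together with the inclusion $X_{m-k}\hookrightarrow X^{m/k}$, and the latter rests on a pointwise inequality (Lemma~\ref{T:thm1}) of the form $H_I^k g^*(t)\le C\,g^*(t)^{1-k/m}$ valid whenever $g^*\le 1/\psi_I$, combined with the weak-Marcinkiewicz bound $g^*(t)\psi_I(t)\le C\|g\|_X$ that follows from the hypothesis (this is the chain \textup{(vii)}$\Rightarrow$\textup{(ii)}$\Rightarrow$\textup{(iii)} in Lemma~\ref{T:thm2}). The additive form already suffices for the algebra bound, so the stronger multiplicative GN you invoke is both unproven and unneeded. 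What you correctly flag as ``the principal technical obstacle'' is precisely the content of Lemmas~\ref{T:thm1} and~\ref{T:thm2}; without them (or an equivalent device) the sufficiency argument does not close.
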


{As a consequence of
Theorem~\ref{T:john}, and of the characterization of Sobolev
embeddings into $L^\infty(\Omega)$, we have the following
corollary.}

{\begin{corollary}\label{C:embedding-to-linfty-and-algebra-john}
Let $m,n\in\N$, $n\geq 2$. Assume that $\Omega$ is a~John domain in
$\R\sp n$. Let $\|\cdot\|_{X(0,1)}$ be a rearrangement-invariant
function norm. Then the Sobolev space $\VV\sp mX(\Omega)$ is
a~Banach algebra if and only if $\VV\sp mX(\Omega)\to
L\sp{\infty}(\Omega)$.
\end{corollary}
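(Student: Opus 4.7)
The plan is to combine Theorem~\ref{T:john} with the characterization of the Sobolev embedding into $L\sp\infty$ on a John domain. By Theorem~\ref{T:john}, the Banach algebra property for $\VV\sp m X(\Omega)$ is equivalent to the condition $\|r\sp{-1+m/n}\|_{X'(0,1)} < \infty$, so the corollary will follow once I show that, on a John domain, this very condition is also equivalent to $\VV\sp m X(\Omega) \to L\sp\infty(\Omega)$.

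For the sufficiency direction, I would exploit the sharp pointwise rearrangement estimate available on a John domain: there exists $c \in (0,1)$ depending only on $\Omega$ such that
\begin{equation*}
u^*(t) - u^*(c) \leq C \int_t^c s\sp{-1+m/n}\, |\nabla\sp m u|\sp{**}(s)\,ds,\qquad t \in (0,c),
\end{equation*}
for every $u \in \VV\sp m X(\Omega)$. Letting $t \to 0^+$ and applying H\"older's inequality for rearrangement-invariant norms---together with the $L\sp 1(B)$-bound on $u^*(c)$ coming from the Poincar\'e estimate built into the norm $\|\cdot\|_{\VV\sp m X(\Omega)}$---yields
\begin{equation*}
\|u\|_{L\sp\infty(\Omega)} \leq C\bigl(\|u\|_{L\sp 1(B)} + \|r\sp{-1+m/n}\|_{X'(0,1)}\,\|\nabla\sp m u\|_{X(\Omega)}\bigr),
\end{equation*}
which gives the embedding.

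For the necessity direction, I would use a testing argument. Given a nonnegative $g \in X(\Omega)$, construct $u \in \VV\sp m X(\Omega)$ (for instance as a Riesz-type convolution of $g$ supported in a small ball $B \subset \Omega$) such that $|\nabla\sp m u|^*$ is comparable to $g^*$ and $\|u\|_{L\sp\infty(\Omega)} \gtrsim \int_0^1 s\sp{-1+m/n}\, g^*(s)\,ds$. The assumed embedding then gives
\begin{equation*}
\int_0^1 s\sp{-1+m/n}\, g^*(s)\,ds \leq C\, \|g\|_{X(0,1)},
\end{equation*}
and taking the supremum over all such $g$, via the associate norm formula, produces $\|r\sp{-1+m/n}\|_{X'(0,1)} < \infty$.

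The main obstacle is the sharp pointwise rearrangement estimate on John domains. It relies on the fact that a John domain has isoperimetric profile $I_\Omega(s) \approx s\sp{1-1/n}$ comparable to that of a Euclidean ball, and it is precisely this isoperimetric input---highlighted in the introduction as the fundamental bridge between geometry and Sobolev inequalities---that reduces the $n$-dimensional embedding problem to the one-dimensional condition on $r\sp{-1+m/n}$.
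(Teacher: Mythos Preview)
Your overall strategy matches the paper's exactly: reduce, via Theorem~\ref{T:john}, to showing that the condition $\|r^{-1+m/n}\|_{X'(0,1)}<\infty$ is equivalent to the embedding $\VV^m X(\Omega)\to L^\infty(\Omega)$ on a John domain. The paper does not argue this equivalence from scratch; it simply invokes the characterization of Sobolev embeddings into $L^\infty$ established in \cite[Theorem~6.1]{CPS} (see the proof of Theorem~\ref{T:john}, where this citation appears). So your proposal is correct in spirit, and the extra work you sketch is essentially a re-derivation of that cited result.

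Two small technical remarks on your sketch. First, in the sufficiency direction you write the rearrangement estimate with $|\nabla^m u|^{**}$ on the right. If you then apply H\"older in $X\times X'$, you are left with $\||\nabla^m u|^{**}\|_{X(0,1)}$, and this is not controlled by $\||\nabla^m u|\|_{X(\Omega)}$ for every rearrangement-invariant norm (it fails near $L^1$). The sharp estimate coming from the isoperimetric/Hardy machinery in \cite{CPS} is naturally stated with $|\nabla^m u|^{*}$, not $|\nabla^m u|^{**}$, and with that version H\"older gives exactly $\|r^{-1+m/n}\|_{X'(0,1)}\|\nabla^m u\|_{X(\Omega)}$ with no further hypothesis. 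Second, for the necessity direction your Riesz-potential test on a ball is workable, but the paper's own mechanism (used in Proposition~\ref{T:embedding-to-linfty} and Corollary~\ref{C:embedding-to-linfty-and-algebra}) is the explicit one-variable test function \eqref{E:def-u} on the model domain $\Omega_I$; since any John domain contains a ball, an analogous one-variable radial construction on that ball gives the same conclusion more directly than a general Riesz convolution.
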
}

Let us now turn to the general case. Regularity on $\Omega$ will be
imposed in terms of its isoperimetric function $I_{\Omega} : [0, 1]
\to [0, \infty ]$, introduced in \cite{Ma1960}, and
given by
\begin{equation}\label{isopomega}
I_{\Omega} (s) = \inf \left\{P(E,\Omega) : E \subset \Omega, s \leq
|E| \leq \tfrac 12 \right\} \quad \textup{if}\ s \in [0,\tfrac 12],
\end{equation}
and $I_{\Omega} (s) = I_{\Omega} (1 -s)$ if $s \in (\frac 12, 1]$.
Here, $P(E,\Omega)$ denotes the perimeter of a measurable set $E$
relative to $\Omega$, which agrees with $\mathcal H^{n-1}(\Omega
\cap
\partial ^M E)$,
where $\partial ^ME$ denotes the essential boundary of $E$, in the
sense of geometric measure theory, and $\mathcal H^{n-1}$ stands for
$(n-1)$-dimensional Hausdorff measure. The very definition of
$I_{\Omega}$ implies the relative isoperimetric inequality in
$\Omega$, which tells us that
\begin{equation}\label{isop2}
P(E,\Omega) \geq I_{\Omega}(|E|),
\end{equation}
for every measurable set $E \subset \Omega$. In other words, $
I_{\Omega}$ is the largest non-decreasing function in $[0,
\tfrac{1}{2}]$, symmetric about $\tfrac 12$, which renders
\eqref{isop2} true.
\par
The degree of regularity of $\Omega$ can be described in terms of
the rate of decay  of $I_\Omega (s)$ to $0$ as $s \to 0$.
Heuristically speaking, the faster $I_\Omega $ decays to $0$, the
less regular  $\Omega$ is. For instance, the isoperimetric function
$I_\Omega$ of any John domain $\Omega\subset\rn$ is known to satisfy
\begin{equation}\label{Ijohn}
I_\Omega (s) \approx s^{\frac 1{n'}}
\end{equation}
near $0$, where  $n'=\frac n{n-1}$. Here, and in what follows, the
notation $f \approx g$ mans that the real-valued functions $f$ and
$g$ are equivalent, in the sense that there exist positive constants
$c,C$ such that $cf(c \cdot )\leq g(\cdot )\leq Cf(C \cdot)$. Notice
that \eqref{Ijohn} is the best (i.e. slowest) possible decay of
$I_\Omega$, since, if  $\Omega$ is any domain, then
\begin{equation}\label{Igeneral}
\frac{I_\Omega (s)}{s^{\frac 1{n'}}} \leq C \quad \hbox{ for $s \in
(0, 1]$,}
\end{equation}
for some constant $C$  \cite[Proposition~4.1]{CPS}.
\par
What enters in our characterization of Sobolev algebras is, in fact,
just a lower bound for $I_\Omega $. We shall thus work with classes
of domains whose isoperimetric function admits  a lower bound  in
terms of some {non-decreasing function $I: (0, 1)\to (0, \infty)$.
The function $I$ will be continued by continuity at $0$  when
needed.}
 Given any such function $I$, we denote by $\mathcal J_I$
the collection of all domains $\Omega\subset \mathbb R\sp n$ such
that
\begin{equation}\label{isop-ineq}
I_{\Omega} (s) \geq c I(c s) \quad\textup{for }\ s\in(0,\tfrac12],
\end{equation}
for some constant $c>0$. {The assumption that $I(t)>0$ for
$t\in(0,1)$ is consistent with the fact that $I_{\Omega}(t)>0$ for
 $t\in(0,1)$,  owing to the connectedness of $\Omega$   \cite[Lemma
5.2.4]{Mabook}.}
\\ In particular, if $I(s) = s^\alpha$ for $s \in
(0,1)$, for some
  $\alpha \in
[\frac 1{n'},\infty)$,  we denote $\mathcal J_I$ simply by $\mathcal
J_\alpha$, and call it a~\textit{Maz'ya class}. Thus, a domain
$\Omega \in \mathcal J_\alpha$ if there exists a positive constant
$C$ such that
\[
I_\Omega (s) \geq C s^\alpha \quad \hbox{for every $s
\in (0, \frac 12]$.}
\]
Observe that, thanks to \eqref{Ijohn}, any John domain belongs to
the class $\mathcal J_{\frac 1{n'}}$.
\par
{Our most general result about Banach
algebras of Sobolev spaces is stated in the next theorem. Let us
emphasize that, if $\Omega$ is not a regular domain, it brings new
information even in the standard case when $X(\Omega) =
L^p(\Omega)$.}

\begin{theorem}\label{T:algebra-for-W}
Assume that $m,n\in\N$, $n\geq 2$, and let $\|\cdot\|_{X(0,1)}$ be
a~rearrangement-invariant~function norm. Assume that $I$ is a
positive non-decreasing function on $(0,1)$. If $\Omega \in \mathcal
J_I$ and
\begin{equation}\label{E:psi-in-associate}
\left\|\frac{1}{I(t)}\left(\int_0\sp
t\frac{ds}{I(s)}\right)\sp{m-1}\right\|_{X'(0,1)}<\infty,
\end{equation}
then
\begin{equation*}
\VV\sp mX(\Omega) \textup{\quad is a~Banach algebra},
\end{equation*}
{or, equivalently, there exists a constant $C$ such that
\begin{equation}\label{algebra}
\|uv\|_{\VV\sp mX(\Omega)} \leq C \|u\|_{\VV\sp
mX(\Omega)}\|v\|_{\VV\sp mX(\Omega)}
\end{equation}
for every $u, v\in \VV\sp mX(\Omega)$.}
\\
Conversely, if, in addition,
\begin{equation}\label{E:I-divided-by-power-increasing}
\frac{I(t)}{t\sp{\frac1{n'}}}\quad\textit{is equivalent to
a~non-decreasing function on}\ (0,1),
\end{equation}
then~\eqref{E:psi-in-associate} is sharp, in the sense that if
$\VV\sp mX(\Omega)$ is a~Banach algebra for every $\Omega \in
\mathcal J_I$, then~\eqref{E:psi-in-associate} holds.
\end{theorem}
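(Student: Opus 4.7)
The approach is to reduce the algebra inequality \eqref{algebra} to two ingredients: a Sobolev embedding of $\VV^m X(\Omega)$ into $L^\infty(\Omega)$, and multiplicative estimates controlling mixed products of derivatives. Expanding
\[
\nabla^m(uv) = \sum_{k=0}^m \binom{m}{k}\,\nabla^k u \otimes \nabla^{m-k} v,
\]
the estimate of $\|\nabla^m(uv)\|_{X(\Omega)}$ reduces to controlling each term $\|\nabla^k u \cdot \nabla^{m-k} v\|_{X(\Omega)}$ by $\|u\|_{\VV^m X(\Omega)}\|v\|_{\VV^m X(\Omega)}$; the lower-order pieces $\|\nabla^k(uv)\|_{L^1(B)}$ in the norm \eqref{E:norm-in-vdot} are harmless, once an $L^\infty(B)$-bound for $u,v$ is available, thanks to the Poincar\'e inequality on the fixed ball $B$.

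To prove sufficiency, the first step is to deduce from \eqref{E:psi-in-associate} the continuous embedding $\VV^m X(\Omega) \hookrightarrow L^\infty(\Omega)$. On $\mathcal J_I$, the relative isoperimetric inequality \eqref{isop-ineq} translates into a Maz'ya-type pointwise rearrangement bound of the form $u^{**}(s) - u^*(s) \leq C\,s\,(\nabla u)^{**}(s)/I(s)$, which, iterated $m$ times and coupled with H\"older's inequality in the duality pair $(X,X')$, yields
\[
\|u\|_{L^\infty(\Omega)} \leq C\left\|\frac{1}{I(t)}\left(\int_0^t \frac{ds}{I(s)}\right)^{m-1}\right\|_{X'(0,1)}\|\nabla^m u\|_{X(\Omega)} + \text{lower-order terms},
\]
the coefficient being finite precisely by \eqref{E:psi-in-associate}. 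The extreme Leibniz terms $k\in\{0,m\}$ are then immediately bounded by $\|u\|_{L^\infty}\|\nabla^m v\|_X$ and $\|v\|_{L^\infty}\|\nabla^m u\|_X$.

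The main obstacle is the intermediate range $1 \leq k \leq m-1$. Here I would apply the iterated rearrangement estimate of order $m-k$ to $\nabla^k u$, and of order $k$ to $\nabla^{m-k} v$, producing pointwise bounds of the form
\[
(\nabla^k u)^*(s) \leq C\, T_{m-k}(\nabla^m u)^*(s) + \cdots,\qquad T_j f(s) := \int_s^1 \frac{1}{I(t)}\left(\int_s^t\frac{d\sigma}{I(\sigma)}\right)^{j-1} f(t)\, dt,
\]
and an analogous estimate for $\nabla^{m-k} v$. Combining these with the $L^\infty$ bound already available on one of the two factors to absorb the kernel of lower order, and then invoking boundedness of the remaining Hardy-type operator on $X(0,1)$ (which under \eqref{E:psi-in-associate} reduces to the same $L^\infty$-embedding established in the previous step), one obtains the required product estimate, and hence \eqref{algebra}. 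The technical crux is making this absorption work uniformly in $k$ within the general rearrangement-invariant framework, where no direct H\"older factorization of $\|\cdot\|_{X(0,1)}$ is available; this is the step I expect to demand the most care.

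For the necessity, assuming \eqref{E:I-divided-by-power-increasing} and that \eqref{algebra} holds on every $\Omega \in \mathcal J_I$, the plan is first to iterate the algebra inequality to obtain $\|u^n\|_{\VV^m X(\Omega)} \leq C^{n-1}\|u\|_{\VV^m X(\Omega)}^n$; combining with the trivial inclusion $\VV^m X(\Omega) \hookrightarrow L^1(B)$ and extracting $n$-th roots as $n\to\infty$ yields local $L^\infty$ bounds that, via a covering argument based on the connectedness of $\Omega$, upgrade to $\VV^m X(\Omega) \hookrightarrow L^\infty(\Omega)$. Assumption \eqref{E:I-divided-by-power-increasing} guarantees the existence of a model domain $\Omega^* \in \mathcal J_I$ whose isoperimetric function is comparable to $I$ (a Maz'ya mushroom-type construction), and testing the resulting $L^\infty$ embedding on quasi-radial functions concentrated near the cusps of $\Omega^*$ forces \eqref{E:psi-in-associate}.
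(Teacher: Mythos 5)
Your overall architecture (Leibniz expansion, an $L^\infty$ embedding from \eqref{E:psi-in-associate} for the extreme terms $k\in\{0,m\}$, and necessity via a model domain plus the iteration trick $\|u^j\|\leq C^{j-1}\|u\|^j$) matches the paper's strategy, and your necessity sketch is essentially sound: the power-iteration argument is the paper's Theorem~\ref{T:home}, and testing on the domain $\Omega_I$ of Lemma~\ref{auxdiv} with functions of the form \eqref{E:def-u} does force \eqref{E:psi-in-associate}. The genuine gap is in the sufficiency step for the intermediate Leibniz terms $1\leq k\leq m-1$, which you flag as the ``technical crux'' but do not resolve, and the mechanism you propose for it does not work. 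Under \eqref{E:psi-in-associate} only $u$ itself is bounded; for $1\leq k\leq m-1$ neither $|\nabla^k u|$ nor $|\nabla^{m-k}v|$ need belong to $L^\infty(\Omega)$ (already for $m=2$, $k=1$ one must bound $\||\nabla u|\,|\nabla v|\|_{X(\Omega)}$ with both gradients unbounded), so there is no factor on which to ``absorb the kernel of lower order'' by an $L^\infty$ bound, and the remaining Hardy operator is not being asked to map into $L^\infty$ but into an intermediate space that your argument never identifies.

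The paper's resolution is precisely the ``H\"older factorization'' you assert is unavailable: for $p\in(1,\infty)$ the powered norm $\|g\|_{X^p(0,1)}=\|g^p\|_{X(0,1)}^{1/p}$ is again a rearrangement-invariant norm and satisfies $\|fg\|_{X(0,1)}\leq\|f\|_{X^p(0,1)}\|g\|_{X^{p'}(0,1)}$ (inequality \eqref{E:generalized-hoelder}). The missing key idea is the self-improvement Lemma~\ref{T:thm1}: if $g^*(t)\leq 1/\psi_I(t)$ with $\psi_I(t)=\bigl(\int_0^t ds/I(s)\bigr)^m$, then $H^k_I g^*(t)\leq C\,g^*(t)^{1-k/m}$ pointwise. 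Combined with the weak-type estimate encoded in \eqref{E:psi-in-associate}, this yields $H^k_I:X(0,1)\to X^{m/(m-k)}(0,1)$, hence $|\nabla^k u|\in X^{m/(m-k)}(\Omega)$ and $|\nabla^{m-k}v|\in X^{m/k}(\Omega)$, and the generalized H\"older inequality with $p=m/(m-k)$ closes the estimate (this is the chain (vii)$\Rightarrow$(iii)$\Rightarrow$(iv) of Lemma~\ref{T:thm2}, used through Theorem~\ref{C:gradients} and Proposition~\ref{T:sufficiency}). Without this interpolation-type ingredient, or a substitute for it, the sufficiency proof is incomplete at its central point.
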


\begin{remark}\label{R:best-possible-behavior}
Assumption \eqref{E:I-divided-by-power-increasing} is not
restrictive in view of \eqref{Igeneral}, and can just be regarded as
 a qualification of the latter.
\end{remark}

\begin{remark}\label{conv-int-I}
If condition \eqref{E:psi-in-associate} holds for some $X(0, 1)$ and
$m$, then necessarily
$$\int _0\frac{ds}{I(s)} < \infty.$$ This
is obvious for $m\geq 2$, whereas  it follows from~\eqref{l1linf}
below for $m=1$.
\end{remark}

An analogue of Corollary
\ref{C:embedding-to-linfty-and-algebra-john} is provided by the
following statement.

\begin{corollary}\label{C:embedding-to-linfty-and-algebra}
Let $m,n\in\N$, $n\geq 2$, and let $\|\cdot\|_{X(0,1)}$ be
a~rearrangement-invariant~function norm.  Assume that $I$ is a
positive non-decreasing function on $(0,1)$
satisfying~\eqref{E:I-divided-by-power-increasing}. Then the Sobolev
space $\VV\sp mX(\Omega)$ is  a~Banach algebra for all domains
$\Omega$ satisfying~\eqref{isop-ineq} if and only if $\VV\sp
mX(\Omega)\to L\sp{\infty}(\Omega)$ for every $\Omega\in\mathcal
J_I$.
\end{corollary}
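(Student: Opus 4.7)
The plan is to invoke Theorem~\ref{T:algebra-for-W} as a bridge, identifying the $L\sp\infty$-embedding with the same analytical condition~\eqref{E:psi-in-associate} that governs the algebra property. Under the standing hypothesis~\eqref{E:I-divided-by-power-increasing}, Theorem~\ref{T:algebra-for-W} says that $\VV\sp m X(\Omega)$ is a Banach algebra for every $\Omega\in\mathcal J_I$ if and only if~\eqref{E:psi-in-associate} holds. It therefore suffices to prove that, still under~\eqref{E:I-divided-by-power-increasing}, the embedding $\VV\sp m X(\Omega)\to L\sp\infty(\Omega)$ for every $\Omega\in\mathcal J_I$ is equivalent to the very same condition~\eqref{E:psi-in-associate}; combining the two equivalences yields the corollary.

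For the sufficiency half, the starting point would be the Maz'ya-type pointwise rearrangement estimate of the form
\[
u\sp{*}(t)-u\sp{*}(\tfrac{1}{2})\le C\int_t\sp{1/2}\frac{|\nabla u|\sp{*}(s)}{I(s)}\,ds,\qquad t\in(0,\tfrac{1}{2}),
\]
valid for $u\in\VV\sp 1 X(\Omega)$ whenever $\Omega\in\mathcal J_I$. Iterating this bound $m$ times and interchanging the order of integration by Fubini reduces $\|u\|_{L\sp\infty(\Omega)}$, modulo a lower-order term controlled by $\|u\|_{L\sp 1(B)}$, to the single integral
\[
C\int_0\sp{1/2}\frac{|\nabla\sp m u|\sp{*}(t)}{I(t)}\left(\int_0\sp t\frac{ds}{I(s)}\right)\sp{m-1}dt.
\]
The H\"older inequality in its associate-norm form then dominates this quantity by the product of $\|\nabla\sp m u\|_{X(\Omega)}$ with the norm appearing in~\eqref{E:psi-in-associate}, which is finite by assumption; the $L\sp\infty$-embedding follows.

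The harder direction is the necessity, to be handled contrapositively. If~\eqref{E:psi-in-associate} fails, duality between $X(0,1)$ and $X'(0,1)$ produces a nonnegative non-increasing profile $f$ on $(0,1)$, bounded in $X(0,1)$, for which the associated iterated Hardy integral diverges. One then has to exhibit a concrete domain $\Omega\in\mathcal J_I$ whose isoperimetric profile is essentially $I$, together with a function $u\in\VV\sp m X(\Omega)$ whose $m$-th order rearrangement mimics $f$ along the isoperimetric level sets and whose pointwise supremum is infinite; such a witness violates the $L\sp\infty$-embedding. The main obstacle lies precisely in this construction, which must realize sharpness both in the isoperimetric reduction to one dimension and in the resulting one-dimensional Hardy inequality. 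Fortunately, the same construction drives the converse part of Theorem~\ref{T:algebra-for-W}, so one may invoke that test function (or minor adaptations of it) to produce the required counterexample, thereby closing the equivalence and with it the corollary.
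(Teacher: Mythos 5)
Your proposal is correct and follows essentially the same route as the paper: the paper likewise reduces both implications to the condition \eqref{E:psi-in-associate}, obtaining the embedding from that condition via the optimal higher-order Sobolev embedding of \cite{CPS} (your iterated Maz'ya estimate plus H\"older), and obtaining the necessity of \eqref{E:psi-in-associate} by testing on the extremal domain $\Omega_I$ of Lemma~\ref{auxdiv} with the iterated-Hardy test function \eqref{E:def-u}, exactly the witness you propose to borrow from the converse part of Theorem~\ref{T:algebra-for-W}. The only cosmetic difference is that the paper packages the direction ``algebra $\Rightarrow$ embedding'' directly into Proposition~\ref{T:embedding-to-linfty} rather than passing through \eqref{E:psi-in-associate} twice.
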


The next corollary of Theorem \ref{T:algebra-for-W}  tells us that
$\VV\sp mX(\Omega)$ is always a Banach algebra, whatever $X(\Omega)$
is, provided that $I_\Omega$ is sufficiently well behaved near $0$,
depending on $m$.

\begin{corollary}\label{C:linfty}
Let $m,n\in\N$, $n\geq 2$, and let  $\Omega \in \mathcal J_I$ for
some positive  non-decreasing function $I$ on $(0,1)$. Suppose
that
\begin{equation}\label{linfty1}
\limsup_{t\to0_+}\frac{1}{I(t)}\left(\int_0\sp
t\frac{dr}{I(r)}\right)\sp{m-1}<\infty.
\end{equation}
Then the Sobolev space $\VV\sp mX(\Omega)$ is a~Banach algebra for every rearrangement-invariant space
$X(\Omega)$.
\end{corollary}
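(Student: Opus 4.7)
The plan is to deduce Corollary \ref{C:linfty} directly from Theorem \ref{T:algebra-for-W} by showing that, under hypothesis \eqref{linfty1}, condition \eqref{E:psi-in-associate} is automatically satisfied for every rearrangement-invariant function norm $\|\cdot\|_{X(0,1)}$. The key observation is that \eqref{E:psi-in-associate} becomes essentially trivial once we establish that the function
\[
\varphi(t):=\frac{1}{I(t)}\left(\int_0^t \frac{ds}{I(s)}\right)^{m-1}
\]
is bounded on $(0,1)$, because any rearrangement-invariant norm on a space of unit measure embeds $L^\infty(0,1)$ continuously into $X'(0,1)$: indeed, a defining axiom of a rearrangement-invariant function norm gives $\|\chi_{(0,1)}\|_{X'(0,1)}<\infty$, and the lattice property then yields $\|\varphi\|_{X'(0,1)}\leq \|\varphi\|_{L^\infty(0,1)}\|\chi_{(0,1)}\|_{X'(0,1)}$.

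The steps I would carry out are therefore as follows. First, I would use \eqref{linfty1} to produce some $t_0\in(0,\tfrac12)$ and a constant $M$ with $\varphi(t)\leq M$ for all $t\in(0,t_0)$. For $m\geq 2$ this forces $\int_0 ds/I(s)<\infty$ (otherwise the numerator diverges while $I(t)$ stays bounded), while for $m=1$ it simply says that $I$ is bounded away from $0$ near the origin. Second, I would bound $\varphi$ on $[t_0,1)$: since $I$ is positive and non-decreasing, $1/I(t)\leq 1/I(t_0)$ on this interval; and the factor $\bigl(\int_0^t ds/I(s)\bigr)^{m-1}$ is either identically $1$ (if $m=1$) or dominated by $\bigl(\int_0^1 ds/I(s)\bigr)^{m-1}<\infty$ (if $m\geq 2$), by the previous step. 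Combining the two estimates gives $\varphi\in L^\infty(0,1)$, hence $\varphi\in X'(0,1)$ by the embedding recalled above.

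Once $\|\varphi\|_{X'(0,1)}<\infty$ is established, condition \eqref{E:psi-in-associate} holds for the given $I$, $m$, and $X(\Omega)$, and since $\Omega\in\mathcal J_I$ by assumption, the sufficiency part of Theorem \ref{T:algebra-for-W} immediately yields that $\mathcal V^m X(\Omega)$ is a Banach algebra, proving the corollary.

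There is no substantial obstacle here: the whole content of the argument is the observation that \eqref{linfty1} makes the function appearing in \eqref{E:psi-in-associate} bounded on $(0,1)$, and that $L^\infty$ sits inside every rearrangement-invariant associate space on a set of finite measure. The only mild point requiring care is the separate treatment of the cases $m=1$ and $m\geq 2$ when controlling $\int_0^t ds/I(s)$, and the handling of the behavior of $\varphi$ away from the origin where \eqref{linfty1} provides no direct information.
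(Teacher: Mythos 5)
Your proposal is correct and follows essentially the same route as the paper: the paper's own (two-line) proof likewise observes that \eqref{linfty1} together with the embedding $L^\infty(0,1)\to X'(0,1)$ (a consequence of \eqref{l1linf} applied to the associate norm) makes \eqref{E:psi-in-associate} hold for every rearrangement-invariant norm, and then invokes the sufficiency part of Theorem \ref{T:algebra-for-W}. You merely fill in the routine details (boundedness of the relevant function near $0$ via the limsup hypothesis, and away from $0$ via the monotonicity of $I$ and the finiteness of $\int_0 ds/I(s)$), all of which are accurate.
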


\begin{remark}\label{VVW}
{\rm Besides  $\VV^m X(\Omega)$, one can consider the $m$-th order
Sobolev type space $V^m X(\Omega)$ of those functions $u$ such that
the norm
\begin{equation}\label{E:norm-in-vmx}
\|u\|_{V^m X(\Omega)} = \sum _{k=0}^{m-1}\|\nabla ^k
u\|_{L^1(\Omega)} + \|\nabla ^m u\|_{X(\Omega)}
\end{equation}
is finite, and the space  $W^m X(\Omega)$ of those functions whose
norm
$$
\|u\|_{W^m X(\Omega)} = \sum _{k=0}^{m} \|\nabla ^k u\|_{X(\Omega)}
$$
is finite. Both $V^m X(\Omega)$ and $W^m X(\Omega)$ are Banach
spaces. Since any rearrangement-invariant space is embedded into $L^1(\Omega)$, one has that
\begin{equation}\label{inclusions}
W^m X(\Omega) \to V^m X(\Omega) \to \VV^m X(\Omega),
\end{equation}
and the inclusions are strict, as noticed above, unless $\Omega$
satisfies some additional regularity assumption. In particular, if
\begin{equation}\label{E:2}
\inf_{t\in(0,1)}\frac{I_{\Omega}(t)}{t}>0,
\end{equation}
then
\begin{equation}\label{inclusions1}
V^m X(\Omega) = \VV^m X(\Omega),
\end{equation}
with equivalent norms, as a consequence of \cite[Theorem
5.2.3]{Mabook} and of the closed graph theorem.
\\ If assumption \eqref{E:2} is strengthened to
\begin{equation}\label{E:convergence-of-I-omega}
\int _0  \frac {ds}{I_{\Omega}(s)} < \infty,
\end{equation}
then, in fact,
\begin{equation}\label{inclusions2}
W^m X(\Omega) = V^m X(\Omega) = \VV^m X(\Omega),
\end{equation}
up to equivalent norms \cite[Proposition~4.5]{CPS}.
\\ If $\Omega \in \mathcal J_I$ for some $I$ fulfilling
\eqref{E:psi-in-associate}, then, by Remark \ref{conv-int-I},
condition \eqref{E:convergence-of-I-omega} is certainly satisfied.
 Thus, by the first part of
Theorem \ref{T:algebra-for-W}, assumption \eqref{E:psi-in-associate}
is sufficient also for $V^m X(\Omega)$ and $W^m X(\Omega)$ to be
Banach algebras. Under \eqref{E:I-divided-by-power-increasing}, such
assumption is also necessary, provided that the function $I$ is a
priori known to fulfil either
\begin{equation}\label{E:2I}
\inf_{t\in(0,1)}\frac{I(t)}{t}>0,
\end{equation}
or
\begin{equation}\label{E:convergence-of-I-omegaI}
\int _0  \frac {ds}{I(s)} < \infty,
\end{equation}
according to whether $V^m X(\Omega)$ or $W^m X(\Omega)$ is in
question. Let us emphasize that these additional assumptions cannot
be dispensed with in general. For instance, it is easily seen that $W^m
L^\infty(\Omega)$ is always a Banach algebra, whatever $m$ and
$\Omega$ are.}

\end{remark}

\medskip

{So far we have analyzed the question
of whether $\VV^mX(\Omega)$ is a Banach algebra, namely of the
validity of inequality \eqref{algebra}. We now focus on inequalities
in the spirit of \eqref{algebra}, where the space $\VV^mX(\Omega)$
is replaced with a lower-order Sobolev space $\VV^{m-k}X(\Omega)$ on
the left-hand side.}
The statement of our result in this connection requires the notion
of the fundamental function $\varphi_X : [0,1] \to [0, \infty)$ of a
rearrangement-invariant
 function norm $\|\cdot\|_{X(0,1)}$. Recall that
\begin{equation}\label{fund}
\varphi_X(t)=\|\chi_{(0,t)}\|_{X(0,1)} \quad \hbox{for $t \in (0,
1]$.}
\end{equation}

\begin{theorem}\label{T:algebra-restricted}
Let $m$, $n$, $k\in \mathbb N$, $n\geq 2$, $1 \leq k \leq m$, and
let $\|\cdot\|_{X(0,1)}$ be a~rearrangement-invariant function norm. Assume
that $I$ is a positive non-decreasing function on $(0,1)$. If
$\Omega \in \mathcal J_I$ and
\begin{equation}\label{E:power-m+k}
\sup _{t \in (0,1)}\frac 1{\varphi _X(t)}\left(\int_0^t
\frac{\,ds}{I(s)}\right)^{m+k}< \infty,
\end{equation}
then
{\begin{equation}\label{E:P_bounded}
\|uv\|_{\VV^{m-k}X(\Omega)} \leq C \|u\|_{\VV\sp
mX(\Omega)}\|v\|_{\VV\sp mX(\Omega)}
\end{equation}
for every $u, v\in \VV\sp mX(\Omega)$.}

Conversely, if, in addition, \eqref{E:I-divided-by-power-increasing}
is fulfilled, then~\eqref{E:power-m+k} is sharp, in the sense that
if~\eqref{E:P_bounded} is satisfied for all domains $\Omega \in
\mathcal J_I$, then~\eqref{E:power-m+k} holds.
\end{theorem}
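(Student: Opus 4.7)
The plan for the sufficiency direction is to expand $\nabla^{m-k}(uv)$ via the Leibniz rule and to control each resulting product in $X(\Omega)$, together with the lower-order terms $\|\nabla^j(uv)\|_{L^1(B)}$ appearing in $\|\cdot\|_{\VV^{m-k}X(\Omega)}$. Set $H(t)=\int_0^t ds/I(s)$, so that assumption \eqref{E:power-m+k} reads $\varphi_X(t)\gtrsim H(t)^{m+k}$ on $(0,1)$. The backbone is a family of intermediate Sobolev embeddings on $\mathcal J_I$-domains: for each $j\in\{0,\dots,m\}$ one has $\VV^{j}X(\Omega)\hookrightarrow Y_j(\Omega)$ into a rearrangement-invariant target whose fundamental function behaves like $\varphi_X(t)/H(t)^{j}$. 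These embeddings are extracted from the standard Maz'ya-type pointwise rearrangement estimate bounding $u^{**}(t)$ by an iterated integral of $(|\nabla^{j}u|)^{*}$ weighted by powers of $1/I$; a tool one expects to be available from earlier sections of the paper.

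By the Leibniz rule,
\[
\|\nabla^{m-k}(uv)\|_{X(\Omega)}\leq C\sum_{i=0}^{m-k}\bigl\||\nabla^{i}u|\,|\nabla^{m-k-i}v|\bigr\|_{X(\Omega)}.
\]
For each $i$, I would apply $\nabla^{i}u\in\VV^{m-i}X(\Omega)\hookrightarrow Y_{m-i}(\Omega)$ and $\nabla^{m-k-i}v\in\VV^{k+i}X(\Omega)\hookrightarrow Y_{k+i}(\Omega)$, followed by a Hölder-type inequality $\|fg\|_{X}\leq C\|f\|_{Y_{m-i}}\|g\|_{Y_{k+i}}$. The required compatibility $\varphi_{Y_{m-i}}(t)\,\varphi_{Y_{k+i}}(t)\gtrsim\varphi_X(t)$ unfolds to
\[
\frac{\varphi_X(t)^{2}}{H(t)^{(m-i)+(k+i)}}=\frac{\varphi_X(t)^{2}}{H(t)^{m+k}}\gtrsim\varphi_X(t),
\]
which is precisely \eqref{E:power-m+k}. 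The lower-order terms $\|\nabla^{j}(uv)\|_{L^{1}(B)}$ for $j<m-k$ are handled analogously, with Hölder's inequality on the fixed ball $B\subset\Omega$ in place of the global r.i.\ Hölder inequality.

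For the necessity direction, I assume \eqref{E:P_bounded} holds on every $\Omega\in\mathcal J_I$ together with \eqref{E:I-divided-by-power-increasing}. The plan is to produce a distinguished $\Omega_{*}\in\mathcal J_I$ with $I_{\Omega_{*}}(s)\approx I(s)$ supporting an explicit extremal test pair. Such domains are furnished by the standard construction of axially symmetric cusp/mushroom-type domains whose isoperimetric profile can be prescribed (cf.\ \cite{CPS}). On $\Omega_{*}$ I would build $u=v\in\VV^{m}X(\Omega_{*})$ depending only on an isoperimetric variable $t\in(0,1)$, with $(|\nabla^{m}u|)^{*}$ concentrated near $0$; inserting this pair into \eqref{E:P_bounded} reduces matters to a one-dimensional inequality involving $\varphi_X$, $H$, and the exponent $m+k$, from which \eqref{E:power-m+k} is recovered after optimization over the profile.

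The main obstacle I anticipate is the precise specification of the intermediate targets $Y_j$ and of the attendant r.i.\ Hölder-type inequalities: in general, compatibility of a product with an r.i.\ norm cannot be captured by fundamental functions alone, so one typically has to take $Y_j$ to be the Marcinkiewicz endpoint space determined by $\varphi_X/H^{j}$ and verify the product estimate directly via a combined rearrangement argument rather than by invoking an abstract Hölder inequality. On the necessity side, the delicate point is tuning the profile of the extremal $u$ so that \eqref{E:P_bounded} forces \eqref{E:power-m+k} without slack; this typically requires a family of test functions indexed by the scale $t$ at which the supremum in \eqref{E:power-m+k} is evaluated.
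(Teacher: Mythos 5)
Your architecture for the sufficiency part (Leibniz rule, intermediate embeddings for each factor, a H\"older-type product estimate whose compatibility condition on fundamental functions is exactly \eqref{E:power-m+k}) matches the paper's, and you correctly flag the weak point: fundamental functions alone cannot justify the product estimate $\|fg\|_{X}\le C\|f\|_{Y_{m-i}}\|g\|_{Y_{k+i}}$. But this is not a side issue to be patched later; it is the core of the proof, and the route you propose for patching it (take $Y_j$ to be the Marcinkiewicz space with fundamental function $\varphi_X/H^{j}$ and argue by rearrangements) does not work: the pointwise bound $(fg)^*(2t)\le f^*(t)g^*(t)\lesssim \varphi_{Y_{m-i}}(t)^{-1}\varphi_{Y_{k+i}}(t)^{-1}\lesssim\varphi_X(t)^{-1}$ only places $fg$ in the Marcinkiewicz space $M_{\varphi_X}$, which \emph{contains} $X$ rather than being contained in it, so no bound on $\|fg\|_{X(\Omega)}$ follows. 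The paper's resolution is different and is the real content of Section~\ref{S:pointwise-inequalities}: the intermediate targets are the power norms $X^{p}(0,1)$ with $\|g\|_{X^p}=\|g^p\|_{X}^{1/p}$, for which the H\"older inequality \eqref{E:generalized-hoelder} is automatic, and all the difficulty is moved into showing that the optimal Sobolev target $X_k(0,1)$ of $H_I^k$ embeds into $X^{m/(m-k)}(0,1)$ under the fundamental-function hypothesis. That embedding rests on the pointwise inequality of Lemma~\ref{T:thm1} (if $g^*\le 1/\psi_I$ then $H_I^k g^*\le C (g^*)^{1-k/m}$), which has no counterpart in your sketch.

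The necessity half has the same character. Your choice of a model domain with prescribed isoperimetric profile and of test functions depending on the axial variable is exactly what the paper does (Lemmas~\ref{auxdiv} and~\ref{R:remark}, and the functions \eqref{E:def-u}--\eqref{E:def-v}); inserting them into \eqref{E:P_bounded} and keeping a single Leibniz term yields $\|H_I^m f\, H_I^k g\|_{X(0,1)}\le C\|f\|_{X(0,1)}\|g\|_{X(0,1)}$. But passing from this two-operator inequality to \eqref{E:power-m+k} is not a matter of ``optimization over the profile'': it is the implication (vi)$\Rightarrow$(vii) of Lemma~\ref{T:thm2}, proved by a self-improving iteration with $f,g$ chosen as powers of $\int_t^a dr/I(r)$, an induction producing exponents $b_j\to\infty$, and a limiting argument identifying an $L^\infty$ norm. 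Both halves of your proposal therefore stop exactly where the genuine work begins.
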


\begin{remark}\label{VVWrestrict}
Considerations on the replacement of the space $\VV\sp mX(\Omega)$
with either the space $V\sp mX(\Omega)$, or $W\sp mX(\Omega)$ in
Theorem \ref{T:algebra-restricted} can be made, which are analogous
to those of Remark \ref{VVW} about Theorem \ref{T:algebra-for-W}.
\end{remark}

{In the borderline case when $k=0$,
condition \eqref{E:power-m+k} is (essentially) weaker than
\eqref{E:psi-in-associate} - see Proposition
\ref{R:psi-implies-fundamental} and Remark \ref{stronger}, Section
\ref{S:pointwise-inequalities}. The next result asserts that it
``almost" implies inequality \eqref{algebra}, in that it yields
an inequality of that form, with the borderline terms $u\nabla ^mv$
and $v \nabla ^m u$ missing in the Leibniz formula for the $m$-th
order derivative of the product $uv$.}

{\begin{theorem}\label{C:gradients} Let
$m,n\in\N$, $m,n\geq 2$, and let $\|\cdot\|_{X(0,1)}$ be
a~rearrangement-invariant function norm. Assume that $I$ is a
positive non-decreasing function on $(0,1)$. If $\Omega \in \mathcal
J_I$ and
\begin{equation}\label{E:fundamental-estimate-first}
\sup_{t\in(0,1)}\frac1{\varphi_X(t)}\left(\int_0\sp
t\frac{ds}{I(s)}\right)\sp m<\infty\,,
\end{equation}
then there exists a~positive constant $C$ such that
\begin{equation}\label{E:corollary-lower}
\sum_{k=1}\sp{m-1}\| |\nabla\sp ku| |\nabla\sp{m-k}v|
\|_{X(\Omega)}\leq C\|u\|_{\VV\sp mX(\Omega)}\|v\|_{\VV\sp
mX(\Omega)}
\end{equation}
for every $u,v\in\VV\sp mX(\Omega)$.
\\ {Conversely, if, in addition,
\eqref{E:I-divided-by-power-increasing} is fulfilled, then~
\eqref{E:fundamental-estimate-first} is sharp, in the sense that
if~\eqref{E:corollary-lower} is satisfied for all domains $\Omega
\in \mathcal J_I$, then~\eqref{E:fundamental-estimate-first} holds.}
\end{theorem}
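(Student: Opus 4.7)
The plan is to prove the two directions separately, both relying on the Sobolev--Poincar\'e machinery for domains in $\mathcal J_I$ that is developed earlier in the paper.

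For sufficiency, I would start from a pointwise/rearrangement estimate for intermediate-order derivatives: for $u\in\VV\sp mX(\Omega)$ with $\Omega\in\mathcal J_I$ and $1\leq k\leq m-1$, one expects a bound of the form
\[
(\nabla\sp k u)\sp*(t)\leq C\left(\|u\|_{\VV\sp mX(\Omega)}+\int_t\sp 1(\nabla\sp m u)\sp*(s)\left(\int_t\sp s\frac{dr}{I(r)}\right)\sp{m-k-1}\frac{ds}{I(s)}\right),
\]
obtained by iterating the basic $I$-isoperimetric Sobolev--Poincar\'e inequality $m-k$ times. Hypothesis~\eqref{E:fundamental-estimate-first} is designed precisely so that, after a Hardy-type manipulation, $(\nabla\sp ku)\sp*(t)$ is controlled by $\omega_{m-k}(t)\|u\|_{\VV\sp mX(\Omega)}$, where $\omega_{m-k}(t)$ behaves like $\left(\int_0\sp t ds/I(s)\right)\sp{m-k}$. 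Multiplying this estimate by its analogue for $|\nabla\sp{m-k}v|$ yields a rearrangement bound for $|\nabla\sp ku||\nabla\sp{m-k}v|$ involving the weight $\omega_k\omega_{m-k}$, which, irrespective of how $m$ is split, satisfies $\omega_k\omega_{m-k}\lesssim\left(\int_0\sp t ds/I(s)\right)\sp m$. Combining this with \eqref{E:fundamental-estimate-first} and the definition of the fundamental function $\varphi_X$, a standard H\"older-type argument in $X(0,1)$ produces
\[
\bigl\||\nabla\sp ku||\nabla\sp{m-k}v|\bigr\|_{X(\Omega)}\leq C\|u\|_{\VV\sp mX(\Omega)}\|v\|_{\VV\sp mX(\Omega)},
\]
and summing over $k=1,\dots,m-1$ gives~\eqref{E:corollary-lower}.

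For necessity, working under \eqref{E:I-divided-by-power-increasing}, I would argue by an extremal construction. A standard Maz'ya-type mushroom or cusp construction, calibrated to the profile $I$, produces a concrete domain $\Omega_I\in\mathcal J_I$ together with a one-parameter family of radial-like test functions $u_\lambda\in\VV\sp mX(\Omega_I)$ whose derivatives saturate, at each intermediate order, the iterated Sobolev--Poincar\'e inequality used in the direct part. Inserting $u=v=u_\lambda$ and $k=1$ into~\eqref{E:corollary-lower} and passing to rearrangements reduces the inequality to the one-dimensional statement
\[
\left\|\chi_{(0,t)}\left(\int_0\sp{\cdot}\frac{ds}{I(s)}\right)\sp m\right\|_{X(0,1)}\leq C\,\varphi_X(t)
\]
for all $t\in(0,1)$, which, in view of the monotonicity of the integral and the definition of $\varphi_X$, is equivalent to~\eqref{E:fundamental-estimate-first}. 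Assumption~\eqref{E:I-divided-by-power-increasing} is used both to ensure that the cusp construction actually belongs to $\mathcal J_I$ and to justify the monotonicity of $u_\lambda$ required in the rearrangement reduction.

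The main obstacle will be the sufficiency step for intermediate $k\in\{2,\dots,m-2\}$: the endpoints $k=1$ and $k=m-1$ reduce essentially to a single first-order Sobolev--Poincar\'e bound combined with the embedding $\VV\sp mX(\Omega)\to L\sp\infty(\Omega)$ that is available under \eqref{E:fundamental-estimate-first}, whereas the intermediate cases require distributing derivatives carefully between the two factors and recognizing that the product weight $\omega_k\omega_{m-k}$ is, uniformly in $k$, dominated by $\omega_m$. The delicate point is to make this balancing quantitative within the rearrangement-invariant norm $\|\cdot\|_{X(\Omega)}$ without losing the exponent $m$ in~\eqref{E:fundamental-estimate-first}, and this is what fixes the exact shape of the condition.
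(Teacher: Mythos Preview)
Your sufficiency sketch has two genuine gaps.

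First, the claimed pointwise bound $(\nabla^{k}u)^{*}(t)\lesssim \omega_{m-k}(t)\|u\|_{\VV^{m}X(\Omega)}$ with $\omega_{m-k}(t)=\left(\int_0^t ds/I(s)\right)^{m-k}$ cannot be correct as written: the left-hand side is non-increasing in $t$ while the right-hand side tends to $0$ as $t\to 0^{+}$. What condition~\eqref{E:fundamental-estimate-first} actually yields (this is (vii)$\Rightarrow$(ii) of Lemma~\ref{T:thm2}) is the \emph{reciprocal} bound $(\nabla^{m}u)^{*}(t)\lesssim \psi_I(t)^{-1}\|u\|$ on the top-order derivative, and then Lemma~\ref{T:thm1} converts this into $H_I^{m-k}(\nabla^{m}u)^{*}(t)\lesssim \big((\nabla^{m}u)^{*}(t)\big)^{k/m}\|u\|^{(m-k)/m}$. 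This is not a bound of the form ``weight times norm'', so your plan of multiplying two such bounds and summing the weights does not go through. The paper instead passes through the power spaces $X^{p}$: condition~\eqref{E:fundamental-estimate-first} is (vii) of Lemma~\ref{T:thm2}, whose implication (vii)$\Rightarrow$(iii) gives $\|g\|_{X^{m/(m-k)}}\leq C\|g\|_{X_k}$, and combining this with the optimal Sobolev embedding $V^{m-j}X(\Omega)\to X_j(\Omega)$ and the generalized H\"older inequality $\|fg\|_{X}\leq \|f\|_{X^{p}}\|g\|_{X^{p'}}$ produces the product estimate. The $X^{p}$/H\"older mechanism is the missing idea in your outline.

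Second, you invoke the embedding $\VV^{m}X(\Omega)\to L^{\infty}(\Omega)$ as being ``available under \eqref{E:fundamental-estimate-first}''. This is false: that embedding is equivalent to the strictly stronger condition~\eqref{E:psi-in-associate} (Corollary~\ref{C:embedding-to-linfty-and-algebra} together with Remark~\ref{stronger}), so you cannot use it here, even for the ``easy'' endpoint terms $k=1$ and $k=m-1$. Those terms must be treated by the same $X^{p}$ argument as the intermediate ones.

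Your necessity sketch is in the right spirit (the paper also uses the cusp domain $\Omega_I$ and iterated-integral test functions as in \eqref{E:def-u}--\eqref{E:def-v}), but the reduction you state is not the one that arises: plugging in general $f,g$ rather than a single family $u_\lambda=v_\lambda$ yields $\|H_I^{k}f\cdot H_I^{m-k}g\|_{X}\leq C\|f\|_{X}\|g\|_{X}$, and one must then invoke the nontrivial implication (vi)$\Rightarrow$(vii) of Lemma~\ref{T:thm2} --- an iterative bootstrapping argument --- to recover~\eqref{E:fundamental-estimate-first}. Your one-line reduction to $\|\chi_{(0,t)}(\int_0^{\cdot}ds/I(s))^{m}\|_{X}\leq C\varphi_X(t)$ skips this step.
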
}

Theorem \ref{T:algebra-for-W} enables us, for instance, to
characterize
 the
Lorentz--Zygmund-Sobolev spaces and Orlicz-Sobolev spaces which are
Banach algebras for all domains $\Omega\in \mathcal J_{\alpha}$.
\\ Let us first focus on the Lorentz--Zygmund-Sobolev spaces
$\VV\sp{m}L\sp{p,q;\beta}(\Omega)$. Recall (see e.g.~\cite[Theorem~9.10.4]{fs-1} or~\cite[Theorem~7.4]{glz}) that a necessary and
sufficient condition for $L\sp{p,q;\beta}(\Omega)$ to be a
rearrangement-invariant space is that the parameters $p, q, \beta$
satisfy either of the following conditions:
\begin{equation}\label{E:lz_bfs}
\begin{cases}
1<p<\infty,\ 1\leq q\leq\infty,\ \beta\in\R;\\
p=1,\ q=1,\ \beta\geq0;\\
p=\infty,\ q=\infty,\ \beta\leq 0;\\
p=\infty,\ 1\leq q<\infty,\ \beta+\frac1q<0.
\end{cases}
\end{equation}

\begin{prop}\label{T:lz}
Let $m,n\in\N$, $n\geq 2$, and let $\alpha\in[\frac1{n'},\infty)$.
Assume that $1\leq p,q\le\infty$, $\beta\in\R$ and one of the
conditions in~\eqref{E:lz_bfs} is in force. Then $\VV\sp
mL\sp{p,q;\beta}(\Omega)$ is a~Banach algebra for every domain
$\Omega\in\mathcal J_{\alpha}$ if and only if $\alpha<1$ and either
of the following conditions is satisfied:
\begin{equation}\label{E:lz-algebra}
\begin{cases}
m(1-\alpha)>\frac 1p,\\
m(1-\alpha)=\frac 1p,\ q=1,\ \beta\geq0,\\
m(1-\alpha)=\frac 1p,\ q>1,\ \beta>\frac{1}{q'}.
\end{cases}
\end{equation}
\end{prop}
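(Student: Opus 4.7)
The plan is to apply Theorem~\ref{T:algebra-for-W} with $I(s)=s^{\alpha}$. Since $\alpha\geq 1/n'$, the function $I(t)/t^{1/n'}=t^{\alpha-1/n'}$ is non-decreasing, so \eqref{E:I-divided-by-power-increasing} holds and both the sufficiency and the sharpness parts of Theorem~\ref{T:algebra-for-W} are available. Consequently, $\VV^{m}L^{p,q;\beta}(\Omega)$ is a Banach algebra for every $\Omega\in\mathcal J_\alpha$ if and only if \eqref{E:psi-in-associate} holds with $X=L^{p,q;\beta}$ and $I(s)=s^\alpha$; this reduces the proposition to an analytic computation involving the associate norm of $L^{p,q;\beta}$.

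I would first dispose of the case $\alpha\geq 1$. For $m\geq 2$ the inner integral $\int_0^t s^{-\alpha}\,ds$ diverges for every $t>0$, so the quantity in \eqref{E:psi-in-associate} is identically infinite. For $m=1$, \eqref{E:psi-in-associate} becomes $\|t^{-\alpha}\|_{X'(0,1)}<\infty$; this fails because, on a space of finite measure, the associate $X'(0,1)$ of any rearrangement-invariant space is continuously embedded into $L^1(0,1)$, whereas $t^{-\alpha}\notin L^1(0,1)$ when $\alpha\geq 1$. Hence the condition $\alpha<1$ is necessary for the Banach-algebra property.

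For $\alpha<1$, an elementary integration gives
\begin{equation*}
\frac{1}{I(t)}\left(\int_0^t\frac{ds}{I(s)}\right)^{m-1}\approx t^{m(1-\alpha)-1}\qquad\text{for }t\in(0,1),
\end{equation*}
so that \eqref{E:psi-in-associate} is equivalent to $\|t^{m(1-\alpha)-1}\|_{X'(0,1)}<\infty$. I would then invoke the standard identification $(L^{p,q;\beta}(0,1))'\approx L^{p',q';-\beta}(0,1)$ for Lorentz--Zygmund spaces, with the natural reinterpretation at the boundary configurations listed in \eqref{E:lz_bfs}, and evaluate the Lorentz--Zygmund norm of the power $t^{-\sigma}$ with $\sigma=1-m(1-\alpha)$.

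The concluding step is a routine calculation: $\|t^{-\sigma}\|_{L^{p',q';-\beta}(0,1)}$ is finite exactly when either $\sigma<1/p'$, equivalently $m(1-\alpha)>1/p$, or $\sigma=1/p'$, equivalently $m(1-\alpha)=1/p$, together with an appropriate logarithmic condition. In this borderline case, the substitution $u=1+\log(1/t)$ reduces the defining integral, for $q>1$, to $\int_1^\infty u^{-\beta q'}\,du$, which converges if and only if $\beta>1/q'$; for $q=1$ the norm becomes $\sup_{t\in(0,1)}(1+\log(1/t))^{-\beta}$, finite if and only if $\beta\geq 0$. These three alternatives match \eqref{E:lz-algebra} exactly. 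The main technical nuisance will be bookkeeping of the endpoint configurations of $(p,q,\beta)$ in \eqref{E:lz_bfs}, especially $p\in\{1,\infty\}$, where the associate Lorentz--Zygmund space has to be interpreted separately, but in each such case the question reduces to the same power-function calculation.
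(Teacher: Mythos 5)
Your proposal is correct and follows essentially the same route as the paper: verify \eqref{E:I-divided-by-power-increasing} for $I(s)=s^{\alpha}$, reduce via Theorem~\ref{T:algebra-for-W} to condition \eqref{E:psi-in-associate}, derive $\alpha<1$ as necessary (the paper quotes Remark~\ref{conv-int-I}, whose justification is exactly your two-case argument), and then test the power function $t^{m(1-\alpha)-1}$ for membership in $(L^{p,q;\beta})'(0,1)$ via \eqref{E:lz_assoc}. Your explicit borderline computation at $m(1-\alpha)=\frac1p$ is just a spelled-out version of what the paper leaves as "easily verified."
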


The Orlicz-Sobolev spaces $\VV\sp{m}L\sp{A}(\Omega)$  are the object
of the next result. In particular, it recovers a result from
\cite{Ci}, dealing with the case of regular domains.

\begin{prop}\label{T:orlicz}
Let $m\in\N$, $n\geq 2$, and $\alpha\in[\frac1{n'},\infty)$. Let $A$
be a~Young function. Then $\VV\sp mL\sp{A}(\Omega)$ is a~Banach
algebra for every domain $\Omega\in\mathcal J_{\alpha}$ if and only
if  $\alpha<1$ and either of the following conditions is satisfied:
\begin{equation}\label{E:orlicz-algebra}
\begin{cases}
m \geq \tfrac 1{1-\alpha}, \\
m < \tfrac 1{1-\alpha} \quad \hbox{and}\quad   \int ^\infty
\big(\frac t{A(t)}\big)^{\frac {(1-\alpha)m}{1- (1-\alpha )m}}dt <
\infty.
\end{cases}
\end{equation}
\end{prop}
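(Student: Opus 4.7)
The plan is to apply Theorem~\ref{T:algebra-for-W} with the isoperimetric lower bound $I(s)=s^\alpha$, so that $\mathcal J_I=\mathcal J_\alpha$. Since $\alpha\geq \frac 1{n'}$, the function $I(t)/t^{\frac 1{n'}}=t^{\alpha-\frac 1{n'}}$ is non-decreasing on $(0,1)$, hence assumption \eqref{E:I-divided-by-power-increasing} is in force. Consequently, both the sufficiency and the sharpness parts of Theorem~\ref{T:algebra-for-W} are available, and the entire proposition will follow once we translate the abstract condition \eqref{E:psi-in-associate}, taken with $X(\Omega)=L^A(\Omega)$, into the elementary form \eqref{E:orlicz-algebra}.

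First, I would dispose of the case $\alpha\geq 1$: here $\int_{0}ds/s^{\alpha}=\infty$, so Remark~\ref{conv-int-I} forbids \eqref{E:psi-in-associate}, proving $\alpha<1$ is necessary. Assuming $\alpha<1$, a direct computation gives
\[
\frac{1}{I(t)}\left(\int_0^t\frac{ds}{I(s)}\right)^{m-1}
=\frac{t^{m(1-\alpha)-1}}{(1-\alpha)^{m-1}}\qquad\text{for }t\in(0,1),
\]
so \eqref{E:psi-in-associate} is equivalent to $\|t^{m(1-\alpha)-1}\|_{(L^A)'(0,1)}<\infty$. Using the identification of the associate space $(L^A)'$ with the Orlicz space $L^{\widetilde A}$ built on the Young conjugate $\widetilde A$ of $A$ (up to equivalence of norms), this reduces to the single requirement
\[
\|t^{m(1-\alpha)-1}\|_{L^{\widetilde A}(0,1)}<\infty.
\]

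I would then distinguish two subcases. If $m(1-\alpha)\geq 1$, the function $t^{m(1-\alpha)-1}$ is bounded on $(0,1)$ and therefore automatically belongs to $L^{\widetilde A}(0,1)$, which explains the first line of \eqref{E:orlicz-algebra}. If $m(1-\alpha)<1$, set $\gamma=1-m(1-\alpha)\in(0,1)$; by the definition of the Luxemburg norm and the change of variables $\tau=\lambda t^{-\gamma}$, the condition $\|t^{-\gamma}\|_{L^{\widetilde A}(0,1)}<\infty$ is equivalent to
\[
\int^{\infty}\frac{\widetilde A(\tau)}{\tau^{1+1/\gamma}}\,d\tau<\infty.
\]

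The central technical step, and the main obstacle, is then to convert this integrability of $\widetilde A$ into an integrability condition on $A$ itself, namely to prove the equivalence
\[
\int^{\infty}\frac{\widetilde A(\tau)}{\tau^{1+1/\gamma}}\,d\tau<\infty
\;\Longleftrightarrow\;
\int^{\infty}\left(\frac{t}{A(t)}\right)^{\gamma/(1-\gamma)}dt<\infty,
\]
which, noting that $\gamma/(1-\gamma)=m(1-\alpha)/(1-m(1-\alpha))$, is precisely the second line of \eqref{E:orlicz-algebra}. This is a classical duality manipulation for Young functions, already exploited in the regular-domain case \cite{Ci}: one writes $\widetilde A(\tau)\approx\int_0^{\tau}a^{-1}(r)\,dr$, where $a$ is the right-derivative of $A$, applies Fubini and the change of variable $s=a^{-1}(r)$, and uses $A(s)\approx s\,a(s)$, controlling the behaviour of $a^{-1}$ near $0$ and $\infty$ so that the two integrals differ only by boundary terms that are harmless for the convergence question. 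Once this equivalence is established, both directions of the proposition follow: sufficiency from the first part of Theorem~\ref{T:algebra-for-W}, and necessity (for every $\Omega\in\mathcal J_\alpha$) from its sharpness assertion, which applies thanks to \eqref{E:I-divided-by-power-increasing}.
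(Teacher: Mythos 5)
Your proposal is correct and follows essentially the same route as the paper's proof: reduce to condition \eqref{E:psi-in-associate} via Theorem \ref{T:algebra-for-W} (whose sharpness part applies because $I(t)=t^\alpha$ satisfies \eqref{E:I-divided-by-power-increasing}), identify $(L^A)'$ with $L^{\widetilde A}$, treat $m(1-\alpha)\geq1$ by boundedness of $t^{m(1-\alpha)-1}$, and in the remaining case convert $\|t^{-\gamma}\|_{L^{\widetilde A}(0,1)}<\infty$ into $\int^\infty \widetilde A(\tau)\,\tau^{-1-1/\gamma}\,d\tau<\infty$ and then into the integral condition on $A$ itself (the paper simply cites \cite[Lemma 2.3]{cianchioptimal} for this last equivalence, which you re-sketch). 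One arithmetic slip: with $\gamma=1-m(1-\alpha)$ the correct exponent is $(1-\gamma)/\gamma=\tfrac{m(1-\alpha)}{1-m(1-\alpha)}$, not $\gamma/(1-\gamma)$, so your displayed equivalence as written carries the reciprocal of the intended exponent.
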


The Lorentz--Zygmund-Sobolev and Orlicz-Sobolev spaces for which the
product operator is bounded into a lower-order space  for every
$\Omega\in \mathcal J_{\alpha}$ can be characterized via Theorem
\ref{T:algebra-restricted}.

\begin{prop}\label{T:lz-reduced}
Let $n,m,k\in\N$, $n\geq 2$, $1 \leq k \leq m$, and let
$\alpha\in[\frac1{n'},\infty)$. Assume that $1\leq p,q\le\infty$,
$\beta\in\R$ and one of the conditions in~\eqref{E:lz_bfs} is in
force. {Then, for every domain
$\Omega\in\mathcal J_{\alpha}$ there exists a constant $C$ such that
\begin{equation*}
\|uv\|_{\VV^{m-k}L\sp{p,q;\beta}(\Omega)} \leq C \|u\|_{\VV\sp
mL\sp{p,q;\beta}(\Omega)}\|v\|_{\VV\sp m L\sp{p,q;\beta}(\Omega)}
\end{equation*}
for every $u, v \in \VV\sp m L\sp{p,q;\beta}(\Omega)$}
 if and only if
$\alpha<1$, and either of the following conditions is satisfied:
\begin{equation}\label{E:lz-reduced}
\begin{cases}
(m+k)(1-\alpha)>\frac 1p,\\
(m+k)(1-\alpha)=\frac 1p,\ \beta\geq0.
\end{cases}
\end{equation}
\end{prop}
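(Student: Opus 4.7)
The plan is to obtain Proposition \ref{T:lz-reduced} by specialising Theorem \ref{T:algebra-restricted} to the case $I(s)=s^{\alpha}$ and $X(0,1)=L^{p,q;\beta}(0,1)$. First I would verify that the function $I(s)=s^{\alpha}$ satisfies the hypothesis \eqref{E:I-divided-by-power-increasing}: indeed, $I(t)/t^{1/n'}=t^{\alpha-1/n'}$ is non-decreasing on $(0,1)$ precisely because $\alpha\geq 1/n'$. Consequently both the direct and the sharpness parts of Theorem \ref{T:algebra-restricted} are applicable, and the proposition is reduced to the purely one-dimensional task of characterising when
\begin{equation*}
\sup_{t\in(0,1)}\frac{1}{\varphi_{L^{p,q;\beta}}(t)}\left(\int_0^t\frac{ds}{s^{\alpha}}\right)^{m+k}<\infty.
\end{equation*}

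Next I would dispose of the case $\alpha\geq 1$. Here $\int_0^t s^{-\alpha}\,ds=\infty$ for every $t>0$, so the above supremum is infinite for \emph{every} admissible $(p,q,\beta)$; by the necessity part of Theorem~\ref{T:algebra-restricted} this forces the desired product inequality to fail on some $\Omega\in\mathcal J_{\alpha}$. Hence $\alpha<1$ is necessary, and under this hypothesis $\int_0^t s^{-\alpha}\,ds=(1-\alpha)^{-1}t^{1-\alpha}$, so the condition reduces to
\begin{equation*}
\sup_{t\in(0,1)}\frac{t^{(m+k)(1-\alpha)}}{\varphi_{L^{p,q;\beta}}(t)}<\infty.
\end{equation*}

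The core step is then a direct asymptotic evaluation of the fundamental function $\varphi_{L^{p,q;\beta}}$ near $0$, via its defining formula \eqref{fund}. Writing $\ell(t)=1+\log(1/t)$, a short case-by-case computation through \eqref{E:lz_bfs} shows that
\begin{equation*}
\varphi_{L^{p,q;\beta}}(t)\approx
\begin{cases}
t^{1/p}\,\ell(t)^{\beta} & \text{if } 1\leq p<\infty,\\[1mm]
\ell(t)^{\beta+1/q} & \text{if } p=\infty,
\end{cases}
\end{equation*}
with the usual convention $1/\infty=0$ and the understanding that in the $p=\infty$ regime the exponent $\beta+1/q$ is non-positive by \eqref{E:lz_bfs}. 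Plugging this into the supremum above and comparing powers and logarithmic factors yields exactly the trichotomy encoded in \eqref{E:lz-reduced}: the supremum is finite iff either $(m+k)(1-\alpha)>1/p$, in which case the power dominates both the $t^{1/p}$-factor and any logarithmic correction, or $(m+k)(1-\alpha)=1/p$, in which case the finiteness is equivalent to $\beta\geq 0$ (and this also agrees, where applicable, with the parameter constraints in \eqref{E:lz_bfs}); if $(m+k)(1-\alpha)<1/p$ the supremum is infinite. For $p=\infty$ only the first alternative is ever relevant, since $1/p=0<(m+k)(1-\alpha)$.

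The main obstacle I anticipate is not conceptual but notational: all four regimes of \eqref{E:lz_bfs} must be handled, with particular care at $p=\infty$ (where the power factor degenerates) and at $p=q=1$ (where the a~priori constraint $\beta\geq 0$ already holds, making the equality case automatic). Once the fundamental-function asymptotics above are in hand, both directions of the proposition follow simultaneously from Theorem \ref{T:algebra-restricted}: sufficiency of \eqref{E:lz-reduced} from the direct part, necessity from the sharpness part.
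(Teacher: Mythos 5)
Your proposal is correct and follows essentially the same route as the paper: both reduce the statement to condition \eqref{E:power-m+k} via Theorem \ref{T:algebra-restricted} (after checking \eqref{E:I-divided-by-power-increasing} for $I(s)=s^{\alpha}$), deduce $\alpha<1$ from the divergence of $\int_0 s^{-\alpha}\,ds$ otherwise, and then compare $t^{(m+k)(1-\alpha)}$ with the fundamental function of $L^{p,q;\beta}(0,1)$, whose asymptotics you state correctly (the paper simply quotes them from the literature rather than recomputing). No gaps.
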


\begin{prop}\label{T:orlicz-reduced}
Let $n,m,k\in\N$, $n\geq 2$, $1 \leq k \leq m$, and let
$\alpha\in[\frac1{n'},\infty)$. Let $A$ be a~Young function.
{Then,
 for every domain $\Omega\in\mathcal J_{\alpha}$ there exists a
constant $C$ such that
\begin{equation*}
\|uv\|_{\VV^{m-k}L\sp{A}(\Omega)} \leq C \|u\|_{\VV\sp
mL\sp{A}(\Omega)}\|v\|_{\VV\sp m L\sp{A}(\Omega)}
\end{equation*}
for every $u, v \in \VV\sp m L\sp{p,q;\beta}(\Omega)$} if and only
if $\alpha<1$ and either of the following conditions is satisfied:
\begin{equation}\label{E:orlicz-reduced}
\begin{cases}
(m+k) \geq \tfrac 1{1-\alpha}, \\
(m+k) < \tfrac 1{1-\alpha} \quad \hbox{and}\quad  A(t) \geq C
t^{\frac 1{(1-\alpha)(m+k)}} \quad \hbox{for large } \,\, t\,,
\end{cases}
\end{equation}
for some positive constant $C$.
\end{prop}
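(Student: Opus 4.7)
The plan is to derive Proposition \ref{T:orlicz-reduced} as a direct consequence of Theorem \ref{T:algebra-restricted}, applied with $I(s)=s^{\alpha}$ (so that $\mathcal J_I=\mathcal J_\alpha$) and $X(\Omega)=L^A(\Omega)$. Since $\alpha\ge 1/n'$, the quotient $I(t)/t^{1/n'}=t^{\alpha-1/n'}$ is non-decreasing on $(0,1)$, so assumption \eqref{E:I-divided-by-power-increasing} holds, and both the sufficiency and the necessity parts of Theorem \ref{T:algebra-restricted} are at our disposal. The task therefore reduces to translating condition \eqref{E:power-m+k} into an equivalent condition on $A$ and $\alpha$.

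First I would dispose of the case $\alpha\ge 1$. Here $\int_0^t s^{-\alpha}\,ds=\infty$ for every $t>0$, so \eqref{E:power-m+k} fails for any Orlicz norm, and the necessity part of Theorem \ref{T:algebra-restricted} forces \eqref{E:P_bounded} to fail for some $\Omega\in\mathcal J_\alpha$. Hence $\alpha<1$ is necessary, matching the conclusion of the proposition.

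For $\alpha<1$, one has $\int_0^t s^{-\alpha}\,ds=t^{1-\alpha}/(1-\alpha)$, while a standard computation of the Luxemburg norm of a characteristic function gives the fundamental function of $L^A$ on $(0,1)$ as $\varphi_{L^A}(t)=1/A^{-1}(1/t)$. Substituting both expressions, condition \eqref{E:power-m+k} is equivalent to
\[
\sup_{t\in(0,1)} t^{(1-\alpha)(m+k)}A^{-1}(1/t)<\infty,
\]
which, by the change of variable $s=1/t$, becomes
\[
A^{-1}(s)\le C\,s^{(1-\alpha)(m+k)}\quad\text{for all sufficiently large }s.
\]

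Finally I would invert this growth condition. Set $\beta=(1-\alpha)(m+k)$. When $\beta\ge 1$, convexity of any Young function $A$ with $A(0)=0$ forces $A(t)/t$ to be non-decreasing, hence $A^{-1}(s)/s$ bounded above as $s\to\infty$; this gives $A^{-1}(s)\le Cs\le Cs^{\beta}$ for large $s$ automatically, yielding the first alternative in \eqref{E:orlicz-reduced}. When $\beta<1$, setting $t=A^{-1}(s)$ so that $s=A(t)$ turns the inequality into $A(t)\ge C^{-1/\beta}t^{1/\beta}$ for large $t$, which is precisely the second alternative. The only delicate point is this elementary but careful inversion between growth rates of $A$ and $A^{-1}$, together with the observation that the regime $\beta\ge 1$ imposes no restriction on $A$; once these are handled, both implications of the proposition follow at once from Theorem \ref{T:algebra-restricted}.
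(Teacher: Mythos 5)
Your proposal is correct and follows essentially the same route as the paper: reduce to Theorem \ref{T:algebra-restricted} with $I(t)=t^{\alpha}$, compute $\varphi_{L^A}(t)=1/A^{-1}(1/t)$ and $\bigl(\int_0^t s^{-\alpha}\,ds\bigr)^{m+k}=(1-\alpha)^{-(m+k)}t^{(1-\alpha)(m+k)}$, and invert the resulting growth condition on $A^{-1}$. You in fact spell out the final inversion (including the automatic validity when $(m+k)\geq\frac{1}{1-\alpha}$ via convexity of $A$) in more detail than the paper does.
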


\section{Background}\label{S:background}
We denote by $\M(\Omega)$
the set of all Lebesgue measurable functions from ~$\Omega$ into
$[-\infty , \infty]$. We also define $\M_+(\Omega)= \{u \in
\M(\Omega): u \geq 0\}$, and $\M _0(\Omega)= \{u \in \M(\Omega): u\
\textup{is finite a.e. in}\ \Omega\}$.

The \textit{decreasing rearrangement} $u\sp* : (0, 1) \to [0, \infty
]$ of a function $u \in \M(\Omega)$  is defined as
$$
u\sp*(s)=\sup\{t\in\R:\,|\{x\in \Omega :\,|u(x)|>t\}|>s\}  \quad
\hbox{for $ s\in(0,1)$.}
$$
We also define $u^{**} : (0,1) \to [0, \infty]$ as
$$
u^{**}(s) = \frac{1}{s}\int _0^s u^*(r)\, dr.
$$

We say that a functional $\|\cdot\|_{X(0,1)}: \M _+ (0, 1) \to
[0,\infty]$  is a \textit{function norm}, if, for all $f$, $g$ and
$\{f_j\}_{j\in\N}$ in $\M_+(0,1)$, and every $\lambda \geq0$, the
following properties hold:
\begin{itemize}
\item[(P1)]\qquad $\|f\|_{X(0,1)}=0$ if and only if $f=0$ a.e.;
$\|\lambda f\|_{X(0,1)}= \lambda\|f\|_{X(0,1)}$; \par\noindent
\qquad $\|f+g\|_{X(0,1)}\leq \|f\|_{X(0,1)}+ \|g\|_{X(0,1)}$;
\item[(P2)]\qquad $  f \le g$ a.e.\  implies $\|f\|_{X(0,1)} \le
\|g\|_{X(0,1)}$; \item[(P3)]\qquad $  f_j \nearrow f$ a.e.\ implies
$\|f_j\|_{X(0,1)} \nearrow \|f\|_{X(0,1)}$;
\item[(P4)]\qquad $\|1\|_{X(0,1)}<\infty$; \item[(P5)]\qquad  $\int_{0}\sp1 f(x)\,dx \le C
\|f\|_{X(0,1)}$ for some constant $C$ independent of $f$.
\end{itemize}
If, in addition,
\begin{itemize}
\item[(P6)]\qquad $\|f\|_{X(0,1)} = \|g\|_{X(0,1)}$ whenever
$f\sp* = g\sp *$,
\end{itemize}
we say that $\|\cdot\|_{X(0,1)}$ is a
\textit{rearrangement-invariant function norm}.

Given a rearrangement-invariant function norm
$\|\cdot\|_{X(0,1)}$, the space $X(\Omega)$ is
defined as the collection of all  functions  $u \in\M(\Omega)$ such that the expression
\[
\|u\|_{X(\Omega)}=\|u\sp* \|_{X(0,1)}
\]
is finite. Such expression defines a norm on $X(\Omega)$, and
the latter  is a Banach space  endowed with this norm, called a~\textit{rearrangement-invariant space}. Moreover,
$X(\Omega) \subset \M_0(\Omega)$ for any rearrangement-invariant space $X(\Omega)$.

With any rearrangement-invariant~function norm $\|\cdot\|_{X(0,1)}$,
it is associated another functional on $\M_+(0,1)$, denoted by
$\|\cdot\|_{X'(0,1)}$, and defined, for $g \in  \M_+(0,1)$, by
\begin{equation}\label{E:assoc}
\|g\|_{X'(0,1)}=\sup_{\overset{f\in \M_+(0,1)}{\|f\|_{X(0,1)}\leq
1}}\int_0\sp1 f(s)g(s)\,ds.
\end{equation}
It turns out that  $\|\cdot\|_{X'(0,1)}$ is also an rearrangement
invariant~function norm, which is called the~\textit{associate
function norm} of $\|\cdot\|_{X(0,1)}$. The rearrangement
invariant~space $X'(\Omega)$ built upon the function norm $\|\cdot
\|_{X'(0,1)}$ is called the \textit{associate space} of $X(\Omega)$.
Given an rearrangement-invariant~function norm $\|\cdot\|_{X(0,1)}$,
the \textit{H\"older inequality}
\begin{equation}\label{E:holder}
\int_{\Omega}|u(x)v(x)|\,dx\leq\|u\|_{X(\Omega)}\|v\|_{X'(\Omega)}
\end{equation}
holds for every $u \in X(\Omega)$ and $v \in X'(\Omega)$. For every
rearrangement-invariant~space $X(\Omega)$ the identity
$X''(\Omega)=X(\Omega)$ holds and, moreover, for every
$f\in\M(\Omega)$, we have that
\begin{equation}\label{E:dual-norm}
\|u\|_{X(\Omega)}=\sup_{v\in\M(\Omega);\
\|v\|_{X'(\Omega)}\leq1}\int_0\sp1|u(x)v(x)|\,dx.
\end{equation}
The fundamental functions of a rearrangement-invariant~space
$X(\Omega)$ and its associate space $X'(\Omega)$ satisfy
\begin{equation}\label{E:product-of-fundamental-functions}
\varphi_X(t)\varphi_{X'}(t)=t\quad \textup{for every}\ t\in(0,1).
\end{equation}
Since we are assuming that $\Omega$ has finite measure,
\begin{equation}\label{l1linf}
L^\infty (\Omega) \to X(\Omega) \to L^1(\Omega)
\end{equation}
for every rearrangement-invariant~space $X(\Omega)$.
\\
A basic property of rearrangements is the \textit{Hardy-Littlewood
inequality} which tells us that, if $u, v \in\M(\Omega)$,
then
\begin{equation}\label{E:HL}
\int _\Omega |u(x) v(x)| dx \leq \int _0^1 u^*(t) v^*(t) dt.
\end{equation}
A~key fact concerning rearrangement-invariant function norms is the
\textit{Hardy--Littlewood--P\'olya principle} which states that if,
for some $u,v\in\M(\Omega)$,
\begin{equation}\label{E:HLP}
\int_0\sp tu\sp*(s)\,ds\leq\int_0\sp tv\sp*(s)\,ds\ \hbox{for
$t\in(0,1)$,}
\end{equation}
then
\[
\|u\|_{X(\Omega)}\leq \|v\|_{X(\Omega)}
\]
for every rearrangement-invariant~function norm
$\|\cdot\|_{X(0,1)}$. Moreover,
\begin{equation}\label{E:HL-product}
\|uv\|_{X(\Omega)}\leq\|u\sp*v\sp*\|_{X(0,1)}
\end{equation}
for every rearrangement-invariant~function norm
$\|\cdot\|_{X(0,1)}$, and all functions $u,v\in\M(\Omega)$.
Inequality \eqref{E:HL-product} follows from the inequality
$$
\int_0\sp t(uv)\sp*(s)\,ds\leq \int_0\sp tu\sp*(s)v\sp*(s)\,ds,
$$
and  the Hardy--Littlewood--P\'olya principle.

We refer the reader to~\cite{BS} for proofs of the results recalled
above, and for a comprehensive treatment of rearrangement-invariant
spaces.

Let $1\leq p,q\le\infty$ and $\beta\in\R$. We define the functional
$\|\cdot\|_{L\sp{p,q;\beta}(0,1)}$
as
$$
\|f\|_{L\sp{p,q;\beta}(0,1)}=
\left\|s\sp{\frac{1}{p}-\frac{1}{q}}\log \sp
\beta\left(\tfrac{2}{s}\right) f^*(s)\right\|_{L\sp q(0,1)}
$$
for $f \in \M_+(0,1)$. If one of the conditions in~\eqref{E:lz_bfs}
is satisfied, then $\|\cdot\|_{L\sp{p,q;\beta}(0,1)}$ is equivalent
to a~rearrangement-invariant~function norm,   called a
\textit{Lorentz--Zygmund norm} (for details see e.g~\cite{BR},~\cite{glz}
or~\cite{fs-1}). The corresponding rearrangement-invariant~space
$L\sp{p,q;\beta}(\Omega)$ is called the \textit{Lorentz--Zygmund
space}. When $\beta=0$, the space $L\sp{p,q;0}(\Omega)$ is denoted
by $L\sp{p,q}(\Omega)$ and called \textit{Lorentz space}. It is
known (e.g.~\cite[Theorem~6.11]{glz} or~\cite[Theorem~9.6.13]{fs-1}) that
\begin{equation}\label{E:lz_assoc}
(L\sp{p,q;\beta})'(\Omega)
=
\begin{cases}
L\sp{p',q';-\beta}(\Omega)&\textup{if}\ p<\infty;\\
L\sp{(1,q';-\beta-1)}(\Omega)&\textup{if}\ p=\infty,\ 1\leq q<\infty,\ \beta+\frac 1q<0;\\
L\sp{1}(\Omega)&\textup{if}\ p=\infty,\ q=\infty,\ \beta=0,
\end{cases}
\end{equation}
where $L\sp{(p,q;\beta)}(\Omega)$ denotes  the function space
defined analogously to $L\sp{p,q;\beta}(\Omega)$ but with the
functional $\|\cdot\|_{L\sp{(p,q;\beta)}(0,1)}$ given by
$$
\|f\|_{L\sp{(p,q;\beta)}(0,1)}=
\left\|s\sp{\frac{1}{p}-\frac{1}{q}}\log \sp
\beta\left(\tfrac{2}{s}\right) f^{**}(s)\right\|_{L\sp q(0,1)}
$$
for $f \in \M_+(0,1)$. Note that, if $\beta =0$, then
$$ L\sp{p,q;0}(\Omega)= L\sp{p,q}(\Omega),$$
a standard Lorentz space. In particular, if $p=q$, then
$$L\sp{p,p}(\Omega) = L\sp{p}(\Omega),$$
 a Lebesgue space.

A generalization of the Lebesgue spaces in a different direction is
provided by the Orlicz spaces. Let $A: [0, \infty ) \to [0, \infty
]$ be a Young function, namely a convex (non-trivial),
left-continuous function vanishing at $0$.
 The Orlicz space $L^A (\Omega)$ is the
rearrangement-invariant space associated with  the
 \emph{Luxemburg function norm} defined  as
\begin{equation*}
\|f\|_{L^A(0,1)}= \inf \left\{ \lambda >0 :  \int_0\sp 1A \left(
\frac{f(s)}{\lambda} \right) ds \leq 1 \right\}
\end{equation*}
for $f \in \M _+(0,1)$. In particular, $L^A (\Omega)= L^p (\Omega)$
if $A(t)= t^p$ for some $p \in [1, \infty )$, and $L^A (\Omega)=
L^\infty (\Omega)$ if $A(t)= \infty \chi_{(1, \infty)}(t)$.
\\ The associate function norm of $\|\cdot\|_{L^A(0,1)}$ is equivalent to
the function norm $\|\cdot\|_{L^{\widetilde A}(0,1)}$, where
$\widetilde A$ is the Young conjugate of $A$ defined as
$$\widetilde A(t) = \sup\{ts- A(s): s \geq 0\} \qquad \hbox{for
$t\geq 0$.}$$

\section{Key one-dimensional inequalities}\label{S:pointwise-inequalities}

In this section we shall state and prove two key assertions concerning
one-dimensional inequalities involving non-increasing functions and
rearrangement-invariant~spaces defined on an interval. Both these
results are of independent interest and they constitute a new
approach to inequalities involving products of functions.

Assume that $I$ is a~positive non-decreasing function on $(0,1)$. We
denote by $H_I$ the operator defined at every nonnegative measurable
function $g$ on $(0,1)$ by
\[
H_Ig(t)=\int_t\sp 1\frac{g(r)}{I(r)}\,dr \quad \hbox{for
$t\in(0,1)$.}
\]
Moreover, given $m\in\N$, we set
\[
H_I\sp m=\underbrace{H_I\circ H_I\circ \dots\circ H_I}_{
m-\textup{times}}.
\]
We also denote by $H_I\sp 0$ the identity operator.
It is easily verified that
\[
H\sp m_Ig(t)=\frac1{(m-1)!}\int_t\sp
1\frac{g(s)}{I(s)}\left(\int_t\sp s\frac{dr}{I(r)}\right)\sp
{m-1}\,ds \quad \hbox{for $m\in\N$ and $t\in(0,1)$,}
\]
 see~\cite[Remark~8.2~(iii)]{CPS}.

Let $X(0,1)$ be a rearrangement-invariant~space and let $m
\in \N$. If the function $I$ satisfies~\eqref{E:2I}, then the optimal (smallest) rearrangement-invariant~space $X_m(0,1)$ such that
\begin{equation}\label{optHm}
H\sp m_{I} : X(0,1) \to X_m(0,1)
\end{equation}
is endowed with the function norm $\|\cdot\|_{X_m(0,1)}$, whose
associate function norm is given by
\begin{equation}\label{E:optimal-hardy-embedding}
\|g\|_{X_m'(0,1)}=\left\|\frac{1}{I(t)}\int_0\sp
tg\sp*(s)\left(\int_s\sp
t\frac{dr}{I(r)}\right)\sp{m-1}\,ds\right\|_{X'(0,1)}
\end{equation}
for $g\in\M(0,1)$  \cite[Proposition~8.3]{CPS}.

\bigskip

The following lemma  provides us with a  pointwise inequality
involving the operator $H\sp k_I$ for $k=1,\dots,m-1$ for $I$
satisfying~\eqref{E:convergence-of-I-omegaI}. For every such $I$ we
denote by ${\psi _I}$ the function defined by
\begin{equation}\label{E:psi}
{\psi _I}(t)=\left(\int_0\sp t\frac{dr}{I(r)}\right)\sp{m} \quad
\hbox{for $t\in(0,1)$.}
\end{equation}

\begin{lemma}\label{T:thm1}
Assume that $m, k\in\N$, $m\geq 2$, $1 \leq k \leq m-1$. Let $I$ be
positive a~non-decreasing function on $(0,1)$
satisfying~\eqref{E:convergence-of-I-omegaI} and let ${\psi _I}$ be
the function defined by~\eqref{E:psi}. There exists a constant
$C=C(m)$ such that, if $g\in\mathcal M_+(0,1)$ and
\begin{equation}\label{E:pointwise-estimate-for-function}
g\sp*(t)\leq\frac1{{\psi _I}(t)}\quad\textit{for }\ t\in(0,1),
\end{equation}
then
\begin{equation}\label{E:pointwise-estimate-for-hardy-operator}
H\sp k_Ig\sp*(t)\leq C  g\sp*(t)\sp{1-\frac km}\quad \textit{for }\
t\in(0,1)\ \textit{and}\ k\in\{1,\dots,m\}.
\end{equation}
\end{lemma}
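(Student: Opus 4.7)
The plan is to reduce the lemma to a one-variable integral inequality via the change of variable $u = \int_0^r ds/I(s)$, and then argue by induction on $k$.

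Setting $\Psi(r) = \int_0^r ds/I(s)$, $L = \Psi(1)$ (finite by \eqref{E:convergence-of-I-omegaI}), and $h(u) = g^*(\Psi^{-1}(u))$ for $u \in (0,L)$, so that $\psi_I = \Psi^m$, one sees that $h$ is non-increasing, the hypothesis $g^*(t) \leq 1/\psi_I(t)$ becomes $h(u) \leq u^{-m}$, and $du = dr/I(r)$ gives $H_I g^*(t) = \int_{\Psi(t)}^L h(u)\,du$. Iterating, one gets $H_I^k g^*(t) = \phi_k(\Psi(t))$, where $\phi_0 = h$ and $\phi_k(a) = \int_a^L \phi_{k-1}(u)\,du$; each $\phi_k$ is automatically non-increasing, so no rearrangements are needed between compositions. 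The lemma reduces to the claim that
\[
\phi_k(a) \leq C(m)\, h(a)^{1 - k/m} \quad \text{for } a \in (0,L),\ k = 1,\dots,m-1.
\]

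I would prove this by induction on $k$, splitting each integral at the balancing point $b := h(a)^{-1/m}$; note $b \geq a$ since $h(a) \leq a^{-m}$. For $k = 1$, when $b \leq L$ one estimates
\[
\int_a^b h(u)\,du \leq h(a)\,b = h(a)^{1-1/m}, \quad \int_b^L h(u)\,du \leq \int_b^L u^{-m}\,du \leq \frac{b^{1-m}}{m-1} = \frac{h(a)^{1-1/m}}{m-1},
\]
using $h$ non-increasing on $(a,b)$ and the hypothesis $m \geq 2$ to make the tail integrable. When $b > L$, the inequality $L < b = h(a)^{-1/m}$ gives $\int_a^L h \leq h(a)\,L \leq h(a)^{1-1/m}$ directly. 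The inductive step repeats this splitting: the inductive hypothesis combined with $h(u) \leq u^{-m}$ yields both $\phi_{k-1}(u) \leq C\, h(u)^{1-(k-1)/m}$ and $\phi_{k-1}(u) \leq C\, u^{-(m-k+1)}$; applying the former on $(a,b)$ (with $h(u) \leq h(a)$) and the latter on $(b,L)$ produces $\phi_k(a) \leq C(m)\, h(a)^{1-k/m}$, provided the tail integral $\int_b^L u^{-(m-k+1)}\,du$ converges, i.e.\ $m - k + 1 > 1$.

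The main technical point is this integrability requirement, which corresponds exactly to the restriction $k \leq m-1$; it is sharp, since at $k = m$ the tail piece becomes logarithmic and the bound genuinely fails (for $h(u)=u^{-m}$ one has $\phi_m(a) \sim \log(L/a)$). The case $b > L$ is a minor nuisance but is handled uniformly by $L \leq b = h(a)^{-1/m}$, which always produces the correct power of $h(a)$. The argument uses only that $h$ is non-increasing and bounded by $u^{-m}$; no deeper rearrangement machinery is needed.
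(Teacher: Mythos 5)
Your proof is correct and follows essentially the same strategy as the paper's: both split the integral at the balancing point where $g^*(t)=1/\psi_I(\cdot)$ (your $b=h(a)^{-1/m}$ is exactly the paper's splitting point after the substitution $u=\Psi(r)$), use monotonicity of $g^*$ on the near piece and the hypothesis $g^*\leq 1/\psi_I$ on the far piece, and handle separately the case where the balancing point exceeds the right endpoint. The only difference is organizational: the paper estimates the closed-form iterated kernel $\tfrac1{(k-1)!}\int_t^1\tfrac{g^*(s)}{I(s)}\bigl(\int_t^s\tfrac{dr}{I(r)}\bigr)^{k-1}\,ds$ directly in one step, whereas you induct on $k$.
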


\begin{proof}
Fix $g\in\mathcal M_+(0,1)$ such
that~\eqref{E:pointwise-estimate-for-function} holds, and any
$t\in(0,1)$. Define $a\in(0,1]$ by the identity
\[
g\sp*(t)=\frac1{{\psi _I}(a)}\quad\textup{if}\ g\sp*(t)>\frac1{{\psi
_I}(1)}
\]
and $a=1$ otherwise. Note that the definition is correct since
${\psi _I}$ is continuous and strictly increasing on $(0,1)$, and
 $\lim_{t\to0_+}{\psi _I}(t)=0$.
\par\noindent
Assume first that $g\sp*(t)>\frac 1{{\psi _I}(1)}$.
Then~\eqref{E:pointwise-estimate-for-function} and the monotonicity
of ${\psi _I}$ imply that $t\leq a\leq 1$. We thus get
\[
g\sp*(s)\leq
\begin{cases}
g\sp*(t)&\textup{if}\ s\in[t,a),\\
\frac1{{\psi _I}(s)}&\textup{if}\ s\in[a,1).
\end{cases}
\]
Fix $k\in\{1,\dots, m-1\}$. Then, consequently,
\begin{align*}
(k-1)!H\sp k_Ig\sp*(t)
&=
\int_t\sp a\frac{g\sp*(s)}{I(s)}\left(\int_t\sp s\frac{dr}{I(r)}\right)\sp{k-1}\,ds+
\int_a\sp1\frac{g\sp*(s)}{I(s)}\left(\int_t\sp s\frac{dr}{I(r)}\right)\sp{k-1}\,ds\\
&\leq g\sp*(t)\int_t\sp a\frac{1}{I(s)}\left(\int_t\sp
s\frac{dr}{I(r)}\right)\sp{k-1}\,ds+ \int_a\sp1\frac{1}{{\psi
_I}(s)I(s)}\left(\int_t\sp s\frac{dr}{I(r)}\right)\sp{k-1}\,ds.
\end{align*}
By the definition of $a$,
\[
g\sp*(t)\int_t\sp a\frac{1}{I(s)}\left(\int_t\sp
s\frac{dr}{I(r)}\right)\sp{k-1}\,ds=\frac{g\sp*(t)}{k}\left(\int_t\sp
a\frac{dr}{I(r)}\right)\sp{k} \leq \frac{g\sp*(t)}{k}\left(\int_0\sp
a\frac{dr}{I(r)}\right)\sp{k} = \frac{g\sp*(t)}{k}{\psi
_I}(a)\sp{\frac km} = \frac1k g\sp*(t)\sp{1-\frac km}
\]
and
\begin{align*}
\int_a\sp1\frac{1}{{\psi _I}(s)I(s)}\left(\int_t\sp
s\frac{dr}{I(r)}\right)\sp{k-1}\,ds &=
\int_a\sp1\frac{1}{I(s)}\frac{\left(\int_t\sp
s\frac{dr}{I(r)}\right)\sp{k-1}}{\left(\int_0\sp
s\frac{dr}{I(r)}\right)\sp{m}}\,ds \leq
\int_a\sp1\frac{1}{I(s)}\left(\int_0\sp s\frac{dr}{I(r)}\right)\sp{k-m-1}\,ds\\
&=
\tfrac1{m-k}\left[\left(\int_0\sp a\frac{dr}{I(r)}\right)\sp{k-m}-\left(\int_0\sp 1\frac{dr}{I(r)}\right)\sp{k-m}\right]\\
&\leq \tfrac1{m-k}\left(\int_0\sp a\frac{dr}{I(r)}\right)\sp{k-m} =
\tfrac1{m-k}{\psi _I}(a)\sp{\frac km-1} =
\tfrac1{m-k}g\sp{*}(t)\sp{1-\frac k{m}}.
\end{align*}
Assume next that $g\sp*(t)\leq \frac 1{{\psi _I}(1)}$. Then $a=1$.
Similarly as above, we have that
\begin{align*}
(k-1)!H\sp k_Ig\sp*(t)
&=
\int_t\sp 1\frac{g\sp*(s)}{I(s)}\left(\int_t\sp s\frac{dr}{I(r)}\right)\sp{k-1}\,ds
\leq
g\sp*(t)\int_t\sp 1\frac{1}{I(s)}\left(\int_t\sp s\frac{dr}{I(r)}\right)\sp{k-1}\,ds\\
&= \frac{g\sp*(t)}{k}\left(\int_t\sp 1\frac{dr}{I(r)}\right)\sp{k}
\leq \frac{g\sp*(t)}{k}{\psi _I}(1)\sp{\frac km} \leq
\frac1{k}g\sp*(t)\sp{1-\frac km}.
\end{align*}
Altogether, inequality
\eqref{E:pointwise-estimate-for-hardy-operator} follows.
\end{proof}

Given   a rearrangement-invariant~function norm $\|\cdot\|_{X(0,1)}$ and
$p\in(1,\infty)$, we define the functional
$\|\cdot\|_{X\sp p(0,1)}$ by
\[
\|g\|_{X\sp p(0,1)}=\|g \sp{p}\|_{X(0,1)}\sp{\frac 1p}\quad
\hbox{for $g\in \M_+(0,1)$.}
\]
The functional $\|\cdot\|_{X\sp p(0,1)}$ is also an rearrangement
invariant~function norm. Moreover, the inequality
\begin{equation}\label{E:generalized-hoelder}
\|fg\|_{X(0,1)}\leq \|f\|_{X\sp{p}(0,1)}\|g\|_{X\sp{p'}(0,1)}
\end{equation}
holds for every $f,g\in\M(0,1)$ (see e.g.~\cite[Lemma~1]{MP}).

The following lemma, of possible independent interest, is a major
tool in the proofs of our main results.

\begin{lemma}\label{T:thm2}
Assume that $m\in\N$, $m\geq 2$. Let $I$ be a positive
non-decreasing function on $(0,1)$, and let $\psi _I$ be the
function defined by \eqref{E:psi}. Let $\|\cdot\|_{X(0,1)}$ be
a~rearrangement-invariant~function norm. Then the following
statements are equivalent:

\textup{(i)} Condition~\eqref{E:convergence-of-I-omegaI} holds, and
there exists a positive constant $C$ such that
\begin{equation}\label{E:embedding-to-marcinkiewicz}
\sup_{t\in(0,1)}g\sp{**}(t){\psi _I}(t)\leq C\|g\|_{X(0,1)}
\end{equation}
 for every
$g\in\M_+(0,1)$.

\textup{(ii)} Condition~\eqref{E:convergence-of-I-omegaI} holds, and
there exists a positive constant $C$ such that
\begin{equation}\label{E:embedding-to-weak-marcinkiewicz}
\sup_{t\in(0,1)}g\sp*(t){\psi _I}(t)\leq C\|g\|_{X(0,1)}
\end{equation}
 for every
$g\in\M_+(0,1)$.

\textup{(iii)} For every $k\in\N$, $1\leq k\leq m-1$, there is
a~positive constant $C$ such that
\begin{equation}\label{E:relation-between-x-and-y}
\|g\|_{X\sp {\tfrac m{m-k}}(0,1)}\leq C\|g\|_{X_k(0,1)}
\end{equation}
 for every
$g\in\M_+(0,1)$.

\textup{(iv)} For every $k\in\N$, $1\leq k\leq m-1$, there exists
a~positive constant $C$ such that
\begin{equation}\label{E:estimate-of-product}
\|fg\|_{X(0,1)}\leq C\|f\|_{X_k(0,1)}\|g\|_{X_{m-k}(0,1)}
\end{equation}
 for every
$f,g\in\M_+(0,1)$.

 \textup{(v)} There exists $k\in\N$, $1\leq k\leq
m-1$, and a~positive constant $C$ such that
\eqref{E:estimate-of-product} holds.

\textup{(vi)} There exists $k\in\N$, $1\leq k\leq m-1$, and
a~positive constant $C$ such that
\begin{equation}\label{E:two-hardys}
\|H\sp k_If \, H\sp{m-k}_Ig\|_{X(0,1)}\leq C \|f\|_{X(0,1)}
\|g\|_{X(0,1)}
\end{equation}
for every $f,g\in\M_+(0,1)$.

 \textup{(vii)} There exists a positive
constant $C$ such that
\begin{equation}\label{E:fundamental-estimate}
\left(\int_0\sp t\frac{ds}{I(s)}\right)\sp m\leq C\varphi_X(t) \quad
\hbox{for  $ t\in(0,1)$.}
\end{equation}
\end{lemma}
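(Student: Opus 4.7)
The plan is to prove the seven conditions equivalent via a double cycle pivoting on \eqref{E:fundamental-estimate}. I intend to establish first the ``scalar'' loop $(\textup{vii}) \Rightarrow (\textup{i}) \Rightarrow (\textup{ii}) \Rightarrow (\textup{vii})$, which amounts to saying that $X(0,1)$ embeds into the Marcinkiewicz-type space with fundamental function $1/\psi_I$, and then the ``operator'' loop $(\textup{vii}) \Rightarrow (\textup{iii}) \Rightarrow (\textup{iv}) \Rightarrow (\textup{v}) \Rightarrow (\textup{vi}) \Rightarrow (\textup{vii})$ built on the optimal target space $X_k$ of $H_I^k$. Observe that \eqref{E:fundamental-estimate} automatically forces \eqref{E:convergence-of-I-omegaI}, so the convergence hypothesis appearing in (i) and (ii) is not a separate obligation.

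For the scalar loop, (i) $\Rightarrow$ (ii) is immediate from $g^* \le g^{**}$. For (vii) $\Rightarrow$ (i) I would invoke the universal embedding of any rearrangement-invariant space into its Marcinkiewicz hull, $g^{**}(t)\,\varphi_X(t) \le \|g\|_{X(0,1)}$, combined with $\psi_I(t) \le C \varphi_X(t)$. The remaining implication (ii) $\Rightarrow$ (vii) is obtained by testing \eqref{E:embedding-to-weak-marcinkiewicz} with $g = \chi_{(0,t)}$: since $\psi_I$ is non-decreasing, $\sup_s g^*(s)\psi_I(s) = \psi_I(t)$, while $\|g\|_{X(0,1)} = \varphi_X(t)$.

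For the operator loop, (vii) $\Rightarrow$ (iii) is the main ``upgrade'' step. The plan is to combine Lemma~\ref{T:thm1} with the characterization of $X_k$ as the smallest rearrangement-invariant target of $H_I^k$ acting on $X(0,1)$. Given $g \in X(0,1)$, the equivalence (vii) $\Leftrightarrow$ (ii) yields $g^*(t) \le C\|g\|_{X(0,1)}/\psi_I(t)$, so the rescaled version of Lemma~\ref{T:thm1} gives the pointwise bound $H_I^k g^*(t) \le C \|g\|_{X(0,1)}^{k/m}\,g^*(t)^{1-k/m}$; raising to the power $m/(m-k)$ and taking the $X$-norm produces $\|H_I^k g^*\|_{X^{m/(m-k)}(0,1)} \le C \|g\|_{X(0,1)}$. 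In other words $H_I^k$ maps $X(0,1)$ boundedly into $X^{m/(m-k)}(0,1)$, and the minimality of $X_k$ among such targets at once delivers \eqref{E:relation-between-x-and-y}. The implication (iii) $\Rightarrow$ (iv) follows from \eqref{E:generalized-hoelder} applied with $p = m/(m-k)$ and $p' = m/k$, invoking (iii) once for $k$ and once for $m-k$; (iv) $\Rightarrow$ (v) is trivial; and (v) $\Rightarrow$ (vi) is obtained by substituting $H_I^k \tilde f$, $H_I^{m-k}\tilde g$ into \eqref{E:estimate-of-product} and using $\|H_I^j h\|_{X_j(0,1)} \le C \|h\|_{X(0,1)}$ from \eqref{optHm}.

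The main obstacle will be the closing implication (vi) $\Rightarrow$ (vii). I would test \eqref{E:two-hardys} with $f = g = \chi_{(0,t_0)}$; a direct computation gives $H_I^k\chi_{(0,t_0)}(s) = \tfrac{1}{k!}\bigl(\int_s^{t_0} du/I(u)\bigr)^k \chi_{(0,t_0)}(s)$, so the product equals a constant multiple of the non-increasing function $h(s) = \bigl(\int_s^{t_0}du/I(u)\bigr)^m \chi_{(0,t_0)}(s)$ whose limit at $s\to 0^+$ is precisely $\psi_I(t_0)$, while \eqref{E:two-hardys} bounds $\|h\|_{X(0,1)} \le C\varphi_X(t_0)^2$. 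The naive pointwise lower bound $\|h\|_{X(0,1)} \ge h(\tau)\,\varphi_X(\tau)$ at $\tau = t_0/2$ recovers only $\bigl(\int_{t_0/2}^{t_0}du/I(u)\bigr)^m \le C \varphi_X(t_0)$, strictly weaker than \eqref{E:fundamental-estimate}. To extract the sharp bound I expect one has to pair $h$ against the weight $\tfrac{1}{I(s)}\chi_{(0,t_0)}(s)$: the substitution $u = \int_s^{t_0} dr/I(r)$ evaluates the resulting integral in closed form as a constant multiple of $\psi_I(t_0)^{(m+1)/m}$, and then estimating $\|\tfrac{1}{I}\chi_{(0,t_0)}\|_{X'(0,1)}$ from above via the Marcinkiewicz embedding for the associate space $X'$ should isolate \eqref{E:fundamental-estimate}.
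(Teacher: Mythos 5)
Most of your architecture is sound and close to the paper's. Your scalar loop is fine: (vii)$\Rightarrow$(i) via $g^{**}(t)\varphi_X(t)\leq\|g\|_{X(0,1)}$, (i)$\Rightarrow$(ii) trivially, and your direct proof of (ii)$\Rightarrow$(vii) by testing with $g=\chi_{(0,t)}$ is a legitimate shortcut the paper does not use (it routes (ii) through (iii)--(vi) instead). Your (vii)$\Rightarrow$(iii) via the rescaled Lemma~\ref{T:thm1} together with the minimality of $X_k$, and the chain (iii)$\Rightarrow$(iv)$\Rightarrow$(v)$\Rightarrow$(vi), coincide with the paper's arguments. The problem is the closing implication (vi)$\Rightarrow$(vii), which you correctly identify as the obstacle but do not actually prove.

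Your proposed fix -- pairing $h(s)=\bigl(\int_s^{t_0}\frac{du}{I(u)}\bigr)^m\chi_{(0,t_0)}(s)$ against $\frac{1}{I(s)}\chi_{(0,t_0)}(s)$ -- computes the left-hand side correctly ($\frac{1}{m+1}\psi_I(t_0)^{(m+1)/m}$), but to conclude $\psi_I(t_0)\leq C\varphi_X(t_0)$ from $\|h\|_{X(0,1)}\leq C\varphi_X(t_0)^2$ you would need the upper bound $\|\frac1I\chi_{(0,t_0)}\|_{X'(0,1)}\leq C\bigl(\int_0^{t_0}\frac{ds}{I(s)}\bigr)/\varphi_X(t_0)$. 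By H\"older this quantity is always \emph{at least} $\bigl(\int_0^{t_0}\frac{ds}{I(s)}\bigr)/\varphi_X(t_0)$, so what you need is a reverse H\"older inequality asserting that $\frac1I\chi_{(0,t_0)}$ essentially saturates the $X$--$X'$ duality against $\chi_{(0,t_0)}$; this is false for general $X$ and $I$ (the decreasing weight $\frac1I$ can concentrate near $0$, making its $X'$-norm much larger, or even infinite, while $\int_0^{t_0}\frac{ds}{I(s)}$ stays finite). There is no ``Marcinkiewicz embedding for $X'$'' giving an upper bound of the required form: the Marcinkiewicz functional $\sup_t w^{**}(t)\varphi_{X'}(t)$ bounds $\|w\|_{X'}$ from \emph{below}, and the Lorentz-type upper bound $\int_0^{t_0}\frac{1}{I(s)}\,d\varphi_{X'}(s)$ is not controlled by $\bigl(\int_0^{t_0}\frac{ds}{I(s)}\bigr)/\varphi_X(t_0)$ in general. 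The paper's proof of (vi)$\Rightarrow$(vii) is instead a genuine bootstrap: it tests \eqref{E:two-hardys} with the one-parameter family $f=\chi_{(\ve,a)}\bigl(\int_t^a\frac{dr}{I(r)}\bigr)^b$ and a companion $g$ chosen so that the product is again a pure power, uses the generalized H\"older inequality \eqref{E:generalized-hoelder} to absorb the norm of $g$, and obtains the self-improving inequality \eqref{E:star3} linking exponent $b$ to exponent $(b+k)\frac{m}{m-k}$. Iterating along $b_j=m[(\frac{m}{m-k})^j-1]\to\infty$ with quantitative control of the accumulated constants $K_j$, and identifying the limit of the normalized $L^1$ averages with the $L^\infty$ norm of $\bigl(\int_t^a\frac{dr}{I(r)}\bigr)^m$, yields \eqref{E:fundamental-estimate} (after a case split on whether $m\leq 2k$ or $m\leq 2(m-k)$). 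Some such iteration, not a single duality pairing, is the missing ingredient in your argument.
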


\begin{proof}
(i)$\Rightarrow$(ii)\quad This implication is trivial thanks to the
universal pointwise estimate $g\sp*(t)\leq g\sp{**}(t)$ which holds
for every $g\in\M(0,1)$ and every $t\in(0,1)$.

(ii)$\Rightarrow$(iii)\quad Fix $k\in \N$, $1\leq k\leq m-1$. In
view of the optimality of the space $X_k(0,1)$ in $H\sp
k_I:X(0,1)\to X_k(0,1)$, mentioned above Lemma~\ref{T:thm1}, the assertion will follow once
we show that
\begin{equation}\label{Hbound} H\sp{k}_I : X(0,1) \to
X\sp{\frac m{m-k}}(0,1).
\end{equation}
\par\noindent
 Let $g\in X(0,1)$, $g\not\equiv0$, and let $C$ be
the constant from~\eqref{E:embedding-to-weak-marcinkiewicz}. Define
$h=\frac g{C\|g\|_{X(0,1)}}$.
Inequality~\eqref{E:embedding-to-weak-marcinkiewicz} implies that
$h\sp*(t)\leq\frac 1{{\psi _I}(t)}$ for $t\in(0,1)$, whence, by
Lemma~\ref{T:thm1},
\[
H\sp{k}_Ih\sp*(t)\leq C'h\sp*(t)\sp{1-\frac km} \quad \hbox{for
$t\in(0,1)$,}
\]
for some constant $C'=C'(m,k)$. Thus,
\begin{align*}
\|H\sp{k}_Ig\sp*\|_{X\sp{\frac{m}{m-k}}(0,1)}
&=
C\|g\|_{X(0,1)}\|H\sp{k}_Ih\sp*\|_{X\sp{\frac{m}{m-k}}(0,1)}
\leq C'C\|g\|_{X(0,1)}\|{h\sp*}\sp{1-\frac km}\|_{X\sp{\frac{m}{m-k}}(0,1)}\\
&=C'C\|g\|_{X(0,1)}\|h\sp*\|_{X(0,1)}\sp{1-\frac km} =
 C'C\sp{\frac km}\|g\|_{X(0,1)}.
\end{align*}
By~\cite[Corollary~9.8]{CPS}, this is equivalent to the existence of
a~positive constant $C(m,k,X)$ such that
\[
\|H\sp{k}_Ig\|_{X\sp{\frac{m}{m-k}}(0,1)} \leq C(m,k,X)
\|g\|_{X(0,1)}
\]
for every $g\in \M_+(0,1)$. Hence, \eqref{Hbound} follows.

(iii)$\Rightarrow$(iv)\quad Fix $k\in\{1,\dots,m-1\}$ and let $f,g\in\M_+(0,1)$.
On applying first ~\eqref{E:relation-between-x-and-y} to $f$ in place of $g$,  and then~\eqref{E:relation-between-x-and-y} again, this time with $k$ replaced by $m-k$, we obtain
\[
\|f\|_{X\sp{\tfrac {m}{m-k}}(0,1)}\leq C\|f\|_{X_k(0,1)}
\quad \textup{and}\quad \|g\|_{X\sp{\tfrac mk}(0,1)}\leq C\|g\|_{X_{m-k}(0,1)}.
\]
Combining these estimates with~\eqref{E:generalized-hoelder}, with
$p=\frac{m}{m-k}$, yields
\[
\|fg\|_{X(0,1)}\leq C\sp2\|f\|_{X_k(0,1)}\|g\|_{X_{m-k}(0,1)},
\]
and (iv) follows.

(iv)$\Rightarrow$(v)\quad This implication is trivial.

(v)$\Rightarrow$(vi)\quad Let $k\in\N$, $1\leq k\leq m-1$, be such
that~\eqref{E:estimate-of-product} holds, and let $f,g\in\M_+(0,1)$.
On making use of~\eqref{E:estimate-of-product} with $H\sp k_If$ and
$H\sp{m-k}_Ig$ in the place of $f$ and $g$, respectively, we obtain
\[
\|H\sp k_If \, H\sp{m-k}_Ig\|_{X(0,1)}\leq C\|H\sp k_If\|_{X_k(0,1)}
\|H\sp{m-k}_Ig\|_{X_{m-k}(0,1)}.
\]
It follows from~\eqref{optHm} that
\[
H\sp k_I:X(0,1)\to X_k(0,1)\quad\textup{and}\quad H\sp{m-k}_I:X(0,1)\to X_{m-k}(0,1).
\]
Coupling these facts with the preceding inequality implies that
\[
\|H\sp k_If \, H\sp{m-k}_Ig\|_{X(0,1)}\leq C' \|f\|_{X(0,1)}
\|g\|_{X(0,1)}
\]
for some positive constant $C'=C'(m,k,I,X)$ but independent of
$f,g\in\M_+(0,1)$. Thus, the property (vi) follows.

(vi)$\Rightarrow$(vii)\quad  Let $k\in\N$, $1\leq k\leq m-1$, be
such that~\eqref{E:two-hardys} holds. Assume, for the time being,
that $m<2k$. On replacing, if necessary, $\|\cdot\|_{X(0,1)}$ with
the equivalent norm $C\|\cdot\|_{X(0,1)}$, we may suppose, without
loss of generality, that  $C=1$ in~\eqref{E:two-hardys}. Thus,
\begin{equation}\label{E:phu1}
\|(H\sp k_If)(H\sp{m-k}_Ig)\|_{X(0,1)}\leq \|f\|_{X(0,1)}
\|g\|_{X(0,1)}
\end{equation}
for every $f,g\in\M_+(0,1)$.
\par\noindent
Let $b>-1$. Fix $a\in(0,1]$ and $\ve\in(0,a)$. Set
\begin{equation}\label{E:def-f-g}
f(t)=\chi_{(\ve,a)}(t)\left(\int_t\sp a\frac{dr}{I(r)}\right)\sp b,
\quad g(t)=\chi_{(\ve,a)}(t)\left(\int_t\sp
a\frac{dr}{I(r)}\right)\sp{(b+k)\frac{k}{m-k}+k-m}
\end{equation}
for $t\in(0,1)$. One can verify that
\begin{equation}\label{E:fx}
H\sp k_{I}f(t)=\frac{1}{(b+1)\dots(b+k)}\left(\int_t\sp
a\frac{dr}{I(r)}\right)\sp{(b+k)} \quad \hbox{for $t\in(\ve,a)$.}
\end{equation}
Since we are assuming that  $m<2k$ and $b>-1$,
\begin{equation}\label{E:dis}
(b+k)\tfrac{k}{m-k}+k-m>(-1+k)\tfrac{k}{2k-k}+k-2k=-1.
\end{equation}
Hence, analogously to \eqref{E:fx},
\begin{equation}\label{E:gx}
H\sp{m-k}_{I}g(t)=\frac{1}{[(b+k)\frac{k}{m-k}+k-m+1]\dots[(b+k)\frac{k}{m-k}]}\left(\int_t\sp
a\frac{dr}{I(r)}\right)\sp{(b+k)\frac{k}{m-k}} \quad \hbox{for
$t\in(\ve,a)$.}
\end{equation}
Note that
\begin{equation}\label{E:tagg}
(b+k)\tfrac{k}{m-k}+k-m=b\tfrac{m-k}{k}+(b+k)\tfrac{m}{m-k}\tfrac{2k-m}{k}.
\end{equation}
Set $p=\frac{k}{m-k}$. The assumption $m<2k$ guarantees that $p>1$.
Moreover, $p'=\frac{k}{2k-m}$. Thus, inequality
~\eqref{E:generalized-hoelder}, applied to this choice of $p$, and
equation \eqref{E:tagg} tell us that
\begin{align}
\|g\|_{X(0,1)}
&=
\left\|\chi_{(\ve,a)}(t)\left(\int_t\sp a\frac{dr}{I(r)}\right)\sp{(b+k)\frac{k}{m-k}+k-m}\right\|_{X(0,1)}\label{E:hx}\\
&\leq
\left\|\chi_{(\ve,a)}(t)\left(\int_t\sp a\frac{dr}{I(r)}\right)\sp b\right\|_{X(0,1)}\sp{\frac{m-k}k}
 \left\|\chi_{(\ve,a)}(t)\left(\int_t\sp a\frac{dr}{I(r)}\right)\sp {(b+k)\frac{m}{m-k}}\right\|_{X(0,1)}\sp{\frac{2k-m}k}\nonumber\\
&= \|f\|_{X(0,1)}\sp{\frac mk-1}
\left\|\chi_{(\ve,a)}(t)\left(\int_t\sp a\frac{dr}{I(r)}\right)\sp
{(b+k)\frac{m}{m-k}}\right\|_{X(0,1)}\sp{\frac{2k-m}k}\nonumber.
\end{align}
Coupling ~\eqref{E:phu1} with~\eqref{E:hx} yields
\begin{equation}\label{E:pja}
\|H\sp k_If \, H\sp{m-k}_Ig\|_{X(0,1)}\leq
\left\|f\right\|_{X(0,1)}\sp{\frac mk}
\left\|\chi_{(\ve,a)}(t)\left(\int_t\sp a\frac{dr}{I(r)}\right)\sp
{(b+k)\frac{m}{m-k}}\right\|_{X(0,1)}\sp{\frac{2k-m}k}.
\end{equation}
By~\eqref{E:fx} and~\eqref{E:gx},
\[
H\sp k_If(t) \,H\sp{m-k}_Ig(t)=
\frac{1}{(b+1)\dots(b+k)[(b+k)\frac{k}{m-k}+k-m+1]\dots[(b+k)\frac{k}{m-k}]}\left(\int_t\sp
a\frac{dr}{I(r)}\right)\sp{(b+k)\frac{m}{m-k}}
\]
for  $t\in(\ve,a)$. On setting
\begin{equation}\label{E:B}
B(b)=(b+1)\dots(b+k)\big[(b+k)\tfrac{k}{m-k}+k-m+1]\dots[(b+k)\tfrac{k}{m-k}\big]
\end{equation}
and making use of~\eqref{E:pja}, we obtain
\begin{align}
\frac1{B(b)}\left\|\chi_{(\ve,a)}(t)\left(\int_t\sp a\frac{dr}{I(r)}\right)\sp{(b+k)\frac{m}{m-k}}\right\|_{X(0,1)}
&=
\left\|\chi_{(\ve,a)} \,H\sp k_If \, H\sp{m-k}_Ig \right\|_{X(0,1)}\label{E:wu}\\
&\leq
\left\|H\sp k_If \, H\sp{m-k}_Ig\right\|_{X(0,1)}\nonumber\\
&\leq
\left\|f\right\|_{X(0,1)}\sp{\frac mk}
\left\|\chi_{(\ve,a)}(t)\left(\int_t\sp a\frac{dr}{I(r)}\right)\sp {(b+k)\frac{m}{m-k}}\right\|_{X(0,1)}\sp{\frac{2k-m}k}\nonumber.
\end{align}
The function
\[
(0,1) \ni t\mapsto \chi_{(\ve,a)}(t)\left(\int_t\sp
a\frac{dr}{I(r)}\right)\sp{(b+k)\frac{m}{m-k}}
\]
is bounded and hence, by~\eqref{l1linf}, belongs to $X(0,1)$. Thus,
\[
\left\|\chi_{(\ve,a)}(t)\left(\int_t\sp a\frac{dr}{I(r)}\right)\sp {(b+k)\frac{m}{m-k}}\right\|_{X(0,1)}<\infty.
\]
Moreover, $1-\frac{2k-m}{k}=\frac mk-1$. Therefore,~\eqref{E:wu} yields
\[
\left\|\chi_{(\ve,a)}(t)\left(\int_t\sp a\frac{dr}{I(r)}\right)\sp{(b+k)\frac{m}{m-k}}\right\|_{X(0,1)}\sp{\frac mk-1}
\leq B(b)
\left\|f\right\|_{X(0,1)}\sp{\frac mk}.
\]
On raising this inequality to the power $\frac km$, and recalling
the definition of $f$, we get
\begin{equation}\label{E:star3}
\left\|\chi_{(\ve,a)}(t)\left(\int_t\sp a\frac{dr}{I(r)}\right)\sp{(b+k)\frac{m}{m-k}}\right\|_{X(0,1)}\sp{1-\frac km}
\leq B(b)\sp{\frac km}
\left\|\chi_{(\ve,a)}(t)\left(\int_t\sp a\frac{dr}{I(r)}\right)\sp b\right\|_{X(0,1)}.
\end{equation}
\par\noindent
Next, assume that $m=2k$. Note that \eqref{E:dis}  holds also in
this case. Let  $f$ and $g$ be defined by~\eqref{E:def-f-g} again.
Then
\[
(b+k)\tfrac{k}{m-k}+k-m=b,
\]
whence $f=g$. Moreover, since $k=m-k$, we also have that $H\sp
k_If=H\sp{m-k}_Ig$. Therefore,~\eqref{E:phu1},~\eqref{E:fx}
and~\eqref{E:gx} imply that
\[
\left\|\chi_{(\ve,a)}(t)\left(\int_t\sp a\frac{dr}{I(r)}\right)\sp{2(b+k)}\right\|_{X(0,1)}\sp{\frac12}
\leq
(b+1)\dots(b+k)
\left\|\chi_{(\ve,a)}(t)\left(\int_t\sp a\frac{dr}{I(r)}\right)\sp{b}\right\|_{X(0,1)}.
\]
The assumption $m=2k$ entails that $B(b)=[(b+1)\dots(b+k)]\sp2$, and
$\frac km=\frac 12$. Hence, \eqref{E:star3} follows. We have thus
established~\eqref{E:star3} whenever $m\leq 2k$. From now on, we
keep this assumption in force.
\par\noindent Define $b_0=0$ and, for
$j\in\N$,
\[
b_j=(b_{j-1}+k)\tfrac{m}{m-k},
\]
namely
\begin{equation}\label{E:bj}
b_j=m\big[(\tfrac{m}{m-k})\sp j-1\big].
\end{equation}
We next set, for $j\in\N$,
\[
K_j=\prod_{i=0}\sp{j-1}B(b_i)\sp{\frac km(\frac{m-k}{m})\sp i}.
\]
Let us note that the assumption $2k\geq m$ implies that
$B(b_j)\geq 1$ for $j\in\N\cup\{0\}$, and hence $K_j\geq 1$ as well.
\par\noindent
We claim that, for every $j\in\N$,
\begin{equation}\label{E:star-general-ve}
\left\|\chi_{(\ve,a)}(t)\left(\int_t\sp a\frac{dr}{I(r)}\right)\sp{b_j}\right\|_{X(0,1)}\sp{(\frac{m-k}{m})\sp j}
\leq K_j
\|\chi_{(\ve,a)}\|_{X(0,1)}.
\end{equation}
Indeed, choosing $b=0$ in~\eqref{E:star3}, yields
\eqref{E:star-general-ve} for $j=1$. Assume now
that~\eqref{E:star-general-ve} holds for some fixed $j\in\N$. Then,
by~\eqref{E:star3} with $b=b_j$,
\[
\left\|\chi_{(\ve,a)}(t)\left(\int_t\sp a\frac{dr}{I(r)}\right)\sp{b_{j+1}}\right\|_{X(0,1)}\sp{(\frac{m-k}{m})\sp{j+1}}
\leq B(b_j)\sp{\frac km(\frac{m-k}{m})\sp j}
\left\|\chi_{(\ve,a)}(t)\left(\int_t\sp a\frac{dr}{I(r)}\right)\sp{b_j}\right\|_{X(0,1)}\sp{(\frac{m-k}{m})\sp j}.
\]
Thus, by the induction assumption,
\[
\left\|\chi_{(\ve,a)}(t)\left(\int_t\sp a\frac{dr}{I(r)}\right)\sp{b_{j+1}}\right\|_{X(0,1)}\sp{(\frac{m-k}{m})\sp{j+1}}
\leq B(b_j)\sp{\frac km(\frac{m-k}{m})\sp j}K_j
\left\|\chi_{(\ve,a)}\|_{X(0,1)}=K_{j+1}\|\chi_{(\ve,a)}\right\|_{X(0,1)},
\]
and~\eqref{E:star-general-ve} follows.
\par\noindent
Letting $\ve\to0^+$ in \eqref{E:star-general-ve} and making use of
property (P3) of rearrangement-invariant function norms yields
\begin{equation}\label{E:star-general}
\left\|\chi_{(0,a)}(t)\left(\int_t\sp a\frac{dr}{I(r)}\right)\sp{b_j}\right\|_{X(0,1)}\sp{(\frac{m-k}{m})\sp j}
\leq K_j
\|\chi_{(0,a)}\|_{X(0,1)}.
\end{equation}
\par\noindent
Fix $j\in\N$, and set
\[
g_j(t)=\chi_{(0,a)}(t)\left(\int_t\sp
a\frac{dr}{I(r)}\right)\sp{b_{j}} \quad \hbox{for $t \in (0,1)$.}
\]
Then, owing to~\eqref{E:star-general}
\begin{equation}\label{E:pess}
\|g_j\|_{X(0,1)}\sp{(\frac{m-k}{m})\sp{j}}\leq K_j\|\chi_{(0,a)}\|_{X(0,1)}.
\end{equation}
By the H\"older inequality
and~\eqref{E:product-of-fundamental-functions},
\[
\int_0\sp a g_j(t)\,dt
\leq
\|g_j\|_{X(0,1)}\|\chi_{(0,a)}\|_{X'(0,1)}
=
\frac{a\|g_j\|_{X(0,1)}}{\|\chi_{(0,a)}\|_{X(0,1)}}.
\]
By~\eqref{E:pess},
\begin{align*}
\frac 1a\int_0\sp a g_j(t)\,dt
\leq K_j\sp{\frac1{(\frac{m-k}{m})\sp
j}}\|\chi_{(0,a)}\|_{X(0,1)}\sp{\frac{1-(\frac{m-k}{m})\sp
j}{(\frac{m-k}{m})\sp j}}.
\end{align*}
We have thus shown that
\begin{equation}\label{E:psi1}
\left(\frac 1a\int_0\sp a
g_j(t)\,dt\right)\sp{\frac{(\frac{m-k}{m})\sp
j}{1-(\frac{m-k}{m})\sp j}} \leq
K_j\sp{\frac{1}{1-(\frac{m-k}{m})\sp j}}\|\chi_{(0,a)}\|_{X(0,1)}
\end{equation}
for  $j\in\N$.
\par\noindent
Observe that
\begin{align}
\lim_{j\to\infty}\left(\frac 1a\int_0\sp a g_j(t)\,dt\right)\sp{\frac{(\frac{m-k}{m})\sp j}{1-(\frac{m-k}{m})\sp j}}
&=
\lim_{j\to\infty}\left(\frac 1a\int_0\sp a \left(\int_t\sp a\frac{dr}{I(r)}\right)\sp{b_j}\,dt\right)
\sp{\frac{1}{(\frac{m}{m-k})\sp j-1}}\label{E:psi2}\\
&=
\lim_{j\to\infty}\left(\frac 1a\int_0\sp a \left(\int_t\sp a\frac{dr}{I(r)}\right)\sp{m((\frac{m}{m-k})\sp j-1)}\,dt\right)\sp{\frac{1}{(\frac{m}{m-k})\sp j-1}}\nonumber\\
&=
\left\|\chi_{(0,a)}(t)\left(\int_t\sp a\frac{dr}{I(r)}\right)\sp{m}\right\|_{L\sp{\infty}(0,1)}
=\left(\int_0\sp a\frac{dr}{I(r)}\right)\sp{m}.\nonumber
\end{align}
On the other hand, since $\frac{m-k}m<1$,
\[
1-\left(\tfrac{m-k}m\right)\sp j\geq 1-\tfrac{m-k}m=\tfrac km
\]
 for  $j\in\N$. As observed above, $K_j\geq 1$ for every $j\in\N$.
By~\eqref{E:B}, if $b>-1$, then
\[
B(b)\leq(b+k)\sp{k}((b+k)\tfrac
k{m-k})\sp{m-k}\leq(b+k)\sp{m}(\tfrac m{m-k})\sp{m}.
\]
With the  choice $b=b_j$,  the last chain and~\eqref{E:bj} yield
\[
B(b_j)\leq (m((\tfrac{m}{m-k})\sp j-1)+k)\sp{m}(\tfrac
m{m-k})\sp{m}\leq (m(\tfrac{m}{m-k})\sp{j+1})\sp{m}
\]
for  $j\in\N\cup\{0\}$. Altogether, we deduce that
\[
K_j\sp{\frac{1}{1-(\frac{m-k}{m})\sp j}}\leq K_j\sp{\frac mk}
=\prod_{i=0}\sp{j-1}B(b_i)\sp{(\frac{m-k}{m})\sp i} \leq
\prod_{i=0}\sp{\infty}\left(\left(m\left(\tfrac{m}{m-k}\right)\sp
{i+1}\right)\sp m\right)\sp{(\frac{m-k}{m})\sp i}<\infty.
\]
On setting
\[
K=\prod_{i=0}\sp{\infty}\left(\left(m\left(\tfrac{m}{m-k}\right)\sp
{i+1}\right)\sp m\right)\sp{(\frac{m-k}{m})\sp i},
\]
we have that
\begin{equation}\label{E:psi3}
\limsup_{j\to\infty}K_j\sp{\frac{1}{1-(\frac{m-k}{m})\sp j}}\leq K.
\end{equation}
Therefore, combining ~\eqref{E:psi1}, \eqref{E:psi2}
and~\eqref{E:psi3} tells us that
\[
\left(\int_0\sp a\frac{dr}{I(r)}\right)\sp{m}\leq K \|\chi_{(0,a)}\|_{X(0,1)}.
\]
Hence, inequality~\eqref{E:fundamental-estimate} follows. Thus,
property  (vii) is proved when $m\leq 2k$. However, if this is not
the case, then  $m\leq2(m-k)$. The same argument as above, applied
with  $m-k$ in the place of $k$, leads to the conclusion.

(vii)$\Rightarrow$(i)\quad Let $g\in\M(0,1)$. Then,
by~\eqref{E:fundamental-estimate},
\[
\sup_{t\in(0,1)}g\sp{**}(t){\psi _I}(t)\leq
C\sup_{t\in(0,1)}g\sp{**}(t)\varphi_X(t).
\]
One has that
\[
\sup_{t\in(0,1)}g\sp{**}(t)\varphi_X(t)
\leq \|g\|_{X(0,1)}
\]
for every $g\in\M(0,1)$ (see e.g. It is a~classical fact (see
e.g.~\cite[Chapter~2, Proposition~5.9]{BS}). Combining the last two
estimates yields
\[
\sup_{t\in(0,1)}g\sp{**}(t){\psi _I}(t)\leq C\|g\|_{X(0,1)}
\]
for $ g\in\M(0,1)$, namely \eqref{E:embedding-to-marcinkiewicz}.
\end{proof}

{We conclude this section by showing
that assumption \eqref{E:psi-in-associate} is actually essentially
stronger than \eqref{E:fundamental-estimate-first}. }

\begin{prop}\label{R:psi-implies-fundamental}
{Let $m,n\in\N$, $n\geq 2$, $m\geq2$,
and let $\|\cdot\|_{X(0,1)}$ be a~rearrangement-invariant function
norm. Assume that $I$ is a positive non-decreasing function on
$(0,1)$. If ~\eqref{E:psi-in-associate} holds, then
~\eqref{E:fundamental-estimate-first} holds as well.}
\end{prop}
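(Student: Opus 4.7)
The plan is to exploit the elementary calculus identity
\[
\left(\int_0^t \frac{ds}{I(s)}\right)^m \;=\; m\int_0^t \frac{1}{I(s)}\left(\int_0^s \frac{dr}{I(r)}\right)^{m-1}\,ds,
\]
which expresses the quantity appearing in \eqref{E:fundamental-estimate-first} as the integral on $(0,t)$ of precisely the function whose $X'(0,1)$-norm is hypothesized to be finite in \eqref{E:psi-in-associate}. This reduces the whole proposition to a single application of the Hölder inequality in rearrangement-invariant spaces.

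More concretely, I would first rewrite the integral above as $\int_0^1 \chi_{(0,t)}(s) \cdot \frac{1}{I(s)}\bigl(\int_0^s \frac{dr}{I(r)}\bigr)^{m-1}\,ds$ and apply the Hölder inequality \eqref{E:holder} (on the interval $(0,1)$) with the pair $X(0,1)$, $X'(0,1)$. This yields
\[
\left(\int_0^t \frac{ds}{I(s)}\right)^m \leq m\,\|\chi_{(0,t)}\|_{X(0,1)}\,\left\|\chi_{(0,t)}(s)\frac{1}{I(s)}\left(\int_0^s \frac{dr}{I(r)}\right)^{m-1}\right\|_{X'(0,1)}.
\]
The first factor is exactly $\varphi_X(t)$ by \eqref{fund}, while the second factor is dominated, via property (P2) of a function norm, by
\[
C := \left\|\frac{1}{I(s)}\left(\int_0^s \frac{dr}{I(r)}\right)^{m-1}\right\|_{X'(0,1)},
\]
which is finite by \eqref{E:psi-in-associate}. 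Dividing by $\varphi_X(t)$ gives $\frac{1}{\varphi_X(t)}\bigl(\int_0^t \frac{ds}{I(s)}\bigr)^m \leq mC$ uniformly in $t\in(0,1)$, which is \eqref{E:fundamental-estimate-first}.

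There is essentially no obstacle here: the proof is a one-line Hölder estimate once the correct antiderivative identity is spotted. The only subtlety worth checking is that the integrand $\frac{1}{I}\bigl(\int_0^\cdot \frac{dr}{I(r)}\bigr)^{m-1}$ is truly the derivative of $\frac{1}{m}\bigl(\int_0^\cdot \frac{dr}{I(r)}\bigr)^m$, which requires only that $\int_0 \frac{dr}{I(r)}<\infty$; but this convergence is itself implied by \eqref{E:psi-in-associate} (via Remark \ref{conv-int-I}, using $m\geq 2$), so the identity and the subsequent estimate are legitimate.
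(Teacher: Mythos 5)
Your proof is correct and is essentially identical to the paper's: both rest on the identity $\bigl(\int_0^t \frac{ds}{I(s)}\bigr)^m = m\int_0^t \frac{1}{I(s)}\bigl(\int_0^s \frac{dr}{I(r)}\bigr)^{m-1}\,ds$ followed by a single application of the H\"older inequality against $\chi_{(0,t)}$, recognizing $\|\chi_{(0,t)}\|_{X(0,1)}=\varphi_X(t)$. Your additional remark on why the antiderivative identity is legitimate (finiteness of $\int_0 \frac{dr}{I(r)}$ via Remark \ref{conv-int-I}) is a harmless extra check that the paper leaves implicit.
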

\begin{proof}
{By the H\"older inequality in
rearrangement-invariant spaces,
\begin{align*}
\bigg(\int_0\sp t \frac{ds}{I(s)}\bigg)\sp m & = m\int_0\sp
t\frac{1}{I(s)}\left(\int_0\sp
s\frac{dr}{I(r)}\right)\sp{m-1}\,ds\leq
m\left\|\frac1{I(s)}\left(\int_0\sp
s\frac{dr}{I(r)}\right)\sp{m-1}\right\|_{X'(0,1)}\|\chi_{(0,t)}\|_{X(0,1)}
\\ & = m \left\|\frac1{I(s)}\left(\int_0\sp
s\frac{dr}{I(r)}\right)\sp{m-1}\right\|_{X'(0,1)}\varphi_X(t) \qquad
\hbox{for $t \in (0, 1)$.}
\end{align*}
Hence, the assertion follows.}
\end{proof}

\begin{remark}\label{stronger}
It is easily  seen
that~\eqref{E:psi-in-associate} is in fact essentially stronger
than~\eqref{E:fundamental-estimate-first}, in general. Indeed, let
$I(t)=t\sp{\alpha}$ for some $\alpha\in\R$ such that
$\alpha\geq\frac1{n'}$ and $\alpha>\frac1{m'}$, and let
$\|\cdot\|_{X(0,1)}=\|\cdot\|_{L\sp q(0,1)}$ with
$q=\frac1{m(1-\alpha)}$. Then~\eqref{E:fundamental-estimate-first}
holds but~\eqref{E:psi-in-associate} does not. In other words, by
Theorems~\ref{T:algebra-for-W} and~\ref{T:algebra-restricted},
$\mathcal V\sp m L\sp{\frac1{m(1-\alpha)}}(\Omega)$ is not a~Banach
algebra for every $\Omega\in \mathcal J_{\alpha}$, yet it
satisfies~\eqref{E:corollary-lower} for every $\Omega\in \mathcal
J_{\alpha}$.
\end{remark}

\section{Proofs of the main results}\label{S:proof-of-main-theorem}

Here, we accomplish the proofs of the   results stated  in Section
\ref{S:intro}. \\ A result to be exploited in our proofs is an
embedding theorem for the space $\VV\sp mX(\Omega)$, which tells us
that, under  assumption~\eqref{E:2I},
\begin{equation}\label{E:optimal-Sobolev-embedding}
V\sp mX(\Omega)\to X_m(\Omega)
\end{equation}
where $ X_m(\Omega)$ is the rearrangement-invariant space built upon
the function norm $\|\cdot\|_{X_m(0,1)}$ given by \eqref{optHm}, and
that $X_m(\Omega)$ is the optimal (smallest) such
rearrangement-invariant space  \cite[Theorem~5.4]{CPS} (see also
~\cite{EKP}, \cite{Ci1} and \cite{T2} for earlier proofs in special
cases).

The next three lemmas are devoted to  certain ``worst possible''
domains whose isoperimetric function has a prescribed decay. Such
domains will be of use in the proof of the necessity of our
conditions in the main results.

\begin{lemma}\label{L:lemma}
Let $n\in\N$, $n\geq 2$, and let $I$ be a positive , non-decreasing
function satisfying~\eqref{E:I-divided-by-power-increasing}. Then
there exists a positive non-decreasing function $\widehat I$ in
$(0,1)$  such that $\widehat I\in C\sp 1(0,1)$, ${\widehat
I}\sp{n'}$ is convex on $(0,1)$, and
\begin{equation}\label{equival}
\widehat I (s)\approx I(s) \quad \hbox{for $s \in (0,1)$.}
\end{equation}
\end{lemma}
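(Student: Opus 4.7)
The plan is to reduce to the case where $I(t)/t^{1/n'}$ is genuinely non-decreasing, and then to produce $\widehat I$ whose $n'$-th power is the antiderivative of a continuous non-decreasing function; this will automatically force $\widehat I^{n'}$ to be convex and $\widehat I$ to be $C^1$.

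First I would use \eqref{E:I-divided-by-power-increasing} to pick a non-decreasing $J$ on $(0,1)$ with $J(t)\approx I(t)/t^{1/n'}$, and replace $I$ by $I_1(t):=t^{1/n'}J(t)$. Then $I_1$ is non-decreasing (product of non-decreasing positive factors) and $I_1\approx I$, so by transitivity of $\approx$ it suffices to construct $\widehat I$ for $I_1$. I may therefore assume from the outset that $I$ is non-decreasing on $(0,1)$ and that the function $g(t):=I(t)^{n'}/t$ is itself positive and non-decreasing.

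Next I would smooth $g$: let $\tilde g$ be the piecewise linear interpolant of $g$ at the dyadic points $\{2^{-k}\}_{k\in\N}$. Then $\tilde g$ is continuous, positive, and non-decreasing on $(0,1)$, with $g(t/2)\le\tilde g(t)\le g(2t)$ on $(0,1/2)$. I would then set
\[
\widehat I(t)=\left(\int_0^t\tilde g(s)\,ds\right)^{1/n'},\qquad t\in(0,1).
\]
By construction $\widehat I^{n'}$ is $C^1$ with non-decreasing derivative $\tilde g$, hence convex; and since $\widehat I^{n'}>0$ on $(0,1)$ while $x\mapsto x^{1/n'}$ is $C^1$ on $(0,\infty)$, the function $\widehat I$ itself is $C^1$ and non-decreasing on $(0,1)$.

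Finally, for the equivalence $\widehat I\approx I$, I would exploit monotonicity of $g$ and the bounds on $\tilde g$ to compare $\int_0^t\tilde g(s)\,ds$ with $tg(t)=I(t)^{n'}$, obtaining
\[
\widehat I(t)^{n'}\le t\,\tilde g(t)\le t\,g(2t)=\tfrac12 I(2t)^{n'},\qquad \widehat I(t)^{n'}\ge \tfrac t2\,\tilde g(\tfrac t2)\ge \tfrac t2\,g(\tfrac t4)=2\,I(\tfrac t4)^{n'}.
\]
These translate into estimates of the form $c\,I(ct)\le\widehat I(t)\le C\,I(Ct)$. The main point to handle carefully is this last step: the equivalence relation in the paper ties the multiplicative factors and the argument shifts to a single pair of constants, and so one has to absorb the two separate bounds $(\tfrac12)^{1/n'}$ and $2^{1/n'}$, as well as the shifts by $1/4$ and $2$, into a single $(c,C)$ pair, which is done using monotonicity of $I$ (shrinking $c$ or enlarging $C$ as needed). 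I expect this constant-matching, together with the reduction step via \eqref{E:I-divided-by-power-increasing}, to be the only real bookkeeping; the analytic content (convexity and $C^1$ regularity) comes essentially for free from writing $\widehat I^{n'}$ as the antiderivative of a continuous non-decreasing function.
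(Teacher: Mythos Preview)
Your proposal is correct and follows the same core idea as the paper: define $\widehat I^{n'}$ as the antiderivative of a continuous non-decreasing function that is equivalent to $I(t)^{n'}/t$, so that convexity and $C^1$ regularity come for free. The only genuine difference is in how that continuous function is manufactured. The paper avoids your dyadic interpolation by iterating the integration: it first sets $I_1(s)^{n'}=\int_0^s\varsigma(r)\,dr$ with $\varsigma$ the non-decreasing function from~\eqref{E:I-divided-by-power-increasing}, which makes $I_1$ continuous, and then observes that $I_1(s)^{n'}/s$ is automatically continuous and non-decreasing (being the running average of a non-decreasing function), so a second integration $\widehat I(s)^{n'}=\int_0^s I_1(r)^{n'}/r\,dr$ yields the $C^1$ result directly. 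This two-step averaging is a bit cleaner than your piecewise-linear interpolant because it sidesteps the dyadic constant-matching and the need to extend $\tilde g$ past $t=1/2$, but your route is equally valid and the bookkeeping you flag is indeed routine.
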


\begin{proof}
By~\eqref{E:I-divided-by-power-increasing},  there exists
a~non-decreasing function $\varsigma : (0,1) \to (0,\infty)$ such
that
\begin{equation}\label{equival0}
\frac{I(s)\sp{n'}}{s} \approx \varsigma (s) \qquad \hbox{for $s \in
(0, 1)$}.
\end{equation}
Set
\[
I_1(s)=\bigg(\int_0^s\varsigma (r)\,dr\bigg)^{1/n'} \qquad \hbox{for
$s \in (0, 1)$}.
\]
Then $I_1 \in C^0(0,1)$, and $I_1^{n'}$ is convex in $(0, 1)$.
Moreover, we claim that
\begin{equation}\label{equival2}
 I_1(s) \approx I(s) \qquad \hbox{for $s \in
(0, 1)$}.
\end{equation}
Indeed, by the monotonicity of $\varsigma$,
\begin{align}\label{equival3}
\tfrac 12 \varsigma (s/2) \leq \frac 1s \int _{s/2}^s\varsigma(r)dr
\leq \frac{I_1(s)^{n'}}{s} = \frac 1s \int _0^s \varsigma(r)dr  \leq
\varsigma (s) \qquad \hbox{for $s \in (0, 1)$}.
\end{align}
Equation \eqref{equival2} follows from \eqref{equival0} and
\eqref{equival3}. Similarly, on setting
$$
\widehat I (s) = \bigg(\int _0^s \frac{I _1(r)^{n'}}{r }
dr\bigg)^{1/n'} \qquad \hbox{for $s \in (0, 1)$},
$$
one has that $\widehat I \in C^1(0, 1)$, $\widehat I ^{n'}$ is
convex in $(0, 1)$, and
\begin{equation}\label{equival4}
\widehat I (s)\approx I_1 (s)  \qquad \hbox{for $s \in (0, 1)$.}
\end{equation}
Coupling \eqref{equival2} with \eqref{equival4} yields
\eqref{equival}. Thus, the function $\widehat I$ has the required
properties.
\end{proof}

\begin{lemma}\label{auxdiv}
Let $n\in\N$, $n\geq2$, and let  $I$ be a positive, non-decreasing
function on $(0,1)$
satisfying~\eqref{E:I-divided-by-power-increasing}.  Then there
exist $L \in (0, \infty]$ and a convex function $\eta : (0, L) \to
(0, \infty )$ such that the domain $\Omega_I \subset \R\sp n$,
defined by
\begin{equation}\label{omegadiv}
\Omega_I = \{ x\in\R\sp n\colon x=(x',x_n),\ x_n\in(0,L),\
x'\in\mathbb R^{n-1},\ |x'| < \eta (x_n)\},
\end{equation}
satisfies $|\Omega _I|=1$ and
\begin{equation}\label{auxdiv1}
I_{\Omega _I} (s) \approx I(s) \quad \hbox{for $s \in (0,\tfrac
12]$.}
\end{equation}
\end{lemma}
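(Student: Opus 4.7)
\emph{Plan.} The strategy is to construct $\Omega_I$ as a rotationally symmetric horn whose profile $\eta$ is calibrated so that the horizontal slabs $\Omega_I \cap \{x_n > t\}$ realize, up to absolute constants, the prescribed relative isoperimetric profile. First, by Lemma~\ref{L:lemma}, we may replace $I$ by an equivalent function $\widehat I \approx I$ of class $C^1$ with $\widehat I^{n'}$ convex; this extra regularity is precisely what will force $\eta$ to be convex.

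Let $\omega_{n-1}$ denote the Lebesgue measure of the unit ball in $\mathbb R^{n-1}$, and set $L := \int_0^1 du/\widehat I(u) \in (0, \infty]$. Define $W \colon (0, L) \to (0, 1)$ as the strictly decreasing function satisfying $W'(t) = -\widehat I(W(t))$ and $W(0)=1$, equivalently by the implicit relation $\int_{W(t)}^1 du/\widehat I(u) = t$; note that $\lim_{t\to L} W(t) = 0$. Set
\[
\eta(t) := \bigl(\widehat I(W(t))/\omega_{n-1}\bigr)^{1/(n-1)}, \qquad t \in (0, L),
\]
and define $\Omega_I$ by~\eqref{omegadiv}. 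Then $\omega_{n-1}\eta(t)^{n-1} = \widehat I(W(t)) = -W'(t)$, so $|\Omega_I| = \omega_{n-1}\int_0^L \eta^{n-1}\,dt = 1$; moreover the slab $E_t := \Omega_I \cap \{x_n > t\}$ has measure $W(t)$ and relative perimeter $\omega_{n-1}\eta(t)^{n-1} = \widehat I(W(t))$, giving the easy upper bound $I_{\Omega_I}(s) \leq \widehat I(s)$ for all $s \in (0, 1)$.

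To verify convexity of $\eta$, write $\eta = \zeta \circ W$ with $\zeta(W) := (\widehat I(W)/\omega_{n-1})^{1/(n-1)}$. The chain rule and $W' = -\widehat I(W)$ give $\eta'(t) = -\zeta'(W(t))\widehat I(W(t))$, while a direct computation, using $n' - 1 = 1/(n-1)$, yields
\[
\zeta'(W)\widehat I(W) = \frac{1}{n\,\omega_{n-1}^{1/(n-1)}}\,(\widehat I^{n'})'(W).
\]
By the convexity of $\widehat I^{n'}$ the right-hand side is non-decreasing in $W$; since $W$ is decreasing in $t$, the function $\eta'(t)$ is non-decreasing in $t$, so $\eta$ is convex.

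The substantive step, and the main obstacle, is the matching lower bound $I_{\Omega_I}(s) \geq c\,\widehat I(cs)$ for $s \in (0, 1/2]$. It amounts to showing that horizontal slabs are, up to absolute multiplicative and dilation constants, optimal competitors in the relative isoperimetric problem on $\Omega_I$; this is the content of Maz'ya's isoperimetric theory for rotationally symmetric domains with convex cross-section profile (see \cite[Ch.~5]{Mabook}). Combined with the easy upper bound and the equivalence $\widehat I \approx I$, it delivers~\eqref{auxdiv1}.
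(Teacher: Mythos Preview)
Your proposal is correct and follows essentially the same construction as the paper's proof: your function $W$ is the paper's $M$, your $\eta$ coincides with the paper's, and both arguments reduce the two-sided isoperimetric estimate~\eqref{auxdiv1} to Maz'ya's results on rotationally symmetric domains with convex profile. The paper is slightly more precise in citing \cite[Examples~5.3.3.1 and~5.3.3.2]{Mabook} (according to whether $L<\infty$ or $L=\infty$) and in checking the technical hypothesis $\eta'(0^+)>-\infty$ required for those examples to apply.
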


\begin{proof}
By Lemma~\ref{L:lemma}, we can assume with no loss of generality
that $I \in C^1(0, 1)$ and $I ^{n'}$ is convex in $(0, 1)$. Let $L
\in (0, \infty]$ be defined by
\begin{equation}\label{E:L}
L = \int _0^{1} \frac {dr}{I(r)},
\end{equation}
 and let  $M : [0,L) \to (0, 1]$ be the function implicitly
defined as
\begin{equation}\label{auxdiv4}
\int _{M(t)}^1 \frac {dr}{I(r)} = t \qquad \hbox{for $t \in [0, L
)$.}
\end{equation}
The function $M$ strictly decreases from $1$ to $0$. In particular,
$M$ is continuously differentiable in $(0, L)$, and
\begin{equation}\label{auxdiv5}
I (M(r)) = - M'(r) \qquad \hbox{for $r\in (0, L)$}\,.
\end{equation}
 On defining
 $\eta : (0, L) \to (0, \infty )$ as
\begin{equation}\label{eta}
\eta (r) = \bigg(\frac {I(M(r))}{\omega _{n-1}}\bigg)^{\frac 1{n-1}}
\qquad \hbox{for $r \in (0, L)$,}
\end{equation}
where $\omega_{n-1}$ is the volume of the $(n-1)$-dimensional unit
ball, we have that $\eta (r ) >0 $ for $r \in (0, L )$,  and, by
\eqref{auxdiv5},
\begin{equation}\label{auxdiv7}
|\{x \in \Omega _I: t < x_n < L\}|= \omega _{n-1} \int _t^L \eta
(r)^{n-1}\, dr = \int _t^L I(M(r))\, dr = \int _t^L - M'(r)\, dr =
M(t) \quad \hbox{for $t \in (0, L)$.}
\end{equation}
Moreover, $\eta$ is convex. To see this, notice that,
by~\eqref{auxdiv5},
\begin{align}\label{auxdiv6}
\omega _{n-1}^{\frac 1{n-1}} \eta '(r) = \big(I (M(r))^{\frac
1{n-1}}\big)'
& = \frac {1}{n-1}I '(M(r))I (M(r))^{\frac1{n-1}-1}M'(r)\\
\nonumber & =  - \frac {1}{n-1}I '(M(r))I (M(r))^{\frac 1{n-1}}
\quad \hbox{for $r \in (0, L)$.}
\end{align}
Thus, since $M(r)$ is decreasing, $\eta '(r)$ is increasing if and
only if $I '(s)I (s)^{\frac 1{n-1}}$ is increasing, and this is in
turn equivalent to the convexity of $I (s)^{n'}$. Equation
\eqref{auxdiv6} also tells us that
\[
\eta ' (0^+)= -\frac1{n-1}\omega_{n-1}^{-\frac
1{n-1}}I'(1^-)I(1^-)^{\frac 1{n-1}}
>- \infty.
\]
\\ By \eqref{auxdiv7}, with $t=0$, the set $\Omega _I$ as in \eqref{omegadiv},
with $\eta$ given by \eqref{eta}, satisfies $|\Omega _I|=1$.
Furthermore, owing to the properties of $\eta$, from either
\cite[Example~5.3.3.1]{Mabook} or \cite[Example~5.3.3.2]{Mabook},
according to whether $L<\infty$ or $L=\infty$, we  infer that there
exist positive constants $C_1$ and $C_2$ such that
\begin{equation}\label{auxdiv9}
C_1\, \eta (M^{-1}(s))^{n-1} \leq I_{\Omega _I} (s) \leq C_2\, \eta
(M^{-1}(s))^{n-1} \qquad \hbox{for $s \in (0, \frac 12].$}
\end{equation}
Hence, by \eqref{eta},
\begin{equation}\label{auxconv8}
\frac{C_1}{\omega _{n-1}}\, I(s) \leq I_{\Omega_I} (s) \leq
\frac{C_2}{\omega _{n-1}} \, I(s) \qquad \hbox{for $s \in (0, \frac
12],$}
\end{equation}
and \eqref{auxdiv1} follows.
\end{proof}

\begin{lemma}\label{R:remark}
Let $n\in\N$, $n\geq 2$. Let $I$ be a~positive non-decreasing
function on $(0,1)$ such that  $I \in C^1(0, 1)$ and $I ^{n'}$ is
convex in $(0, 1)$. Let $L$, $M$, $\eta$ and $\Omega_I$ be as in
Lemma \ref{auxdiv}.
Given $h\in\M_+(0,1)$, let $F:\Omega_I\to[0,\infty)$ be the function
defined by
\[
F(x)=h(M(x_n))\quad \textup{for}\ x\in\Omega_I.
\]
Then
\begin{equation}\label{E:starr}
F\sp*(t)=h\sp*(t)\quad \textup{for}\ t\in(0,1).
\end{equation}
\end{lemma}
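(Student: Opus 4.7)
The plan is to show that $F$ and $h$ have the same distribution function, from which equimeasurability (and hence $F^* = h^*$) follows at once. The geometry of $\Omega_I$ is perfectly adapted to this: the cross-section at height $x_n$ is an $(n-1)$-dimensional ball of radius $\eta(x_n)$, so by Fubini
\[
|\{x\in\Omega_I : F(x)>\lambda\}| = \omega_{n-1}\int_{\{r\in(0,L)\,:\,h(M(r))>\lambda\}} \eta(r)^{n-1}\,dr.
\]

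Next, I would rewrite the integrand using the explicit formula for $\eta$. By the definition~\eqref{eta}, $\omega_{n-1}\eta(r)^{n-1} = I(M(r))$, and by~\eqref{auxdiv5}, $I(M(r)) = -M'(r)$. Therefore
\[
|\{x\in\Omega_I : F(x)>\lambda\}| = \int_{\{r\in(0,L)\,:\,h(M(r))>\lambda\}} (-M'(r))\,dr.
\]
Now I would perform the substitution $s=M(r)$. Since $M\in C^1(0,L)$ is strictly decreasing from $1$ to $0$ and $ds=M'(r)\,dr$, the set $\{r\in(0,L):h(M(r))>\lambda\}$ is the preimage under $M$ of $E_\lambda:=\{s\in(0,1):h(s)>\lambda\}$, and the orientation-reversal is absorbed by the minus sign in $-M'(r)\,dr$. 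Thus the last integral equals $\int_{E_\lambda}ds = |E_\lambda|$, so
\[
|\{x\in\Omega_I : F(x)>\lambda\}| = |\{s\in(0,1):h(s)>\lambda\}|\qquad\text{for every } \lambda>0.
\]

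Finally, since $F$ and $h$ are nonnegative and have equal distribution functions, their decreasing rearrangements coincide, which is~\eqref{E:starr}. I do not anticipate a real obstacle here; the only mildly delicate point is to justify the change of variable $s=M(r)$ for an arbitrary measurable level set $E_\lambda\subset(0,1)$, which is handled either by the standard $C^1$-diffeomorphism change-of-variables formula (noting $M$ is a $C^1$ bijection from $(0,L)$ onto $(0,1)$) or, equivalently, by observing that the pushforward of the weighted Lebesgue measure $-M'(r)\,dr$ under $M$ is precisely one-dimensional Lebesgue measure on $(0,1)$.
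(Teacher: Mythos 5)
Your proposal is correct and follows essentially the same route as the paper: Fubini over the cross-sections of $\Omega_I$, the identity $\omega_{n-1}\eta(r)^{n-1}=I(M(r))=-M'(r)$ from \eqref{eta} and \eqref{auxdiv5}, and the change of variables $s=M(r)$ to identify the distribution functions of $F$ and $h$. The only cosmetic difference is that you invoke \eqref{auxdiv5} before the substitution while the paper does so after; the argument is the same.
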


\begin{proof}
On making use of~\eqref{eta}, of a change of variables and
of~\eqref{auxdiv5}, we obtain that
\begin{align*}
|\{x\in\Omega_I\colon F(x)>\lambda\}| &= |\{x\in\Omega_I\colon
h(M(x_n))>\lambda\}| =
\omega_{n-1}\int_{\{r\in(0,L)\colon h(M(r))>\lambda\}}\eta(r)\sp{n-1}\,dr\\
&=
\int_{\{r\in(0,L)\colon h(M(r))>\lambda\}}I(M(r))\,dr
=
\int_{\{t\in(0,1)\colon h(t)>\lambda\}}-\frac{I(t)}{M'(M\sp{-1}(t))}\,dt\\
&= |\{t\in(0,1)\colon h(t)>\lambda\}| \qquad \hbox{for $\lambda>0$.}
\end{align*}
Equation \eqref{E:starr} hence  follows, via the definition of the
decreasing rearrangement.
\end{proof}

\begin{prop}\label{T:embedding-to-linfty}
Assume that $m,n\in\N$, $n\geq 2$, and let $I$ be a positive
 non-decreasing function
satisfying~\eqref{E:I-divided-by-power-increasing}. Let
$\|\cdot\|_{X(0,1)}$ be a~rearrangement-invariant~function norm.
Assume that for every domain $\Omega\in\mathcal J_I$, the Sobolev
space $\VV\sp{m}X(\Omega)$ is a~Banach algebra. Then
condition~\eqref{E:psi-in-associate} holds. Moreover,
\begin{equation}\label{E:embedding-to-linfty}
\VV\sp{m}X(\Omega)\hra L\sp{\infty}(\Omega)
\end{equation}
for every such domain $\Omega$.
\end{prop}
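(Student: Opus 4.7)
The plan is to proceed in three stages: first, derive the $L^\infty$ embedding in the conclusion directly from the Banach algebra hypothesis by a standard iteration; second, specialize to the model domain $\Omega_I$ constructed in Lemma~\ref{auxdiv}; and third, invoke the optimality of the Sobolev embedding \eqref{E:optimal-Sobolev-embedding} together with a dual testing argument to recover~\eqref{E:psi-in-associate}.

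For the first stage, fix $\Omega \in \mathcal J_I$ and let $C$ denote the Banach algebra constant. For $u\in \VV^m X(\Omega)$, iterating \eqref{algebra} gives $\|u^k\|_{\VV^m X(\Omega)} \leq C^{k-1}\|u\|_{\VV^m X(\Omega)}^k$ for every $k\in\N$. Restricting to the ball $B$ appearing in \eqref{E:norm-in-vdot} yields $\|u^k\|_{L^1(B)}^{1/k} \leq C^{(k-1)/k}\|u\|_{\VV^m X(\Omega)}$; passing $k\to\infty$ converts the $L^1$-mean of $u^k$ into the essential supremum of $u$, so $\|u\|_{L^\infty(B)} \leq C\|u\|_{\VV^m X(\Omega)}$. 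Replacing $B$ by any other ball $B'\subset \Omega$ gives an equivalent norm, and the equivalence constant raised to the power $1/k$ tends to $1$, so the same bound holds on $B'$ with the same $C$. Since every point of $\Omega$ lies in such a ball, this establishes $\VV^m X(\Omega) \hookrightarrow L^\infty(\Omega)$, proving the second assertion of the proposition.

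For the second stage, apply this to the specific domain $\Omega_I$ from Lemma~\ref{auxdiv}, which lies in $\mathcal J_I$ by \eqref{auxdiv1}. Since $V^m X(\Omega_I) \hookrightarrow \VV^m X(\Omega_I) \hookrightarrow L^\infty(\Omega_I)$, and since assumption \eqref{E:I-divided-by-power-increasing} combined with $I(s)\gtrsim s^{1/n'}$ yields \eqref{E:2I}, the optimal embedding \eqref{E:optimal-Sobolev-embedding} applies and the optimality of $X_m$ among rearrangement-invariant targets forces $X_m(\Omega_I) \hookrightarrow L^\infty(\Omega_I)$. By duality of rearrangement-invariant function norms, this is equivalent to $L^1(0,1) \hookrightarrow X_m'(0,1)$, i.e.\ the existence of a constant $c$ with $\|g\|_{X_m'(0,1)} \leq c\|g\|_{L^1(0,1)}$ for every $g\in\M_+(0,1)$.

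For the final stage, test this last inequality with $g_\varepsilon^*(s) = \frac{1}{\varepsilon}\chi_{(0,\varepsilon)}(s)$, so that $\|g_\varepsilon\|_{L^1(0,1)}=1$. Using formula \eqref{E:optimal-hardy-embedding}, and observing that, for each fixed $t$, the function $s\mapsto f(s):=\left(\int_s^t dr/I(r)\right)^{m-1}$ is non-increasing, the averages $\frac{1}{\varepsilon}\int_0^{\min(t,\varepsilon)} f(s)\,ds$ increase monotonically to $f(0)$ as $\varepsilon\to 0^+$. Property (P3) of rearrangement-invariant function norms then gives
\[
\left\|\frac{1}{I(t)}\left(\int_0^t \frac{dr}{I(r)}\right)^{m-1}\right\|_{X'(0,1)}
= \lim_{\varepsilon\to 0^+} \|g_\varepsilon\|_{X_m'(0,1)} \leq c < \infty,
\]
which is precisely \eqref{E:psi-in-associate}. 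The main delicate point I expect will be the uniform-in-$B'$ control in the first stage, ensuring the $L^\infty$-embedding on the whole of $\Omega$ rather than on a preselected subball; the remaining steps are an application of the duality machinery already developed in the paper and in~\cite{CPS}.
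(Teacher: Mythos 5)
Your first stage is correct, and it is a genuinely different (and in one respect cleaner) route to \eqref{E:embedding-to-linfty} than the paper's: the paper obtains the embedding only a posteriori, as a consequence of \eqref{E:psi-in-associate} via \cite{CPS}, and its abstract iteration result (Theorem~\ref{T:home}) needs an a priori embedding into $L^{1,\infty}(\Omega)$, which is not available for $\VV^m X$ on an irregular domain. Your localization to balls, the limit $\|u\|_{L^k(B')}\to\|u\|_{L^\infty(B')}$, and the observation that the constants of equivalence between the norms built on different balls wash out after taking $k$-th roots, together give \eqref{E:embedding-to-linfty} directly from the algebra hypothesis.

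The genuine gap is in your second stage, in the claim that \eqref{E:I-divided-by-power-increasing} ``combined with $I(s)\gtrsim s^{1/n'}$'' yields \eqref{E:2I}. No such lower bound on $I$ is available: $I$ is only a \emph{lower} bound for $I_\Omega$, and \eqref{Igeneral} gives the opposite inequality $I(s)\lesssim s^{1/n'}$. For instance $I(s)=s^{2}$ satisfies every hypothesis of the proposition but violates \eqref{E:2I}. This matters because everything you use afterwards --- the definition of $X_m$ via \eqref{optHm}, the optimal embedding \eqref{E:optimal-Sobolev-embedding}, and the formula \eqref{E:optimal-hardy-embedding} for $\|\cdot\|_{X_m'}$ on which your stage-three computation rests --- is stated only under \eqref{E:2I}. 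The gap is repairable from what you already have: stage one gives $\VV^m X(\Omega_I)\to L^\infty(\Omega_I)$ for the domain $\Omega_I$ of Lemma~\ref{auxdiv}, and the function $u(x)=x_n$ belongs to $\VV^m X(\Omega_I)$ while being unbounded precisely when $L=\int_0^1 ds/I(s)=\infty$; hence \eqref{E:convergence-of-I-omegaI}, and a fortiori \eqref{E:2I}, must hold. With that inserted, stages two and three go through, provided you also note that the optimality of $X_m$ in \eqref{E:optimal-Sobolev-embedding} is optimality over the whole class $\mathcal J_I$, so you should feed it the $L^\infty$-embedding for \emph{all} $\Omega\in\mathcal J_I$ (which stage one supplies), not just for $\Omega_I$. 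By contrast, the paper's proof never needs \eqref{E:2I} a priori: it constructs explicit pairs $u,v$ on $\Omega_I$ from arbitrary $f,g\in\M_+(0,1)$, applies the Leibniz rule and the algebra inequality, and reads off $\|H_I^m f\|_{L^\infty(0,1)}\le C\|f\|_{X(0,1)}$, which is \eqref{E:psi-in-associate} by the definition of the associate norm.
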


\begin{proof}
By Lemma~\ref{L:lemma}, we can assume, without loss of generality,
that $I \in C^1(0, 1)$ and $I ^{n'}$ is convex in $(0, 1)$. Let $L$,
$M$, $\eta$ and $\Omega_I$ be as in Lemma~\ref{auxdiv}. Let
$f,g\in\M_+(0,1)$. We define the functions $u,v : \Omega_I \to
[0,\infty]$ by
\begin{equation}\label{E:def-u}
u (x) = \int _{M(x_n)} ^1 \frac{1}{I(r_1)} \int _{r_1} ^1
\frac{1}{I(r_2)} \dots \int _{r_{m-1}}^1  \frac{f(r_m)}{I(r_m)}
dr_m\, dr_{m-1}\, \dots dr_1 \quad \hbox{for $ x \in \Omega_I ,$}
\end{equation}
and
\begin{equation}\label{E:def-v}
v (x) = \int _{M(x_n)} ^1 \frac{1}{I(r_1)} \int _{r_1} ^1
\frac{1}{I(r_2)} \dots \int _{r_{m-1}}^1  \frac{g(r_m)}{I(r_m)}
dr_m\, dr_{m-1}\, \dots dr_1 \quad \hbox{for $ x \in \Omega_I.$}
\end{equation}
Then the functions $u$ and $v$ are $m$ times weakly differentiable
in $\Omega_I$. Since  $u$ is a non-decreasing function of the
variable $x_n$,
\begin{align*}
|\nabla u(x)|
&= \frac {\partial u}{\partial x_n}(x)=
-\frac{M'(x_n)}{I(M(x_n))}
\int _{M(x_n)}\sp 1
\frac{1}{I(r_{2})}\int_{r_2}\sp1  \dots \int _{r_{m-1}}^1 \frac{f(r_m)}{I(r_m)}
dr_m\, dr_{m-1}\, \dots dr_{2}\\
&= \int _{M(x_n)}\sp 1 \frac{1}{I(r_{2})}\int_{r_2}\sp1  \dots \int
_{r_{m-1}}^1 \frac{f(r_m)}{I(r_m)} dr_m\, dr_{m-1}\, \dots dr_{2}
\quad \hbox{for a.e. $ x \in \Omega_I$,}
\end{align*}
where the last equality holds by \eqref{auxdiv5}. Similarly, if $1
\leq k \leq m-1$,
\begin{equation}\label{Enec2}
|\nabla ^k u(x)| =\frac {\partial\sp k u}{\partial x_n\sp k}(x)=
\int _{M(x_n)}\sp 1 \frac1{I(r_{k+1})}\int _{r_{k+1}}^1 \dots \int
_{r_{m-1}}^1 \frac{f(r_m)}{I(r_m)} dr_m\, dr_{m-1}\, \dots dr_{k+1}
\quad \hbox{for a.e. $ x \in \Omega_I$,}
\end{equation}
and
\begin{equation}\label{E:psurt}
|\nabla ^m u(x)| =\frac {\partial\sp m u}{\partial x_n\sp
m}(x)=f(M(x_n)) \quad \hbox{for a.e. $ x \in \Omega_I$.}
\end{equation}
Thus,  if $0 \leq k \leq m$, then
\begin{equation}\label{E:relation-between-nabla-u-and-h-f}
|\nabla ^k u(x)|=\frac {\partial\sp k u}{\partial x_n\sp
k}(x)=H_I\sp{m-k}f(M(x_n)) \quad \hbox{for a.e. $x\in\Omega_I$,}
\end{equation}
where, as agreed, $H_I\sp{0}f=f$. Analogously,  if $0 \leq k \leq
m$,
\begin{equation}\label{E:relation-between-nabla-v-and-h-g}
|\nabla ^k v(x)|=\frac {\partial\sp k v}{\partial x_n\sp
k}(x)=H_I\sp{m-k}g(M(x_n)) \quad  \hbox{for a.e. $x\in\Omega_I$.}
\end{equation}
By the Leibniz Rule,
\begin{align*}
|\nabla\sp m(uv)(x)|
&=
\frac{\partial\sp m(uv)}{\partial x\sp m_n}(x)
=
\sum_{k=0}\sp m\binom{m}{k}\frac{\partial\sp ku}{\partial x\sp k_n}(x)\frac{\partial\sp {m-k}v}{\partial x\sp{m-k}_n}(x)\\
&= \sum_{k=0}\sp m\binom{m}{k}H\sp{m-k}_If(M(x_n)) H_I\sp
kg(M(x_n)) \quad  \hbox{for a.e. $x\in\Omega_I$.}
\end{align*}
Since $\Omega_I\in \mathcal J_I$,  the space $\mathcal V\sp
mX(\Omega_I)$ is a~Banach algebra by our assumption.
Therefore, in particular, there exists a~positive constant $C$ such
that
\[
\|\nabla\sp m(uv)\|_{X(\Omega_I)}\leq
C\|u\|_{\VV\sp{m}X(\Omega_I)}\|v\|_{\VV\sp{m}X(\Omega_I)},
\]
namely,
\begin{equation}\label{E:pesss}
\left\|\sum_{k=0}\sp m\binom{m}{k}H_I\sp{m-k}f(M(x_n)) H_I\sp
kg(M(x_n))\right\|_{X(\Omega_I)} \leq
C\|u\|_{\VV\sp{m}X(\Omega_I)}\|v\|_{\VV\sp{m}X(\Omega_I)}.
\end{equation}
Now, if $0 \leq k \leq m$, then
$$H_I\sp{m-k}f(M(x_n)) H_I\sp kg(M(x_n))\geq 0 \quad  \hbox{for a.e. $x\in\Omega_I$.}$$
  Therefore, we can disregard the terms with $k\neq0$
in~\eqref{E:pesss}, and obtain
\begin{equation}\label{E:extremal}
\|H_I\sp{m}f(M(x_n))\, g(M(x_n))\|_{X(\Omega_I)}\leq
C\|u\|_{\VV\sp{m}X(\Omega_I)}\|v\|_{\VV\sp{m}X(\Omega_I)}.
\end{equation}
By Lemma~\ref{R:remark}, the function $F_{m, I} : \Omega_I \to [0,
\infty)$, defined as
$$F_{m, I} (x) = H_I\sp{m}f(M(x_n)) g(M(x_n)) \qquad \hbox{for $x
\in \Omega _I$,}$$ is such that
\[
F_{m, I}\sp* =\left(H\sp{m}_If g\right)\sp*.
\]
Hence,
\begin{equation}\label{*}
\|H_I\sp{m}f(M(x_n)) g(M(x_n))\|_{X(\Omega_I)} = \|H_I\sp{m}f
g\|_{X(0,1)}.
\end{equation}
As for the terms on the right-hand side, note that,
by~\eqref{E:norm-in-vdot}, \eqref{E:psurt} and Lemma~\ref{R:remark},
\begin{equation}\label{E:hux}
\|u\|_{\VV\sp{m}X(\Omega)}=\|f\|_{X(0,1)}+\sum_{k=0}\sp{m-1}\|\nabla\sp
ku\|_{L\sp1(B)},
\end{equation}
where $B$ is any ball in $\Omega_I$. It is readily verified from
\eqref{auxdiv7} that there exists a constant $c>0$ such that
$M(x_n)\geq c$  for every $x\in B$.
Thus,
\[
\frac 1{I(r)}\leq \frac 1{I(c)}\quad \textup{if}\ x\in B\
\textup{and}\ r\in[M(x_n),1].
\]
Hence, by~\eqref{E:def-u} and~\eqref{Enec2}, if $0 \leq k\leq m-1$,
\begin{align*}
|\nabla ^k u(x)|
&\leq
\frac 1{I(c)\sp{m-k}}
\int _{M(x_n)}\sp 1
\int _{r_{k+1}}^1 \dots \int _{r_{m-1}}^1 f(r_m)
dr_m\, dr_{m-1}\, \dots dr_{k+1}
\leq
\frac {C_1}{I(c)\sp{m-k}}
\int _{M(x_n)}\sp 1f(r_m)dr_m\\
&\leq C_2\int_0\sp1f(r)\,dr \qquad \hbox{for a.e. $x\in B$},
\end{align*}
for suitable positive constants $C_1$ and $C_2$. Consequently,
by~\eqref{l1linf}, if $0 \leq k\leq m-1$
\[
\|\nabla ^k u\|_{L\sp1(B)}
\leq C_2|B|\|f\|_{L\sp1(0,1)}
\leq C_3\|f\|_{X(0,1)}
\]
 for a~suitable  constant $C_3$. Hence, via~\eqref{E:hux},
\begin{equation}\label{**}
\|u\|_{\VV\sp{m}X(\Omega)}\leq C\|f\|_{X(0,1)}
\end{equation}
for some constant $C$. Analogously,
\begin{equation}\label{***}
\|v\|_{\VV\sp{m}X(\Omega)}\leq C\|g\|_{X(0,1)}.
\end{equation}
\relax From \eqref{E:extremal}, \eqref{*}, \eqref{**} and \eqref{***} we
infer that
\begin{equation}\label{E:alto}
\|gH_I\sp{m}f\|_{X(0,1)}\leq C\|f\|_{X(0,1)}\|g\|_{X(0,1)}
\end{equation}
for some positive constant $C$.
\\
We now claim that
\begin{equation}\label{****}
\|H\sp{m}_If\|_{L\sp{\infty}(0,1)}\leq\sup_{g\geq 0,\|g\|_{X(0,1)}\leq1}\|g H\sp{m}_If\|_{X(0,1)}.
\end{equation}
Indeed, if $\lambda<\|H_I\sp{m}f\|_{L\sp{\infty}(0,1)}$, then there
exists a~set $E\subset(0,1)$ of positive measure  such that $H\sp
m_If\geq\lambda \chi _E$. Thus,
\[
\sup_{g\geq 0, \|g\|_{X(0,1)}\leq1}\|gH\sp{m}_If\|_{X(0,1)}\geq
\lambda\sup_{g\geq 0,
\|g\|_{X(0,1)}\leq1}\|g\chi_E\|_{X(0,1)}=\lambda\,,
\]
whence \eqref{****} follows. Coupling inequality \eqref{****}
with~\eqref{E:alto} tells us that
\[
\|H\sp{m}_If\|_{L\sp{\infty}(0,1)}\leq C \|f\|_{X(0,1)}
\]
for every $f \in X(0,1)$. The last inequality can be rewritten in
the form
\[
\int _{0} ^1 \frac{f(r)}{I(r)}\left(\int_0\sp
r\frac{dt}{I(t)}\right)\sp{m-1}\,dr \leq C \|f\|_{X(0,1)},
\]
and hence, by \eqref{E:assoc},
\[ \left\|\frac{1}{I(t)}\left(\int_0\sp
t\frac{ds}{I(s)}\right)\sp{m-1}\right\|_{X'(0,1)}\leq C,
\]
namely~\eqref{E:psi-in-associate}.

In particular, we have established~\eqref{E:convergence-of-I-omegaI}, and hence
also~\eqref{E:convergence-of-I-omega}. Therefore, by Remark \ref{VVW},
 the three spaces $V\sp mX(\Omega)$, $\VV\sp mX(\Omega)$ and $W\sp
mX(\Omega)$ coincide. Moreover, thanks to
\eqref{E:psi-in-associate}, we may apply
~\cite[Corollary~5.5]{CPS} and obtain thereby that $V\sp
mX(\Omega)\to L\sp{\infty}(\Omega)$ for every $\Omega\in \mathcal
J_I$. This establishes~\eqref{E:embedding-to-linfty}. Since $V\sp mX(\Omega)=\VV\sp mX(\Omega)$, the proof is
complete.
\end{proof}

The following theorem shows that the embedding into the space of
essentially bounded functions is necessary  for a Banach space to be
a~Banach algebra,  in a quite general framework.

\begin{theorem}\label{T:home}
Assume that $Z(\Omega)$ is a~Banach algebra such that
$Z(\Omega)\subset\mathcal M(\Omega)$ and
\begin{equation}\label{E:embedding-to-weak}
Z(\Omega ) \to L\sp{1,\infty}(\Omega).
\end{equation}
Then
\begin{equation}\label{E:embedding-to-l-infty}
Z(\Omega)\to L\sp{\infty}(\Omega).
\end{equation}
\end{theorem}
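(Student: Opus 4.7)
\smallskip

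\noindent\textbf{Proof plan.} The idea is the classical ``power trick'' used to promote a weak integrability bound to an $L^\infty$ bound via the Banach algebra property. Fix $u\in Z(\Omega)$. Because $Z(\Omega)$ is a Banach algebra, there is a constant $C_0$ such that $\|fg\|_{Z(\Omega)}\le C_0\|f\|_{Z(\Omega)}\|g\|_{Z(\Omega)}$ for all $f,g\in Z(\Omega)$. Iterating this inequality, I would first establish by induction on $k\in\N$ that
\begin{equation*}
\|u^k\|_{Z(\Omega)}\le C_0^{k-1}\|u\|_{Z(\Omega)}^k.
\end{equation*}
Combining this with the continuous embedding \eqref{E:embedding-to-weak}, which gives a constant $C_1$ with $\|f\|_{L^{1,\infty}(\Omega)}\le C_1\|f\|_{Z(\Omega)}$ for all $f\in Z(\Omega)$, I would obtain
\begin{equation*}
\|u^k\|_{L^{1,\infty}(\Omega)}\le C_1 C_0^{k-1}\|u\|_{Z(\Omega)}^k
\quad\text{for every }k\in\N.
\end{equation*}

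Next I would rewrite the left-hand side in terms of the decreasing rearrangement. Since $s\mapsto s^k$ is nondecreasing on $[0,\infty)$, we have $(u^k)^*(t)=(u^*(t))^k$ for every $t\in(0,1)$, hence
\begin{equation*}
\|u^k\|_{L^{1,\infty}(\Omega)}
=\sup_{t\in(0,1)}t\,(u^k)^*(t)
=\sup_{t\in(0,1)}t\,(u^*(t))^k.
\end{equation*}
In particular, for every fixed $t\in(0,1)$,
\begin{equation*}
t\,(u^*(t))^k\le C_1 C_0^{k-1}\|u\|_{Z(\Omega)}^k,
\end{equation*}
which rearranges to
\begin{equation*}
u^*(t)\le \bigl(C_1 C_0^{k-1}\bigr)^{1/k}\,t^{-1/k}\,\|u\|_{Z(\Omega)}.
\end{equation*}

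The final step is to let $k\to\infty$ at fixed $t\in(0,1)$. Since $(C_1 C_0^{k-1})^{1/k}\to C_0$ and $t^{-1/k}\to 1$, I obtain
\begin{equation*}
u^*(t)\le C_0\,\|u\|_{Z(\Omega)}\quad\text{for every }t\in(0,1).
\end{equation*}
Taking the supremum over $t\in(0,1)$ and using $\|u\|_{L^\infty(\Omega)}=\sup_{t\in(0,1)}u^*(t)$ yields
$\|u\|_{L^\infty(\Omega)}\le C_0\|u\|_{Z(\Omega)}$, which is precisely \eqref{E:embedding-to-l-infty}.

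I do not see any serious obstacle: the algebra property controls all powers at once, the weak-$L^1$ embedding extracts a pointwise bound on $u^*$ depending on $k$, and the trivial limits $C_0^{(k-1)/k}\to C_0$, $t^{-1/k}\to 1$ kill the $k$-dependence. The only point requiring a word of care is the identity $(u^k)^*(t)=(u^*(t))^k$, which relies on the monotonicity of $s\mapsto s^k$ on $[0,\infty)$ applied to the nonnegative function $|u|$; this is standard.
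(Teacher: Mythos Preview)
Your proof is correct and follows the same core idea as the paper's proof: iterate the Banach algebra inequality to control $\|u^k\|_{Z(\Omega)}$ by $C_0^{k-1}\|u\|_{Z(\Omega)}^k$, feed this into the weak-$L^1$ embedding, and send $k\to\infty$. The only cosmetic difference is that the paper argues by contradiction via level sets (choosing $\lambda=(2C\|w\|_{Z(\Omega)})^k$ and deriving $2^k|E|\le C'$), whereas you take $k$-th roots directly on the rearrangement inequality; your version has the minor bonus of producing the explicit embedding constant $C_0$.
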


\begin{proof}
Since $Z(\Omega)$ is a~Banach algebra,  there exists
a~constant $C\geq 1$ such that
\begin{equation}\label{E:product-in-a}
\|uv\|_{Z(\Omega)}\leq C\|u\|_{Z(\Omega)}\|v\|_{Z(\Omega)}
\end{equation}
for every $u, v \in Z(\Omega)$. Suppose, by contradiction,
that~\eqref{E:embedding-to-l-infty} fails. Then there exists
a~function $w\in Z(\Omega)$ such that
\[
\|w\|_{L\sp{\infty}(\Omega)}>2C\|w\|_{Z(\Omega)},
\]
where $C$ is the constant from~\eqref{E:product-in-a}. In other words, the set
\[
E=\{x\in\Omega;\, |w(x)|>2C\|w\|_{Z(\Omega)}\}
\]
has positive Lebesgue measure. Fix $j\in\N$.
Applying~\eqref{E:product-in-a} $(j-1)$-times, we obtain
\[
\|w\sp j\|_{Z(\Omega)}\leq C\sp{j-1}\|w\|\sp j_{Z(\Omega)}.
\]
Combining this inequality with~\eqref{E:embedding-to-weak}  yields
\[
\lambda|\{x\in\Omega : |w(x)|^j>\lambda\}|\leq C'C\sp{j-1}\|w\|\sp
j_{Z(\Omega)}.
\]
for some constant $C'$, and for every $\lambda>0$. In particular,
the choice $\lambda=(2C\|w\|_{Z(\Omega)})\sp j$ yields
\[
(2C\|w\|_{Z(\Omega)})\sp j|E|\leq C' C\sp{j-1}\|w\|\sp
j_{Z(\Omega)},
\]
namely
\[
2\sp j|E|\leq CC\sp{-1}.
\]
However, this is impossible, since $|E|>0$ and  $j$ is arbitrary.
\end{proof}

Given a multi-index  $\gamma = (\gamma _1, \dots , \gamma _n)$,  with
$\gamma _i \in \mathbb N \cup \{0\}$ for $i=1, \dots , n$, set
$|\gamma|= \gamma _1 + \cdots + \gamma _n$, and $D^\gamma u =
\frac{\partial ^{|\gamma |}u}{\partial x_1^{\gamma _1} \dots
\partial x_n^{\gamma _n}}$ for $u : \Omega \to \mathbb R$. Moreover,
given two multi-indices $\gamma$ and $\delta$, we write $\gamma \leq
\delta$ to denote that $\gamma _i \leq \delta _i$ for $i=1, \dots ,
n$. Accordingly, by $\gamma < \delta$ we mean that $\gamma \leq \delta$ and $\gamma_i < \delta_i$ for at least one $i\in\{1,\dots,n\}$.
\par

\begin{proof}[Proof of Theorem \ref{T:algebra-restricted}]
Let $1 \leq k \leq m$. Fix any multi-index $\gamma$ satisfying
$|\gamma|\leq m-k$. Assumption~\eqref{E:power-m+k} implies that
\begin{equation}\label{E:convergent}
\int_0^1 \frac{\,ds}{I(s)}<\infty.
\end{equation}
\relax From~\eqref{E:power-m+k} and~\eqref{E:convergent}, we obtain that there exists a~positive constant $C$ such that
\begin{equation}\label{E:verifying-assumption}
\left(\int_0^t \frac{\,ds}{I(s)}\right)^{2m-|\gamma|} \leq
\left(\int_0^1 \frac{\,ds}{I(s)}\right)^{m-k-|\gamma|}
\left(\int_0^t \frac{\,ds}{I(s)}\right)^{m+k} \leq
C\varphi_{X}(t).
\end{equation}
Suppose that $\delta$ is any multi-index fulfilling $\delta \leq
\gamma$. Since~\eqref{E:power-m+k} holds, Lemma~\ref{T:thm2},
implication \textup{(vii)} $\Rightarrow$ \textup{(iii)}, combined
with~\eqref{E:optimal-Sobolev-embedding}, tells us that
\begin{equation}\label{E:embedding}
V^{m-|\delta|}X(\Omega) \to
X^{\frac{2m-|\gamma|}{m-|\gamma|+|\delta|}}(\Omega) \quad \hbox{and}
\quad V^{m-|\gamma|+|\delta|}(\Omega) \to
X^{\frac{2m-|\gamma|}{m-|\delta|}}(\Omega).
\end{equation}
On applying~\eqref{E:generalized-hoelder} with
$p=\frac{2m-|\gamma|}{m-|\gamma|+|\delta|}$ and the two embeddings
in~\eqref{E:embedding}, we deduce that
\begin{align}\label{E:product}
\|D^\delta u D^{\gamma-\delta} v\|_{X(\Omega)}
&\leq \|D^\delta u\|_{X^{\frac{2m-|\gamma|}{m-|\gamma|+|\delta|}}(\Omega)} \|D^{\gamma-\delta}v\|_{X^\frac{2m-|\gamma|}{m-|\delta|}(\Omega)}\\
&\leq C\|D^\delta u\|_{V^{m-|\delta|}X(\Omega)}
\|D^{\gamma-\delta}v\|_{V^{m-|\gamma|+|\delta|}X(\Omega)} \leq
C'\|u\|_{V^mX(\Omega)} \|v\|_{V^mX(\Omega)}, \nonumber
\end{align}
for some constants $C$ and $C'$, and for every $u, v\in
V^mX(\Omega)$. In particular, inequality \eqref{E:product} implies
that $\sum_{\delta \leq \gamma} |D^\delta u D^{\gamma-\delta} v| \in
L^1(\Omega)$. Hence, via \cite[Ex. 3.17]{AFP}, we   deduce that the
function $uv$ is $(m-k)$-times weakly differentiable and
$$
D^\gamma(uv) = \sum_{\delta \leq \gamma} \frac{\gamma!}{\delta!(\gamma-\delta)!} D^\delta u D^{\gamma-\delta}v.
$$
It follows from~\eqref{E:product} that
$$
\|D^\gamma(uv)\|_{X(\Omega)} \leq \sum_{\delta \leq \gamma}
\frac{\gamma!}{\delta!(\gamma-\delta)!} \|D^\delta u
D^{\gamma-\delta} v\|_{X(\Omega)} \leq C \|u\|_{V^mX(\Omega)}
\|v\|_{V^mX(\Omega)},
$$
for some constant $C$.
Thus,
\begin{equation}\label{dic1}
\|uv\|_{W^{m-k}X(\Omega)} = \sum_{|\gamma|\leq m-k}
\|D^\gamma(uv)\|_{X(\Omega)} \leq C \|u\|_{V^mX(\Omega)}
\|v\|_{V^mX(\Omega)}
\end{equation}
for some  constant $C$, and  for every $u, v\in V^mX(\Omega)$.
Since~\eqref{E:convergent} is in force,
$W^{m-k}X(\Omega)=\VV^{m-k}X(\Omega)$ and
$V^mX(\Omega)=\VV^{m}X(\Omega)$, up to equivalent norms. As a
consequence of \eqref{dic1}, inequality \eqref{E:P_bounded} follows.
 \par In order to prove the converse assertion, observe that, by Lemma~\ref{L:lemma}, we can assume with no
loss of generality that $I \in C^1(0, 1)$ and $I ^{n'}$ is convex.
Let  $L$, $M$, $\eta$ and $\Omega_I$ be as in
Lemma~\ref{auxdiv}. Since, by~\eqref{auxdiv1},  $\Omega_I\in
\mathcal J_{I}$, condition ~\eqref{E:P_bounded} is fulfilled. Thus,
there exists a positive constant $C$ such that
\begin{equation}\label{E:nabla-m-k}
\|\nabla^{m-k}(uv)\|_{X(\Omega_I)} \leq C \|u\|_{\VV^mX(\Omega_I)}
\|v\|_{\VV^mX(\Omega_I)}
\end{equation}
 for every $u$, $v\in
\VV^mX(\Omega_I)$. Given $f,g\in \mathcal M_+(0,1)$,  define $u,v :
\Omega_I \to [0, \infty]$ as in~\eqref{E:def-u} and~\eqref{E:def-v},
respectively.
The functions $u$ and $v$ are $m$-times weakly differentiable in
$\Omega_I$. Furthermore,
by~\eqref{E:relation-between-nabla-u-and-h-f}
and~\eqref{E:relation-between-nabla-v-and-h-g},
\begin{align}\label{lower-est}
&|\nabla^{m-k}(uv)(x)|
=\frac{\partial^{m-k}(uv)}{\partial x_n^{m-k}}(x)
=\sum_{\ell=0}^{m-k} {m-k\choose \ell} \frac{\partial^{\ell}u}{\partial x_n^{\ell}}(x)   \frac{\partial^{m-k-\ell}v}{\partial x_n^{m-k-\ell}}(x)\\
&=\sum_{\ell=0}^{m-k} {m-k\choose \ell} H_I^{m-\ell}f(M(x_n))
H_I^{k+\ell}g(M(x_n))\nonumber\\
& \geq H_I^{m}f(M(x_n))   H_I^{k}g(M(x_n)) \quad
\hbox{for a.e. $x \in \Omega_I$.}\nonumber
\end{align}
\relax From~\eqref{E:nabla-m-k}, \eqref{lower-est}, \eqref{**} and
\eqref{***}
we infer that
$$
\|H^m_If \, H^k_Ig\|_{X(0,1)} =\|H_I^{m}f(M(x_n))
H_I^{k}g(M(x_n))\|_{X(\Omega)} \leq C\|f\|_{X(0,1)} \|g\|_{X(0,1)}
$$
for some constant $C$, and for every $f, g \in \mathcal M_+(0,1)$.
Since $1\leq k\leq m\leq m+k-1$, it follows from Lemma~\ref{T:thm2},
implication \textup{(vi)} $\Rightarrow$ \textup{(vii)},
that~\eqref{E:power-m+k} holds.
\end{proof}

\begin{proof}[Proof of Theorem~\ref{C:gradients}]
{ Let $\Omega \in \mathcal J_I$,
$\|\cdot\|_{X(0,1)}$ and $1 \leq k \leq m$ be such that
~\eqref{E:power-m+k} is fulfilled. By
 Theorem~\ref{T:algebra-restricted},
there exists a~positive constant $C$ such that
\begin{equation}\label{E:remark-lower}
\sum_{l=0}\sp{m-k}\| |\nabla\sp lu| |\nabla\sp{m-k-l}v|
\|_{X(\Omega)}\leq C\|u\|_{\VV\sp mX(\Omega)}\|v\|_{\VV\sp
mX(\Omega)}
\end{equation}
for every $u,v\in\VV\sp mX(\Omega)$.}
\\
{In order to prove \eqref{E:corollary-lower}, it suffices to show
that there exists $C>0$ such that
\begin{equation}\label{dic10}
\|D\sp{\gamma}u\,D\sp{\delta}v\|_{X(\Omega)}\leq
C\|u\|_{\VV\sp{m}X(\Omega)}\|v\|_{\VV\sp{m}X(\Omega)}
\end{equation}
for every $u,v\in \VV\sp{m}X(\Omega)$ and every multi-indices
$\gamma$ and $\delta$ satisfying $|\gamma|+|\delta|=m$,
$|\gamma|\geq 1$ and $|\delta|\geq 1$. Fix such $u$, $v$, $\gamma$
and $\delta$. Without loss of generality we may assume that
$|\gamma|\leq |\delta|$. Let $\sigma$ be an arbitrary
 multi-index such that $\sigma\leq \delta$ and
 $|\sigma|=|\gamma|$. Assumption
 \eqref{E:fundamental-estimate-first} ensures that condition \eqref{E:power-m+k}
is fulfilled, with $m$ and $k$ replaced with $m-|\gamma|$ and
$|\gamma|$. Thus, owing to~\eqref{E:remark-lower}, applied  with
$u$, $v$, $m$ and $k$
 replaced by $D^\gamma u$, $D^{\delta_0} v$, $m-|\gamma|$ and $|\gamma|$,
  respectively, (and disregarding the terms with $\ell >0$ on the left-hand side) we obtain that
\begin{align*}
\|D\sp{\gamma}u \, D\sp{\delta}v\|_{X(\Omega)}
&\leq \|D\sp{\gamma}u |\nabla^{m-2|\gamma|} D^{\sigma}v|\|_{X(\Omega)}\\
&\leq C\|D\sp{\gamma}u\|_{\VV^{m-|\gamma|}X(\Omega)} \|D^{\sigma}v\|_{\VV^{m-|\gamma|}X(\Omega)}\\
&\leq C\|u\|_{\VV\sp{m}X(\Omega)}\|v\|_{\VV\sp{m}X(\Omega)},
\end{align*}
whence \eqref{dic10} follows.}
\\ {A proof of the necessity of
condition \eqref{E:fundamental-estimate-first}, under
\eqref{E:corollary-lower} and
\eqref{E:I-divided-by-power-increasing} follows along the same lines
as in the proof of \eqref{E:power-m+k} in Theorem
\ref{T:algebra-restricted}, and will be omitted for brevity.}
\end{proof}

We shall now prove a general sufficient condition for the space
$W\sp{m}X(\Omega)$  to be a~Banach algebra.

\begin{prop}\label{T:sufficiency}
Let $m,n\in\N$, $n\geq 2$, and let $I$ be a positive non-decreasing
function in $(0, 1)$. Assume that $\Omega \in \mathcal J_I$. Let
$\|\cdot\|_{X(0,1)}$ be a~rearrangement invariant~function norm.
 If ~\eqref{E:psi-in-associate}
holds, then the Sobolev space $W\sp{m}X(\Omega)$ is
a~Banach algebra.
\end{prop}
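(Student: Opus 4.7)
The plan is to deduce the Banach algebra property from Theorem~\ref{T:algebra-restricted} and Theorem~\ref{C:gradients}, which between them already handle almost every pointwise Leibniz term, and to dispatch the two ``endpoint'' terms at top order via the Sobolev $L\sp\infty$ embedding. The first step is a setup: by Remark~\ref{conv-int-I}, assumption \eqref{E:psi-in-associate} forces $\int_0\frac{ds}{I(s)}<\infty$, and since $\Omega\in\mathcal J_I$ this yields \eqref{E:convergence-of-I-omega}, whence by \eqref{inclusions2} of Remark~\ref{VVW} the three norms $\|\cdot\|_{W\sp mX(\Omega)}$, $\|\cdot\|_{V\sp mX(\Omega)}$, $\|\cdot\|_{\VV\sp mX(\Omega)}$ are mutually equivalent on $W\sp mX(\Omega)$. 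Moreover, \cite[Corollary~5.5]{CPS} (as used in the proof of Proposition~\ref{T:embedding-to-linfty}) gives $\VV\sp mX(\Omega)\to L\sp\infty(\Omega)$, and Proposition~\ref{R:psi-implies-fundamental} tells us that \eqref{E:fundamental-estimate-first} is also in force.

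Thanks to the norm equivalences, it suffices to control $\|\nabla\sp j(uv)\|_{X(\Omega)}$ for each $j\in\{0,1,\dots,m\}$ by $C\|u\|_{V\sp mX(\Omega)}\|v\|_{V\sp mX(\Omega)}$. For $0\leq j\leq m-1$ I would set $k=m-j\in\{1,\dots,m\}$ and verify hypothesis \eqref{E:power-m+k}: writing $L_0:=\int_0\sp 1\frac{ds}{I(s)}<\infty$ and using \eqref{E:fundamental-estimate-first}, one obtains
\[
\Big(\int_0\sp t\tfrac{ds}{I(s)}\Big)\sp{m+k}\leq L_0\sp k\Big(\int_0\sp t\tfrac{ds}{I(s)}\Big)\sp{m}\leq C\,\varphi_X(t)\quad\text{for }t\in(0,1).
\]
Theorem~\ref{T:algebra-restricted} then supplies $\|\nabla\sp j(uv)\|_{X(\Omega)}\leq\|uv\|_{\VV\sp jX(\Omega)}\leq C\|u\|_{\VV\sp mX(\Omega)}\|v\|_{\VV\sp mX(\Omega)}$.

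For the top-order term $j=m$, the pointwise Leibniz rule gives $|\nabla\sp m(uv)|\leq\sum_{k=0}\sp m\binom{m}{k}|\nabla\sp ku||\nabla\sp{m-k}v|$. The two endpoint terms $k=0$ and $k=m$ are controlled immediately by the $L\sp\infty$ embedding, e.g.
\[
\|u\,\nabla\sp mv\|_{X(\Omega)}\leq\|u\|_{L\sp\infty(\Omega)}\|\nabla\sp mv\|_{X(\Omega)}\leq C\|u\|_{V\sp mX(\Omega)}\|v\|_{V\sp mX(\Omega)},
\]
and symmetrically for $k=m$. The intermediate terms $1\leq k\leq m-1$ are precisely the object of Theorem~\ref{C:gradients}, which is applicable since \eqref{E:fundamental-estimate-first} holds; when $m=1$ this range is empty and no such terms appear. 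Combining all estimates and invoking the norm equivalence from the first paragraph yields $\|uv\|_{W\sp mX(\Omega)}\leq C\|u\|_{W\sp mX(\Omega)}\|v\|_{W\sp mX(\Omega)}$.

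The full strength of \eqref{E:psi-in-associate} beyond the weaker \eqref{E:fundamental-estimate-first} is genuinely used only in the two endpoint Leibniz terms at top order, where an $L\sp\infty$ bound on one of the factors is essential; this is the content of the embedding $\VV\sp mX(\Omega)\to L\sp\infty(\Omega)$, which \eqref{E:psi-in-associate} secures. The main nontrivial reduction to flag is simply the verification above that \eqref{E:power-m+k} follows from \eqref{E:fundamental-estimate-first} together with $\int_0\frac{ds}{I(s)}<\infty$, which unlocks the lower-order derivative bounds via Theorem~\ref{T:algebra-restricted}.
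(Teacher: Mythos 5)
Your proposal is correct and follows essentially the same route as the paper's proof: the $L\sp\infty$ embedding (secured by \eqref{E:psi-in-associate}) handles the endpoint Leibniz terms, Proposition~\ref{R:psi-implies-fundamental} unlocks \eqref{E:power-m+k} and hence Theorem~\ref{T:algebra-restricted} for the lower-order derivatives, and Theorem~\ref{C:gradients} covers the intermediate top-order products, exactly as in the paper (which merely organizes the case split by multi-index pairs rather than by derivative order). The only omission is the justification that $uv$ is indeed $m$-times weakly differentiable with the Leibniz formula valid; the paper deduces this from the $L\sp1$-integrability of the products $D\sp\delta u\,D\sp{\gamma-\delta}v$ (which your estimates already supply) via a standard result on weak derivatives of products.
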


\begin{proof}
It suffices to show that, for each pair of multi-indices $\gamma$
and $\delta$
 such that $|\gamma|\leq m$ and $\delta\leq \gamma$, there exists a~positive constant $C$ such that inequality
\begin{equation}\label{E:desired}
\|D\sp{\delta}u D\sp{\gamma-\delta}v\|_{X(\Omega)} \leq
C\|u\|_{W\sp{m}X(\Omega)}\|v\|_{W\sp{m}X(\Omega)}.
\end{equation}
holds for all $u$, $v\in W\sp{m}X(\Omega)$. Indeed, such inequality implies, in particular,  that $\sum _{\delta \leq
\gamma } |D ^\delta u D ^{\gamma - \delta} v| \in L^1(\Omega )$.
Hence, once again, one can use \cite[Ex. 3.17]{AFP} to deduce that the function $uv$ is $m$-times weakly
differentiable in $\Omega$, and that, for each $\gamma$ with $1 \leq
|\gamma | \leq m$,
\begin{equation}\label{19}
D^\gamma (uv) = \sum _{\delta \leq \gamma } \frac{\gamma !}{\delta !
(\gamma -\delta)! } D^\delta u D^{\gamma-\delta} v.
\end{equation}
In order to prove~\eqref{E:desired}, let us begin by noting that,
by~\eqref{E:psi-in-associate} and~\eqref{E:dual-norm},
\begin{equation}\label{E:previous-(v)}
\int_0\sp1\frac{g(t)}{I(t)}\left(\int_0\sp
t\frac{dr}{I(r)}\right)\sp{m-1}\,dt\leq C\|g\|_{X(0,1)},
\end{equation}
 for some constant $C$ and every function $g\in\M_+(0,1)$.
Since~\eqref{E:previous-(v)} can be rewritten in the form
\[
\left\|\int_t\sp1\frac{g(s)}{I(s)}\left(\int_t\sp s\frac{dr}{I(r)}\right)\sp{m-1}\,ds\right\|_{L\sp{\infty}(0,1)}
\leq C\|g\|_{X(0,1)},
\]
it follows from~\cite[Theorem~5.1]{CPS} that $V\sp{m}X(\Omega)\hra
L\sp{\infty}(\Omega)$. Thus, by~\eqref{inclusions},  $W\sp{m}X(\Omega)\hra
L\sp{\infty}(\Omega)$ as well.
\\
Assume now that $\gamma$ is an arbitrary multi-index such that
$0\leq|\gamma|\leq m$. Then, for every $u,v\in W\sp mX(\Omega)$,
\[
\|u D\sp {\gamma}v\|_{X(\Omega)} \leq
\|u\|_{L\sp{\infty}(\Omega)}\|D\sp{\gamma}v\|_{X(\Omega)} \leq
C \|u\|_{W\sp mX(\Omega)}\|v\|_{W\sp mX(\Omega)},
\]
and, analogously,
\[
\|(D\sp {\gamma}u) v\|_{X(\Omega)}
\leq
C \|u\|_{W\sp mX(\Omega)}\|v\|_{W\sp mX(\Omega)}.
\]
This establishes~\eqref{E:desired} whenever $|\gamma|\leq m$ and
either $\delta=0$ or $\delta=\gamma$.
\par
{Assume now that $|\delta|\geq1$ and
$\delta<\gamma$. It follows from
Proposition~\ref{R:psi-implies-fundamental}
that~\eqref{E:psi-in-associate}
implies~\eqref{E:fundamental-estimate-first}, and hence also
\eqref{E:power-m+k} for every $k\in\N$. Clearly,
\begin{equation}\label{E:grad1}
\|D\sp{\delta} u \, D\sp{\gamma-\delta} v\|_{X(\Omega)} \leq
\|\,|\nabla\sp{|\delta|}u|\,|\nabla\sp{|\gamma-\delta|}v|\,\|_{X(\Omega)}.
\end{equation}
On the other hand, we claim that
\begin{equation}\label{E:grad2}
\|\,|\nabla\sp{|\delta|}u|\,|\nabla\sp{|\gamma-\delta|}v|\,\|_{X(\Omega)}
\leq C
\|u\|_{\VV\sp mX(\Omega)}\|v\|_{\VV\sp mX(\Omega)}
\end{equation}
for some positive $C$ and for every $u,v\in \VV\sp mX(\Omega)$.
Indeed, if $|\gamma|=m$, then the claim is a straightforward
consequence of Theorem~\ref{C:gradients}. If $|\gamma|<m$,  then the
claim follows from inequality \eqref{E:remark-lower}, applied with
the choice $k=m-|\gamma|$. Combining inequalities ~\eqref{E:grad1},
\eqref{E:grad2} and the first embedding in~\eqref{inclusions}
completes the proof.}
\end{proof}

\begin{proof}[Proof of Theorem~\ref{T:algebra-for-W}]
Assume that~\eqref{E:psi-in-associate} is satisfied. Then,  by
Proposition~\ref{T:sufficiency}, the space $W\sp{m}X(\Omega)$ is
a~Banach algebra for all $\Omega\in\mathcal J_{\alpha}$. Moreover,
as we have already observed,  condition~\eqref{E:psi-in-associate}
implies~\eqref{E:convergence-of-I-omega}. Therefore, by
Remark~\ref{VVW}, the spaces $W\sp{m}X(\Omega)$ and
$\VV\sp{m}X(\Omega)$ coincide. Consequently, $\VV\sp{m}X(\Omega)$ is
a~Banach algebra.
\\
The second part of the theorem is a straightforward consequence of
Proposition~\ref{T:embedding-to-linfty}.
\end{proof}

\begin{proof}[Proof of Corollary~\ref{C:embedding-to-linfty-and-algebra}]
Assume that the embedding $\VV\sp{m}X(\Omega)\to
L\sp{\infty}(\Omega)$ holds for every $\Omega\in\mathcal J_I$. Let
$\Omega_I$ be the domain defined by~\eqref{omegadiv}. Given
$f\in\M_+(0,1)$,  define $u$ by~\eqref{E:def-u}.
{It follows from~\eqref{E:relation-between-nabla-u-and-h-f} with $k=0$ that $u(x)=H\sp m_If(M(x_n))$ for
 a.e.\ $x\in\Omega_I$ and $M$ defined by~\eqref{auxdiv4}. Therefore,   $\|u\|_{L\sp{\infty}(\Omega)}=\|H\sp m_If\|_{L\sp{\infty}(0,1)}$.
 Furthermore, by~\eqref{**}, we get $\|u\|_{\VV\sp mX(\Omega)}\leq C\|f\|_{X(0,1)}$}
for some constant $C>0$.
Hence,  our
assumptions imply  that there exists a~positive constant $C$ such
that
\begin{equation}\label{5*}
\|H\sp m_If\|_{L\sp{\infty}(0,1)}\leq C\|f\|_{X(0,1)}
\end{equation}
for every nonnegative function $f\in X(0,1)$. Inequality \eqref{5*}
implies~\eqref{E:psi-in-associate}, as was again observed in the
course of proof of Proposition~\ref{T:embedding-to-linfty}. By
Theorem~\ref{T:algebra-for-W}, this tells us that
$\VV\sp{m}X(\Omega)$ is a~Banach algebra for every
$\Omega\in\mathcal J_I$.
\\
The converse implication follows at once from Proposition~\ref{T:embedding-to-linfty}.
\end{proof}

\begin{proof}[Proof of Corollary~\ref{C:linfty}]
Let $\|\cdot\|_{X(0,1)}$ be a~rearrangement-invariant~function norm.
Then, by the assumption and~\eqref{l1linf},
condition~\eqref{E:psi-in-associate} is satisfied. Hence, owing to
Theorem~\ref{T:algebra-for-W}, the space $\VV\sp{m}X(\Omega)$ is
a~Banach algebra.
\end{proof}

\begin{proof}[Proof of Theorem~\ref{T:john}]
Assume first that $\VV\sp{m}X(\Omega)$
is a~Banach algebra. Due to~\eqref{Ijohn},
condition~\eqref{E:convergence-of-I-omega} is satisfied. Hence, by
Remark~\ref{VVW}, the spaces $\VV\sp{m}X(\Omega)$ and
$W\sp{m}X(\Omega)$ coincide. In particular, $W\sp{m}X(\Omega)$ is
a~Banach algebra. Furthermore, by~\eqref{l1linf} and trivial inclusions, we clearly have
\[
W\sp{m}X(\Omega)\to X(\Omega) \to L\sp1(\Omega)\to L\sp{1,\infty}(\Omega),
\]
hence the assumption~\eqref{E:embedding-to-weak} of Theorem~\ref{T:home} is satisfied. Thus, as a~special case of Theorem~\ref{T:home} we
obtain that $W\sp{m}X(\Omega)\to L\sp{\infty}(\Omega)$.
Therefore,  $W\sp{m}X(\Omega)\to L\sp{\infty}(\Omega)$. On the other
hand, by~\cite[Theorem~6.1]{CPS}, this embedding is equivalent to
the inequality
 \[
\int_0\sp1g(s)s\sp{\frac mn-1}\,ds\leq C \|g\|_{X(0,1)}
\]
 for some constant $C$, and for every nonnegative function $g\in X(0,1)$. Hence, via
 the very definition of associate function norm, we
obtain~\eqref{E:john-condition}.

Conversely, assume that~\eqref{E:john-condition} holds. Since
$\Omega$ is a~John domain, inequality~\eqref{isop-ineq} is satisfied
with $I(t)=t\sp{\frac1{n'}}$, $t\in(0,1)$. Hence, it follows from
Theorem~\ref{T:algebra-for-W}  that the space $\VV\sp{m}X(\Omega)$
is a~Banach algebra.
\end{proof}

\begin{proof}[Proof of Proposition~\ref{T:lz}]
If $I(t)=t\sp{\alpha}$, with $\alpha\in[\frac1{n'},\infty)$, then
condition~\eqref{E:I-divided-by-power-increasing} is clearly
satisfied. Hence, by Theorem~\ref{T:algebra-for-W},  the space
$\VV\sp mL\sp{p,q;\beta}(\Omega)$ is a~Banach algebra
for every $\Omega\in\mathcal J_\alpha$ if and only
if~\eqref{E:psi-in-associate} holds. Owing to Remark
\ref{conv-int-I},  condition \eqref{E:psi-in-associate} entails that
$\alpha<1$. Thus,
\[
\frac1{I(t)}\left(\int_0\sp t\frac{ds}{I(s)}\right)\sp{m-1}=(\tfrac
{1}{1-\alpha})\sp{m-1} t\sp{m(1-\alpha)-1}\quad \hbox{for
$t\in(0,1)$.}
\]
Therefore, it only remains to analyze under which conditions the
power function $t\sp{m(1-\alpha)-1}$ belongs to
$(L\sp{p,q;\beta})'(0,1)$. It is easily  verified, via
\eqref{E:lz_assoc}, that this is the case if and only if one of the
conditions in~\eqref{E:lz-algebra} is satisfied.
\end{proof}

\begin{proof}[Proof of Proposition~\ref{T:orlicz}]
By the same argument as in the proof of Proposition~\ref{T:lz} we deduce that for $\VV\sp mL\sp{A}(\Omega)$ to be a~Banach algebra it is necessary that $\alpha<1$.
 If $m \geq \tfrac 1{1-\alpha}$, then \eqref{linfty1}
holds with $I(t)=t\sp{\alpha}$, $t\in (0,1)$, hence, by Corollary
\ref{C:linfty}, $\VV ^m L^A(\Omega)$ is a Banach algebra whatever
$A$ is. If $m < \tfrac 1{1-\alpha}$, then assumption
\eqref{E:psi-in-associate} of Theorem \ref{T:algebra-for-W} is
equivalent to
\begin{equation}\label{orliczproof1}
\|r^{m(1-\alpha )-1}\|_{L^{\widetilde A}(0, 1)} < \infty.
\end{equation}
The latter condition is, in turn, equivalent to
\begin{equation}\label{orliczproof2}
\int _0 \widetilde A \big(r^{m(1-\alpha )-1}\big)\, dr < \infty\,,
\end{equation}
namely to
\begin{equation}\label{orliczproof3}
\int ^\infty \frac{\widetilde A (t)}{t^{1+ \frac{1}{1-
m(1-\alpha)}}}\, dt < \infty.
\end{equation}
By \cite[Lemma 2.3]{cianchioptimal}, equation \eqref{orliczproof3}
is equivalent to
$$\int ^\infty
\bigg(\frac t{A(t)}\bigg)^{\frac {(1-\alpha)m}{1- (1-\alpha )m}}dt <
\infty.$$
\end{proof}

\begin{proof}[Proof of Proposition~\ref{T:lz-reduced}]
Condition  ~\eqref{E:I-divided-by-power-increasing} is satisfied if
$I(t)=t\sp{\alpha}$, with $\alpha\in[\frac1{n'},\infty)$. By
Theorem~\ref{T:algebra-restricted}, condition~\eqref{E:P_bounded}
for $X(\Omega)=L\sp{p,q;\beta}(\Omega)$ holds for every
$\Omega\in\mathcal J_\alpha$ if and only if~\eqref{E:power-m+k} is
in force. In turn, inequality~\eqref{E:power-m+k}
entails~\eqref{E:convergence-of-I-omegaI}, whence $\alpha<1$. Now,
\begin{equation}\label{E:lz_fundamental}
\varphi_{L\sp{p,q;\beta}}(t) \approx
\begin{cases}
t\sp{\frac 1p}(\log\frac 2t)\sp{\beta}&\textup{if}\ 1<p<\infty,\ 1\leq q\leq\infty,\ \beta\in\R;\\
t(\log\frac 2t)\sp{\beta}&\textup{if}\ p=1,\ q=1,\ \beta\geq0;\\
1&\textup{if}\ p=\infty,\ q=\infty,\ \beta=0;\\
(\log\frac 2t)\sp{\beta+\frac 1q}&\textup{if}\ p=\infty,\ 1\leq q<\infty,\ \beta+\frac 1q<0,
\end{cases}
\end{equation}
for $t\in(0,1)$ (see e.g. \cite[Lemma~9.4.1, page~318]{fs-1}).
On the other hand, \begin{equation}\label{int-alpha}\bigg(\int _0^t
s^{-\alpha}\, ds\bigg)^{m+k} = (1-\alpha)^{-m-k}
t\sp{(1-\alpha)(m+k)}\quad \hbox{for $t \in (0,1)$.}
\end{equation}
Thus, inequality~\eqref{E:power-m+k} holds, with
$X(0,1)=L\sp{p,q;\beta}(0,1)$,
 if and only if
one of the conditions in~\eqref{E:lz-reduced} is satisfied.
\end{proof}

\begin{proof}[Proof of Proposition~\ref{T:orlicz-reduced}] As observed in the above proof,
we may assume that $\alpha <1$, and, by Theorem
\ref{T:algebra-restricted}, reduce  \eqref{E:P_bounded}, with
$I(t)=t\sp{\alpha}$, $t\in (0,1)$, and $X(\Omega)=L^A(\Omega)$, to
the validity of \eqref{E:power-m+k}. It is easily seen  that
$$\varphi _{L^A} (t) = \frac 1{A^{-1}(1/t)} \quad \hbox{for $t \in
(0,1)$.}$$
 Thus, the conclusion follows via \eqref{int-alpha}.
\end{proof}

\end{document}